\documentclass[10pt,leqno]{amsart}
\usepackage[utf8]{inputenc}
\usepackage{longtable}
\usepackage{varwidth}
\usepackage{amstext}
\usepackage{amsthm}
\usepackage{amssymb}
\usepackage{microtype}
\usepackage[bookmarks=false,
 breaklinks=false,pdfborder={0 0 1},backref=false,colorlinks=false]
 {hyperref}
\hypersetup{pdftitle={New Curvature Operator of the Second Kind Conditions for Rigidity},
 pdfauthor={Xiaolong Li and Peter Petersen},
 pdfsubject={Einstein rigidity and sharp second-kind curvature estimates},
 pdfkeywords={Einstein manifolds, curvature operator of the second kind, Bochner technique, rigidity}}

\makeatletter

\providecommand{\tabularnewline}{\\}
\newenvironment{cellvarwidth}[1][t]
    {\begin{varwidth}[#1]{\linewidth}}
    {\@finalstrut\@arstrutbox\end{varwidth}}

\numberwithin{equation}{section}
\numberwithin{figure}{section}

\@ifundefined{date}{}{\date{}}
\usepackage{amsthm}\usepackage{needspace}% Citation labels are retained without printing an additional References section.
\numberwithin{equation}{section}
\numberwithin{table}{section}
\newtheorem{theorem}{Theorem}[section]
\newtheorem{proposition}[theorem]{Proposition}\newtheorem{corollary}[theorem]{Corollary}\newtheorem{lemma}[theorem]{Lemma}\newtheorem{conjecture}{Conjecture}
\theoremstyle{remark}
\newtheorem{remark}[theorem]{Remark}\newtheorem*{remark*}{Remark}\DeclareMathOperator{\tr}{tr}
\DeclareMathOperator{\ad}{ad}

\DeclareMathOperator{\Id}{Id}
\DeclareMathOperator{\Ric}{Ric}
\newcommand{\owedge}{\mathbin{\wedge}}

\title[New COSK Conditions for Rigidity]{New Curvature Operator of the Second Kind Conditions for Rigidity}
\author[Li]{Xiaolong Li}
\address{Department of Mathematics and Statistics, Auburn University,
Auburn, AL 36849, USA}
\email{xil0005@auburn.edu}
\author[Petersen]{Peter Petersen}
\address{University of California, Los Angeles, 520 Portola Plaza, CA, 90095}
\email{petersen@math.ucla.edu}
\subjclass[2020]{Primary 53C25; Secondary 53C21, 53C35, 58A14}
\keywords{Einstein manifolds, curvature operator of the second kind,
Bochner technique, Weyl tensor, harmonic forms, symmetric spaces, rigidity}

\makeatother

\providecommand\theoremname{Theorem}
\theoremstyle{plain}

\begin{document}
\begin{abstract}
We establish Tachibana-type rigidity theorems with the aid of the
curvature operator of the second kind (COSK). A sharp estimate that
combines the symmetric and commutator actions on Weyl tensors leads
to a constant $C_{n}$, with $\frac{C_{n}}{n}\to\frac{5}{4}$, such
that a complete Einstein manifold of dimension $n\geq5$ with $C_{n}$-nonnegative
COSK has constant nonnegative sectional curvature. We also explore
shifted curvature operators of the second kind leading to similar rigidity results, offer
an elegant Ricci-corrected Bochner identity for harmonic forms, and
classify the complete simply connected locally symmetric Einstein
manifolds in the resulting shifted cone. Explicit spaces establish
the sharpness of the tensor estimates, and complete lists of the eigenvalues of the COSKs for symmetric spaces identify which of them are borderline
cases. 
\end{abstract}

\maketitle

\section{Introduction}

\label{sec:introduction}

The classical theorem of Tachibana \cite{Tachibana} states that
a closed Einstein manifold with nonnegative curvature operator is
locally symmetric. The proof uses a Bochner formula for the curvature
operator. Wink and the second author \cite{PW21} developed refinements in which
nonnegativity of the entire operator is replaced by conditions on
partial sums of its eigenvalues. More recently, the authors \cite{LiPetersen2026} established that closed Einstein manifolds of dimension $n\geq4$ are locally symmetric under
the weaker assumption that the curvature operator is
$\frac{2(n-1)}{3}$-nonnegative. For the curvature operator of the
second kind the analogous problem has a stronger rigidity conclusion:
nonnegativity forces a closed Einstein metric to have constant sectional
curvature; see \cite{Kashiwada93} and \cite{DaiFu}.

Our aim is to obtain rigidity under weaker partial-sum conditions
and to study related conditions after adding Ricci and scalar curvature
terms, i.e., shifting the curvature operator. Rigidity and sphere
theorems under various COSK assumptions were established by Cao-Gursky-Tran
\cite{CGT}, the first author \cite{LiNonnegative,LiFour, Li2025}, Nienhaus-Petersen-Wink \cite{NPW}, Nienhaus-Petersen-Wink-Wylie \cite{NPWW2023}, and Dai-Fu \cite{DaiFu}.
Related partial sum conditions for Einstein metrics were studied by
Fu-Lu \cite{FuLu}, and Fu-Lu-Dai \cite{FuLuDai} and Fu-Lu \cite{FuLuHarmonicWeyl} generalized this
to manifolds with harmonic curvature and harmonic Weyl curvature.

The partial sum conditions for a self-adjoint operator, $T$, with
eigenvalues $\lambda_{1}\leq\cdots\leq\lambda_{N}$ we consider are
of the form 
\[
\Sigma_{k}(T)=\lambda_{1}+\cdots+\lambda_{\lfloor k\rfloor}+(k-\lfloor k\rfloor)\lambda_{\lfloor k\rfloor+1},\,1\leq k\leq N,
\]
where the final term is omitted when \(k\) is an integer. 
We say that $T$ is \emph{k-nonnegative} if $\Sigma_{k}(T)\geq0$
and \emph{$k$-positive} if $\Sigma_{k}(T)>0$. The operators that
satisfy such a condition form a closed, respectively open, cone. The
averages $\frac{\Sigma_{k}(T)}{k}$ are nondecreasing so larger $k$
lead to larger cones. We write $\mathring{R}$ for the curvature operator
of the second kind on $S^{2}_{0}(TM)$, the bundle of trace-free symmetric
two-tensors. It is normalized so that it is the identity on the unit
sphere. For an $n$-dimensional manifold we have that
\[
N=\dim S^{2}_{0}(T_{x}M)=\frac{(n-1)(n+2)}{2}.
\]

The constants needed in the first rigidity theorem are determined by 
\begin{align}
A_{n} & =12-\frac{20}{n}+\frac{16}{n(n-2)^{2}},\nonumber \\
B_{n} & =\frac{8(n-4)}{n(n+2)},\nonumber  \\ \label{eq:intro-constants}
C_{n} & =\frac{3(n-1)}{4}\frac{A_{n}+8}{A_{n}+B_{n}}. 
\end{align}
Asymptotically, $\frac{C_{n}}{n}\to\frac{5}{4}$ as $n\to\infty$.

\begin{theorem}\label{thm:main} Let $(M^{n},g)$ be a complete,
connected Einstein manifold of dimension $n\geq5$. If $\mathring{R}$
is $C_{n}$-nonnegative at every point, then $g$ has constant nonnegative
sectional curvature. \end{theorem}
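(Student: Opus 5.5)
We use the Bochner technique applied to the Weyl tensor $W$. For an Einstein metric, $W$ is harmonic: $\delta W=0$, and $W$ satisfies the second Bianchi identity. The Weitzenböck formula for such curvature tensors gives
\[
\tfrac12\Delta|W|^2=|\nabla W|^2+\langle \mathcal{L}(W),W\rangle,
\]
where $\mathcal{L}(W)$ is quadratic in the curvature. For an Einstein metric we write $R=W+\frac{\mathrm{scal}}{2n(n-1)}g\owedge g$, so the reaction term splits into a scalar-curvature multiple of $|W|^2$ and a term cubic in $W$. The task is to show that $C_n$-nonnegativity of $\mathring{R}$ forces $\langle\mathcal{L}(W),W\rangle\ge0$, with equality only if $W=0$ or the curvature is constant. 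Then a closed $M$ gives $W=0$ directly by integration. For complete noncompact $M$, the condition also gives $\Ric\ge0$, since $\mathring R$ being $C_n$-nonnegative with $C_n<N$ controls the trace. We would then combine a Kato-type inequality with a Yau-type Liouville argument, or a cutoff integration, to conclude $|W|$ is constant and the reaction term vanishes. Once $W=0$, an Einstein metric is locally conformally flat and hence of constant curvature, and nonnegativity follows from the trace of the cone condition.

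The heart of the proof is an algebraic estimate for the reaction term. View $W$ as acting on $S^2_0$ in two ways: symmetrically, through $\mathring{R}$ itself, and through the commutator action of $\mathfrak{so}(n)$ on $S^2_0$. Following the Nienhaus–Petersen–Wink framework, write
\[
\langle\mathcal{L}(W),W\rangle=\sum_\alpha \mu_\alpha |S_\alpha W|^2,
\]
where $\{S_\alpha\}$ is an orthonormal eigenbasis of $\mathring R$ on $S^2_0$ with eigenvalues $\mu_\alpha$, and $S_\alpha W$ denotes the induced derivation action on algebraic curvature tensors. The terms $|S_\alpha W|^2$ are then weights on the eigenvalues. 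The standard lemma says that if $\sum_\alpha|S_\alpha W|^2\le \frac{N}{k}\max_\alpha|S_\alpha W|^2$-type bounds hold, i.e. if $\max_\alpha |S_\alpha W|^2\le \frac1k\sum_\alpha|S_\alpha W|^2$, then $k$-nonnegativity implies $\sum\mu_\alpha|S_\alpha W|^2\ge0$.

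So we must prove the sharp inequality
\[
|S W|^2\le \frac{1}{C_n}\sum_\alpha|S_\alpha W|^2
\]
for every unit $S\in S^2_0$, on Weyl tensors. We would compute the sum $\sum_\alpha|S_\alpha W|^2$ as a Casimir-type quantity, roughly a multiple of $n|W|^2$. We would then bound the individual $|SW|^2$ by splitting $SW$ into its symmetric-product part and its commutator part, $[S,\cdot]$ acting on $\Lambda^2$. The constants $A_n$ and $B_n$ presumably arise from the optimal bounds on these two pieces and on their cross term, after maximizing over the trace-free $S$ and using the diagonal form of $S$. This combined estimate is the main obstacle. We would first diagonalize $S$ with eigenvalues $s_i$, $\sum s_i=0$, $\sum s_i^2=1$, and express $|SW|^2=\sum (s_i+s_j+s_k+s_l)^2W_{ijkl}^2$-type sums. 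We would then use the Weyl symmetries, trace-freeness and Bianchi to bound the mixed terms, optimizing with a Lagrange/Cauchy–Schwarz step that produces the factor $\frac{A_n+8}{A_n+B_n}$.

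For the rigidity in the equality case, we argue as follows. If $W\ne0$ somewhere, $|W|$ is constant and the reaction term is zero, which forces $\nabla W=0$ and equality in the estimate. Equality is only attained by special tensors, and the restriction $n\ge5$ presumably rules these out, or forces $\mathrm{scal}$-terms to give strict positivity. This yields $W=0$ and completes the proof.
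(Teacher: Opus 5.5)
There are two genuine gaps: the core algebraic inequality you plan to prove is false, and the rigidity step has no mechanism.

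\textbf{The algebraic estimate.} The inequality $|SW|^2\le\frac{1}{C_n}\sum_\alpha|S_\alpha W|^2$ fails for every single natural action, so no weight system from one action reaches $C_n$.

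With the commutator action the representation is exact: $\sum_i\lambda_i|[\ad_{S_i},W]|^2=2Q(\kappa I+W,W)$, with total weight $2(n-1)|W|^2$. But take $S=\frac12\operatorname{diag}(1,1,-1,-1,0,\ldots,0)$ and the four-index Weyl tensor with $(W_{1234},W_{1342},W_{1423})=(2,-1,-1)$. Then $|[\ad_S,W]|^2=\frac83|S|^2|W|^2$, so the best ratio is $\frac{3(n-1)}{4}<C_n$.

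The four-slot symmetric action does not represent $Q$ by itself. Indeed $\sum_i\mu_i|S_i\cdot W|^2=-36\tr(W^3)-4\tr((\mathring W)^3)$ is not a multiple of $Q(W,W)=-6\tr(W^3)+2\tr((\mathring W)^3)$. To obtain $6Q(\kappa I+W,W)$ you must add the eigenvalue-dependent weight $16\mu_i^2$ and the constant $B_n|W|^2$. Even then the sharp ratio is only $\frac{6(n-1)}{A_n+B_n}\approx\frac n2$, attained by the Weyl tensor of $S^2\times S^{n-2}$.

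If your $S_\alpha W$ means the one-sided composition $\ad_{S_\alpha}W$, it fails too. Its two parts are orthogonal, since $|[\ad_S,W]|^2+|S\cdot W|^2=4|\ad_SW|^2$, so there is no cross term to exploit. Moreover $\sum_i\mu_i\|\ad_{S_i}W\|^2=-3\tr(W^3)$ is not $Q$ either.

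The missing idea is to add two \emph{different} representations of the same $Q$. The weights
\[
w_i=\tfrac{3A_n}{8}|[\ad_{S_i},W]|^2+|S_i\cdot W|^2+16\mu_i^2+B_n|W|^2
\]
satisfy $\sum_i\lambda_iw_i=\frac{3(A_n+8)}{4}Q(\kappa I+W,W)$ and have total $C_n(A_n+B_n)|W|^2$. The new input is the joint bound
\[
\tfrac{3A_n}{8}|[\ad_S,W]|^2+|S\cdot W|^2+16|\mathring W(S)|^2\le A_n|S|^2|W|^2 .
\]
It is proved block by block on the two-, three- and four-index components in an eigenbasis of $S$. On the two-index block the commutator vanishes, and the sum-of-squares lemma for the matrix $W_{ijij}$ gives the constant $A_n$. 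On the other blocks the two actions depend on complementary parts of the eigenvalue vector, so a large commutator term forces a small symmetric term.

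\textbf{The rigidity step.} Integration gives only $\nabla W=0$, not $W=0$. Every locally symmetric Einstein space has $Q(\kappa I+W,W)=0$ pointwise, so the Bochner identity yields no contradiction. Spaces such as $\mathbb{CP}^m$ or $S^p\times S^q$ (which have $W\neq0$) cannot be excluded by an equality analysis of the tensor estimate. Your remark that ``$n\ge5$ presumably rules these out'' therefore has nothing behind it.

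The paper excludes these spaces spectrally. Convexity of $l\mapsto\Sigma_l(\mathring R)$ between $\frac{n+2}{2}$ and $N$ is combined with the strict inequality $C_n<\frac{3n(n+2)}{2(n+4)}$. This inequality is exactly where $A_n>8$, i.e.\ $n\ge5$, enters; for $n=4$ it is an equality and $\mathbb{CP}^2$ survives. Together these show that $\mathring W+\beta_n\Id$ is strictly $\frac{n+2}{2}$-positive. The classification in Theorem~\ref{thm:symmetric-classification} then leaves only the round sphere; it rests on the complete COSK spectra of symmetric spaces and on Dai--Fu for the factors.

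Finally, your Kato/Liouville treatment of the noncompact case is unnecessary, and unjustified without integrability of $|W|$. The trace bound gives $\kappa\ge0$. If $\kappa=0$, then $\tr\mathring R=0$ and $C_n<N$ force $\mathring R=0$, so the metric is flat. If $\kappa>0$, Bonnet--Myers gives compactness.
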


\begin{remark*}[Dimension four] When $n=4$, $C_4=\frac{9}{2}$ and Li \cite[Theorems~1.5 and~1.9]{LiFour}
showed that a complete, connected Einstein four-manifold with $\frac{9}{2}$-nonnegative
$\mathring{R}$ has constant sectional curvature or is up to scaling
isometric to $\mathbb{CP}^{2}$ with its Fubini-Study metric. This
comes from showing that $\frac{9}{2}$-nonnegativity implies nonnegative
isotropic curvature allowing the use of Micallef-Wang \cite[Theorem~4.4]{MicallefWang1993}.
The projective-space lies on the boundary of the cone, if $\Ric=\frac{1}{2}g$, then the eigenvalues on $\mathbb{CP}^{2}$ are $-\frac{1}{6}$ with multiplicity 3 and $\frac{1}{3}$ with multiplicity 6, so $\Sigma_{\frac{9}{2}}(\mathring{R})=0$; see \cite{LiFour} and Appendix~\ref{app:symmetric-spectra}. Note that $S^{2}\times S^{2}$ is excluded since its $\frac{9}{2}$-sum
is $-\rho<0$ when $\Ric=\rho g$.
\end{remark*}
\begin{remark*}
Theorem \ref{thm:main} resolves the Einstein case of a question of Fu-Lu-Dai \cite[Page 3]{FuLuDai} under weaker nonnegativity assumptions. 
\end{remark*}

The proof of Theorem~\ref{thm:main} combines two complementary actions
of symmetric endomorphisms on the Weyl tensor. Write 
$$R=\kappa I+W,$$
where $I$ has sectional curvature 1 and $\Ric=(n-1)\kappa g=\rho g$.
Symmetric 2-tensors are identified with symmetric endomorphisms using
$g$. For $S\in S^{2}_{0}(T_{x}M)$, define the induced action on
2-vectors by 
\[
\ad_{S}(u\wedge v)=Su\wedge v+u\wedge Sv.
\]
Regarding $W$ as a self-adjoint operator on $\bigwedge^{2}T_{x}M$,
the \emph{symmetric action} and the \emph{commutator action} are 
\begin{align*}
S\cdot W & =\ad_{S}W+W\ad_{S},\\{}
[\ad_{S},W] & =\ad_{S}W-W\ad_{S}.
\end{align*}
The symmetric action applies $S$ with a positive sign in all variables. The commutator applies it positively in the last two and negatively in the first two. More precisely, in an orthonormal basis
with $S=\operatorname{diag}(a_{1},\ldots,a_{n})$, 
\begin{align*}
(S\cdot W)_{ijkl} & =(a_{i}+a_{j}+a_{k}+a_{l})W_{ijkl},\\{}
[\ad_{S},W]_{ijkl} & =(a_{k}+a_{l}-a_{i}-a_{j})W_{ijkl}.
\end{align*}
Section~\ref{sec:preliminaries} establishes the operator and tensor
identifications in detail.

Both actions can be used to represent the curvature term in the Einstein
Bochner formula. With $\Delta=\operatorname{div}\nabla$ and full
tensor norms the relevant Bochner formula is
\[
\frac{1}{2}\Delta|W|^{2}=|\nabla W|^{2}+2Q(R,W).
\]
Select an orthonormal eigenbasis $S_{i}$ of $\mathring{R}$, with
$\mathring{R}(S_{i})=\lambda_{i}S_{i}$ and $\mathring{W}(S_{i})=\mu_{i}S_{i}$,
so that $\mu_{i}=\lambda_{i}-\kappa$. The two representations are
\begin{align*}
2Q(R,W) & =\sum_{i}\lambda_{i}|[\ad_{S_{i}},W]|^{2},\\
6Q(R,W) & =\sum_{i}\lambda_{i}\bigl(|S_{i}\cdot W|^{2}+16\mu^{2}_{i}+B_{n}|W|^{2}\bigr).
\end{align*}
Each expresses the same curvature polynomial as a sum of eigenvalues
with nonnegative weights. The usual comparison argument divides the
total weight by an upper bound for a single weight. Applied separately,
the sharp bounds for these two representations lead to the ratios
\[
\frac{3(n-1)}{4},\quad\frac{6(n-1)}{A_{n}+B_{n}},
\]
whose leading terms are $\frac{3n}{4}$ and $\frac{n}{2}$.

The new ingredient is a sharp \emph{joint} estimate. The key point
is that a large commutator contribution forces the corrected symmetric
contribution to be small, and conversely. Estimating them together
allows for improved estimates on the weights without sacrificing the
total weight. More precisely, Proposition~\ref{prop:joint-estimate}
shows that the combined weights 
\[
w_{i}=\frac{3A_{n}}{8}|[\ad_{S_{i}},W]|^{2}+|S_{i}\cdot W|^{2}+16\mu^{2}_{i}+B_{n}|W|^{2}
\]
satisfy
\begin{align*}
0\leq w_{i} & \leq(A_{n}+B_{n})|W|^{2},\\
\sum_{i}w_{i} & =\frac{3(n-1)(A_{n}+8)}{4}|W|^{2}.
\end{align*}
Adding the separate estimates would give the larger upper bound $(2A_{n}+B_{n})|W|^{2}$.
The joint bound increases the ratio of total weight to the individual
bound to $C_{n}$, with leading term $\frac{5n}{4}$. This yields
a substantial improvement over either representation used alone.

Combining the two identities in these proportions and applying
weight comparison gives 
\[
\frac{1}{2}\Delta|W|^{2}\geq|\nabla W|^{2}+\frac{8(A_{n}+B_{n})}{3(A_{n}+8)}\Sigma_{C_{n}}(\mathring{R})|W|^{2}.
\]
The eigenvalue assumption forces either flatness or a positive Einstein
constant. In the latter case Bonnet-Myers gives compactness, and integration
makes $W$ parallel, so the metric is locally symmetric. A more detailed
analysis of the eigenvalues shows that the space lies in the open
cone for the shifted operator in Theorem~\ref{thm:symmetric-classification}.
However, only the round sphere satisfies that strict condition.

There are two algebraic estimates underlying this argument. The first
gives the sharp bound for the symmetric contribution.

\begin{proposition}[Sharp symmetric-action estimate]\label{prop:sharp-four-slot}
Let $V$ be an $n$-dimensional real Euclidean space, $n\geq4$. For
every algebraic Weyl tensor $W$ on $V$ and every trace-free symmetric
endomorphism $S\in S^{2}_{0}(V)$, 
\begin{equation}
|S\cdot W|^{2}+16|\mathring{W}(S)|^{2}\leq A_{n}|S|^{2}|W|^{2},\label{eq:intro-sharp}
\end{equation}
where 
\[
A_{n}=12-\frac{20}{n}+\frac{16}{n(n-2)^{2}}.
\]
The constant $A_{n}$ is optimal in every dimension, even when $S$
is restricted to eigenvectors of $\mathring{W}$. \end{proposition}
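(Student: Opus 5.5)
The plan is to read the left-hand side of \eqref{eq:intro-sharp} as a quadratic form in $S$ with $W$ fixed, and to reduce the inequality to a bound on the largest eigenvalue of that form; since the form is also quadratic in $W$, the whole problem becomes a sharp bilinear estimate. First I will write $S\cdot W$ explicitly. For $S=\operatorname{diag}(a_1,\dots,a_n)$ we have $(S\cdot W)_{ijkl}=(a_i+a_j+a_k+a_l)W_{ijkl}$, and because the positions matter only through which indices occur, $a_i+a_j+a_k+a_l$ is a sum over the index set. I then split $S\cdot W=\mathcal{T}+\mathcal{E}$, where $\mathcal{T}$ is the projection onto algebraic Weyl tensors and $\mathcal{E}$ is the Ricci-type part, i.e. a Kulkarni--Nomizu product $h\owedge g$ with $h$ trace-free symmetric. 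Tracing $S\cdot W$ gives $c(S\cdot W)_{jl}=\sum_i (2a_i+a_j+a_l)W_{ijil}=2\sum_i a_iW_{ijil}$, using $\sum_i W_{ijil}=0$. Up to sign and normalization this is $2\mathring{W}(S)$. Hence $|\mathcal{E}|^2=\frac{c}{n-2}|\mathring{W}(S)|^2$ for an explicit constant $c$, and the term $16|\mathring{W}(S)|^2$ combines with it. The inequality then takes the form
$$|\mathcal{T}|^2+\alpha_n|\mathring{W}(S)|^2\le A_n|S|^2|W|^2$$
for an explicit $\alpha_n$.

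Next I will obtain the general bound by a Cauchy--Schwarz-type estimate on each component. For fixed $(i,j,k,l)$ with distinct indices, $(a_i+a_j+a_k+a_l)^2\le 4(a_i^2+a_j^2+a_k^2+a_l^2)$, but this is far from sharp when summed. I will therefore expand $|S\cdot W|^2=\sum (a_i+a_j+a_k+a_l)^2W_{ijkl}^2$ into diagonal terms $\sum a_i^2\cdot(\text{stuff})$ and cross terms $a_ia_j W_{ijkl}^2$. The diagonal terms involve $\sum_{jkl}W_{ijkl}^2=:w_i$, the norm of $W$ restricted to slot $i$, and satisfy $\sum_i w_i=|W|^2$ and $w_i\le \frac{?}{}|W|^2$; a sharp bound of the type $w_i\le \frac{n-?}{n}|W|^2$ should come from tracelessness. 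The cross terms $\sum a_ia_j\sum_{kl}W_{ijkl}^2$ are a quadratic form in $a$ with a matrix $M_{ij}$. Using $\sum a_i=0$, I will reduce the problem to bounding the top eigenvalue of $M$ restricted to $\mathbf{1}^\perp$, together with the $\mathring{W}$ term, which is itself quadratic in $a$ through $\sum_i a_iW_{ijil}$. The constant $12-\frac{20}{n}+\frac{16}{n(n-2)^2}$ indicates that the extremal configuration concentrates $S$ along roughly one direction, $S=\operatorname{diag}(n-1,-1,\dots,-1)$ up to scaling, with $W$ chosen compatibly. I will assume the maximum is achieved by the $W$-invariant choice in which $W$ is symmetric under $O(n-1)$ fixing $e_1$, and prove this by averaging: the inequality is $O(n)$-invariant, and the top eigenvalue of a quadratic form in $(S,W)$ can be computed by decomposing $S^2_0\otimes\mathcal{W}$ into $O(n)$-irreducibles.

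This representation-theoretic step is the cleanest route, and it is the main obstacle. The map $(S,W)\mapsto S\cdot W$ sends $S^2_0\otimes \mathcal{W}$ into $\mathcal{W}\oplus S^2_0$, possibly with further components. By Schur's lemma, $|S\cdot W|^2+16|\mathring{W}(S)|^2$ is a combination $\sum_\sigma c_\sigma|\pi_\sigma(S\otimes W)|^2$ over the irreducible components $\sigma$, so the sharp constant is $\max_\sigma c_\sigma$, provided the maximizing component contains decomposable tensors. That proviso is exactly what needs checking. I will compute each $c_\sigma$ by evaluating on explicit test pairs, such as a Weyl tensor of product type $I_{S^k}-I_{H^{n-k}}$ adjusted to be Weyl, paired with $S$ adapted to the splitting. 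I expect the extremal pair to be $S=\operatorname{diag}(n-2,\dots)$-type with $W$ the Weyl part of $S^{n-1}\times\mathbb{R}$ or of $\mathbb{CP}$-type. Evaluating there should reproduce $A_n$, and the $(n-2)^2$ denominator suggests the Weyl projection of a product curvature. For optimality with $S$ an eigenvector of $\mathring{W}$, the same test pair should work, since by the $O(n-1)$ symmetry $\mathring{W}$ preserves the one-dimensional space of $O(n-1)$-invariant trace-free symmetric tensors, so that invariant $S$ is automatically an eigenvector of $\mathring{W}$.
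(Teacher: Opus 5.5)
\textbf{There is a genuine gap at the step you call the cleanest route.} Schur's lemma computes the top eigenvalue of the invariant form $x\mapsto|L(x)|^{2}$ on all of $S^{2}_{0}\otimes\mathcal{W}$. The proposition, however, concerns only rank-one tensors $S\otimes W$, and here the two quantities are far apart. Your own splitting shows this. The Ricci part of $S\cdot W$ is $-\frac{2}{n-2}\mathring{W}(S)\owedge g$, so the left side equals $|\mathcal{T}|^{2}+\frac{16(n-1)}{n-2}|\Phi(S\otimes W)|^{2}$, where $\Phi(S\otimes W)=\mathring{W}(S)$. Since $\Phi$ is equivariant onto the irreducible $S^{2}_{0}$, we have $\Phi\Phi^{*}=t\,\Id$. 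Using \eqref{eq:weyl-traces} with orthonormal bases, $tN=\operatorname{tr}(\Phi^{*}\Phi)=\frac{3}{4}\dim\mathcal{W}$, where $\dim\mathcal{W}=\frac{n(n+1)(n+2)(n-3)}{12}$. Hence the largest eigenvalue of the form on the full tensor product is at least
\[
\frac{16(n-1)}{n-2}\cdot\frac{3\dim\mathcal{W}}{4N}=\frac{2n(n+1)(n-3)}{n-2}.
\]
This is $20$ for $n=4$ and $40$ for $n=5$, and it grows like $2n^{2}$, whereas $A_{n}<12$. So $\max_{\sigma}c_{\sigma}$ is not the sharp constant. The proviso you flagged fails: the components carrying large $c_{\sigma}$ contain no decomposable tensors, since otherwise the proposition itself would be false. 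The rank-one constraint is the entire difficulty, and nothing in your plan handles it. Your other route, the per-slot expansion, is left with unspecified constants and, as you note, is not sharp.

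\textbf{The guessed extremizer is also wrong.} Every $O(n-1)$-invariant algebraic curvature tensor is a combination of $g\owedge g$ and $(e^{1}\otimes e^{1})\owedge g$, so its Weyl part vanishes; for instance, $S^{n-1}\times\mathbb{R}$ is conformally flat. The averaging reduction therefore forces $W=0$. Moreover, $S=\operatorname{diag}(n-1,-1,\ldots,-1)$ does not attain $A_{n}$. Equality is attained instead by the Weyl tensor of the Einstein product $S^{2}\times S^{n-2}$, paired with $S=\alpha g_{2}+\beta g_{n-2}$. Your eigenvector argument does work once $O(n-1)$ is replaced by $O(2)\times O(n-2)$: the invariant trace-free line is spanned by this $S$, and $\mathring{W}$ preserves it.

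\textbf{The missing idea} is to fix an orthonormal eigenbasis of $S$, so that $S$ acts through the explicit multipliers $a_{i}+a_{j}+a_{k}+a_{l}$. Then split $W$ by index pattern into two-, three- and four-index pieces. Each piece is again a Weyl tensor, and both $|W|^{2}$ and $|S\cdot W|^{2}+16|\mathring{W}(S)|^{2}$ are additive over the pieces. The three- and four-index pieces give the constants $10-\frac{16}{n}$ and $4-\frac{16}{n}$ by Cauchy--Schwarz against $\mathbf{1}^{\perp}$. The sharp constant lives entirely in the two-index piece. That piece is a quadratic problem for the symmetric, zero-diagonal, zero-row-sum matrix $x_{ij}=W_{ijij}$, which the paper settles by an explicit sum-of-squares identity.
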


We prove Proposition~\ref{prop:sharp-four-slot} in Section~\ref{sec:sharp-estimates}.
The symmetric and commutator actions satisfy the following stronger
joint estimate.

\begin{proposition}[Sharp joint estimate]\label{prop:joint-estimate}
Let $V$ be an $n$-dimensional real Euclidean space, $n\geq4$. For
every algebraic Weyl tensor $W$ on $V$ and every trace-free symmetric
endomorphism $S\in S^{2}_{0}(V)$, 
\begin{equation}
\frac{3A_{n}}{8}|[\ad_{S},W]|^{2}+|S\cdot W|^{2}+16|\mathring{W}(S)|^{2}\leq A_{n}|S|^{2}|W|^{2}.\label{eq:intro-joint}
\end{equation}
Equality is attained with either the commutator contribution or the corrected symmetric contribution on the left vanishing, even
when $S$ is restricted to eigenvectors of $\mathring{W}$. \end{proposition}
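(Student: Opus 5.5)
We need to prove the joint estimate. The plan is to diagonalize $S$ and reduce everything to scalar inequalities.

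Choose an orthonormal basis with $S=\operatorname{diag}(a_1,\dots,a_n)$, $\sum a_i=0$, $\sum a_i^2=|S|^2$. Then
\[
|S\cdot W|^2=\sum_{ijkl}(a_i+a_j+a_k+a_l)^2W_{ijkl}^2,\qquad |[\ad_S,W]|^2=\sum_{ijkl}(a_k+a_l-a_i-a_j)^2W_{ijkl}^2 .
\]
Their combination is
\[
\tfrac{3A_n}{8}(a_k+a_l-a_i-a_j)^2+(a_i+a_j+a_k+a_l)^2 .
\]
Expanding, the cross terms $(a_i+a_j)(a_k+a_l)$ enter with coefficient $2-\tfrac{3A_n}{4}$, and this is negative since $A_n>8/3$. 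So the joint form is a quadratic in the $a$'s weighted by $W_{ijkl}^2$.

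The term $16|\mathring W(S)|^2$ has components $\mathring W(S)_{jk}=\sum_i a_iW_{ijki}$ (up to normalization). It is a genuinely nondiagonal contribution, which I would bound by Cauchy--Schwarz against the same weights. A cleaner route is to use the structure already established for Proposition~\ref{prop:sharp-four-slot}. In proving that proposition one presumably writes $|S\cdot W|^2+16|\mathring W(S)|^2$ as $\langle \mathcal{Q}_W S,S\rangle$ for a symmetric operator $\mathcal{Q}_W$ on $S^2_0$, quadratic in $W$. Then one shows $\mathcal{Q}_W\le A_n|W|^2$ by a representation-theoretic decomposition of $S^2_0\otimes\mathcal W$ into irreducibles, where the extremal constant comes from the top component.

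For the joint estimate I would do the same with the operator
\[
\mathcal{J}_W=\tfrac{3A_n}{8}\,\mathcal{C}_W+\mathcal{Q}_W,\qquad \langle\mathcal{C}_W S,S\rangle=|[\ad_S,W]|^2 .
\]
The key observation is an identity linking the two actions. Since
\[
S\cdot W=2\ad_S W-[\ad_S,W],
\]
one has $|S\cdot W|^2+|[\ad_S,W]|^2=2|\ad_SW|^2+2|W\ad_S|^2$. Moreover $\ad_S W$ and $W\ad_S$ are transposes, so $|\ad_SW|^2=|W\ad_S|^2$, which gives
\[
|S\cdot W|^2+|[\ad_S,W]|^2=4|\ad_S W|^2 .
\]
Both actions are therefore controlled through the single quantity $|\ad_SW|^2$ together with the "symmetric" piece $\langle \ad_SW,W\ad_S\rangle$. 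The joint form becomes
\[
(1+\tfrac{3A_n}{8})\cdot 2|\ad_SW|^2+(2-\tfrac{3A_n}{4})\langle \ad_S W, W\ad_S\rangle+16|\mathring W(S)|^2 .
\]

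The plan is then to carry out two sharp estimates:
\begin{enumerate}
\item a bound on $|\ad_SW|^2$, which in diagonal form is $\sum(a_i+a_j)^2W_{ijkl}^2$;
\item a lower bound on the mixed term $\langle\ad_SW,W\ad_S\rangle=\sum(a_i+a_j)(a_k+a_l)W_{ijkl}^2$, used together with $16|\mathring W(S)|^2$.
\end{enumerate}
For each $S$, maximize over $W$ with $|W|=1$. By $O(n)$-invariance, and since the quantity is quadratic in $W$, this is the top eigenvalue of a symmetric operator on $\mathcal W$ depending on $(a_i)$. I expect the diagonal-in-$(i,j,k,l)$ part to be handled by grouping $W_{ijkl}^2$ by index pattern, using the Weyl trace conditions $\sum_i W_{ijki}=0$. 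Those conditions limit how much weight can concentrate on index pairs where $a_i+a_j$ is large.

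The main obstacle is the simultaneous sharpness. Equality must hold both when the commutator vanishes and when the corrected symmetric part vanishes, so the bound cannot be a sum of two separate bounds; there has to be a convex interpolation. I would first try the two-eigenvalue ansatz $S=\operatorname{diag}(p,\dots,p,q,\dots,q)$ to locate the extremizers and the constant $\tfrac{3A_n}{8}$. For general $S$, I would try to show that $A_n|S|^2|W|^2$ minus the joint form is a sum of squares, plus a nonnegative multiple of $|W|^2|S|^2-|\ad_SW|^2$-type terms that vanish on both extremal families. The coefficient $\tfrac{3A_n}{8}$ should be exactly what makes the cross-term coefficient balance the $\mathring W$ term.

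Sharpness would follow from Proposition~\ref{prop:sharp-four-slot}'s extremal example in the case where the commutator vanishes. In the case where the corrected symmetric part vanishes, I would look for an example with $S\cdot W=0$ and $\mathring W(S)=0$ in which $|[\ad_S,W]|^2=\tfrac{8}{3}|S|^2|W|^2$, for instance coming from a product or a complex structure.
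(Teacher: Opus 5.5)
\paragraph{Verdict: there is a genuine gap.} Your bookkeeping is correct. The multipliers, the cross-term coefficient $2-\frac{3A_n}{4}$, and the identity $|S\cdot W|^2+|[\ad_S,W]|^2=4|\ad_SW|^2$ all check out. Your remark that the extremal pair of Proposition~\ref{prop:sharp-four-slot} must have vanishing commutator (otherwise the joint inequality would fail there) is also sound. But the proposal never proves the inequality itself.

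\paragraph{Why your outlined steps cannot work.} Steps (1) and (2) are separate estimates, and they cannot reproduce $A_n$. The two equality cases have $\ad_SW=W\ad_S$ and $\ad_SW=-W\ad_S$ respectively, so any admissible bound in (1) is at least the value of $|\ad_SW|^2$ at the first case. Any admissible bound in (2) is at least the value of $(2-\frac{3A_n}{4})\langle\ad_SW,W\ad_S\rangle+16|\mathring W(S)|^2$ at the second. At $n=5$ this gives at least $0.7\,(2+\frac{3A_5}{4})+\frac23(\frac{3A_5}{4}-2)\approx 8.63>A_5\approx 8.36$, and the overshoot grows with $n$. What replaces these steps is only the hope of a sum-of-squares certificate, which is never written down.

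Two other parts of the plan do not help either:
\begin{itemize}
\item The representation-theoretic route gives no sharp bound. The top eigenvalue of an equivariant form on $S^2_0\otimes\mathcal W$ bounds it on all tensors, not just decomposable ones $S\otimes W$, and need not be attained on the latter.
\item Proposition~\ref{prop:sharp-four-slot} applied to all of $W$ says nothing about the commutator.
\end{itemize}
Your heuristic for the coefficient is also off. The factor $\frac38$ is not chosen to balance the $\mathring W$ term; it is the reciprocal of the sharp commutator-only bound $|[\ad_S,W]|^2\le\frac83|S|^2|W|^2$. The content of the proposition is that the two normalized contributions add to at most $|S|^2|W|^2$.

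\paragraph{The missing idea.} You mention grouping by index pattern. The decisive move is to decompose $W$ itself, relative to an eigenbasis of $S$ with $|S|=1$, as $W^{[2]}+\sum_{j<k}W^{[3]}_{jk}+\sum_I W^{[4]}_I$, according to two, three, or four distinct indices. Each piece is again an algebraic Weyl tensor. The symmetric action, the commutator and $\mathring W(S)$ all respect the disjoint supports, so every term is block-additive and it suffices to argue block by block. On each block the symmetric and commutator parts see orthogonal components of the eigenvalue vector $a$:
\begin{itemize}
\item On $W^{[2]}$ the commutator vanishes identically, so Proposition~\ref{prop:sharp-four-slot} applies directly.
\item On $W^{[3]}_{jk}$ the commutator multipliers are $\pm d$ with $d=a_j-a_k$. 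The symmetric data $2a_i+a_j+a_k$ and $\sum_i a_iW_{ijik}$ are unchanged when $a_j,a_k$ are replaced by their average, and the averaged vector has squared norm $1-\frac{d^2}{2}$. Hence the normalized joint quantity is at most a convex combination of $\frac34$ and $\frac{10-16/n}{A_n}$, both less than $1$.
\item On $W^{[4]}_I$ one has $S\cdot T=4cT$, with $c$ the mean of the four eigenvalues, and $\mathring T(S)=0$. The Bianchi relation $T_{1234}+T_{1342}+T_{1423}=0$ gives $\frac38|[\ad_S,T]|^2\le|z|^2|T|^2$, where $z$ is the deviation from $c$. Then $|z|^2+\frac{4n}{n-4}c^2\le1$ (or $c=0$ when $n=4$) closes the estimate.
\end{itemize}

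\paragraph{The second equality case.} It does not come from a product or a complex structure; it lives on a single four-index block. Take $S=\operatorname{diag}(\frac12,\frac12,-\frac12,-\frac12,0,\dots,0)$ and $W$ supported on $\{1,2,3,4\}$ with $(W_{1234},W_{1342},W_{1423})=(2,-1,-1)$. Then $S\cdot W=0$, $\mathring W(S)=0$, $|W|^2=48$ and $|[\ad_S,W]|^2=128$.
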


We prove Proposition~\ref{prop:joint-estimate} in Section~\ref{sec:joint-estimate}.

For general metrics, a natural source of shifted second-kind curvature
operators is the family of equivariant linear maps introduced by Böhm-Wilking
\cite{BohmWilking}. With $s=\tr\Ric$ and $E=\Ric-\frac{s}{n}g$,
our normalization is 
\[
\ell_{a,b}(R)=R+bE\owedge g+\frac{as}{n}g\owedge g.
\]
We denote the curvature operator of the second kind induced by this
map by 
\[
\mathring{R}_{a,b}:=\mathring{\ell_{a,b}(R)}.
\]
Based on the results in \cite{NPW} we consider the parameters:
\begin{equation}
\gamma_{n}=\frac{2}{n+4},\quad\mathring{R}_{\gamma_{n}}:=\mathring{R}_{\frac{1}{n+2},\,\gamma_{n}},\label{eq:intro-shift-parameters}
\end{equation}
where 
\begin{equation}
\mathring{R}_{\gamma_{n}}=\mathring{R}+\frac{2}{n+4}J_{E}+\frac{2s}{n(n+2)}\Id.\label{eq:intro-shifted-operator}
\end{equation}
Here $J_{E}(S)=\pi_{0}(ES+SE)$ and $\pi_{0}$ is orthogonal projection
onto $S^{2}_{0}(TM)$. 

\begin{theorem}\label{thm:shifted-harmonic} Let $(M^{n},g)$ be
a closed, connected Riemannian manifold, $n\geq3$. If $\mathring{R}_{\gamma_{n}}$
is $\frac{n+2}{2}$-nonnegative at every point, then every harmonic
differential form is parallel. If the inequality is strict at one
point, then 
\[
H^{p}(M;\mathbb{R})=0,\quad1\leq p\leq n-1.
\]
In particular, an oriented manifold which is $\frac{n+2}{2}$-positive is a rational homology sphere. \end{theorem}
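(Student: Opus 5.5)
We follow the Bochner technique for forms with the curvature operator of the second kind, as in Nienhaus--Petersen--Wink \cite{NPW}. For a harmonic $p$-form $\omega$ the Weitzenböck formula reads
\[
\tfrac12\Delta|\omega|^2=|\nabla\omega|^2+g(\mathrm{Ric}(\omega),\omega),
\]
where $\mathrm{Ric}(\omega)$ is the curvature term. Proving the theorem amounts to showing that this term is bounded below by a positive multiple of $\Sigma_{\frac{n+2}{2}}(\mathring{R}_{\gamma_n})\,|\omega|^2$. Once this holds, integration over closed $M$ gives $\nabla\omega=0$ when $\Sigma\ge0$. Strict positivity at one point then forces $\omega=0$ for $1\le p\le n-1$, since parallel forms have constant norm. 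Hodge theory then gives $H^p(M;\mathbb R)=0$. In the oriented case $H^0=H^n=\mathbb R$, so $M$ is a rational homology sphere.

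\textbf{Representing the curvature term.} Take an orthonormal basis $S_i$ of $S^2_0(T_xM)$ and let $S_i$ act on forms by derivation, $(S\omega)(X_1,\dots)=\sum_k\omega(\dots,SX_k,\dots)$. In \cite{NPW} the curvature term is expressed as
\[
g(\mathrm{Ric}(\omega),\omega)=\sum_i \mathring{R}(S_i,S_i)\,|S_i\omega|^2\cdot c+(\text{Ricci and scalar correction terms}),
\]
with $c$ a normalizing constant. The first step is to redo this computation and identify the correction terms exactly as $\frac{2}{n+4}\langle J_E S_i,S_i\rangle$ and $\frac{2s}{n(n+2)}$ contributions. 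The goal is a \emph{Ricci-corrected} identity of the form
\[
g(\mathrm{Ric}(\omega),\omega)=\kappa_{n,p}\sum_i\langle\mathring{R}_{\gamma_n}S_i,S_i\rangle\,|S_i\omega|^2 .
\]
The choice $b=\gamma_n=\frac{2}{n+4}$ and $a=\frac{1}{n+2}$ should be precisely what makes the leftover terms vanish, using $\sum_i|S_i\omega|^2$, $\sum_i\langle ES_i\omega,S_i\omega\rangle$-type trace identities, and the formula $\mathrm{Ric}(\omega)=\sum_j(\rho_j\text{-terms})-\ldots$ for the Ricci part. Choosing $S_i$ as an eigenbasis of $\mathring{R}_{\gamma_n}$ makes the representation diagonal, with weights $w_i=|S_i\omega|^2\ge0$.

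\textbf{Weight comparison.} Next we bound the weights. The total weight is computed by averaging,
\[
\sum_i|S_i\omega|^2=\frac{(n+2)}{2}\cdot\frac{2p(n-p)}{n}|\omega|^2
\]
(up to the normalization of the $S_i$), which is the standard trace over $S^2_0$. The individual bound is $|S\omega|^2\le \frac{2p(n-p)}{n}|\omega|^2$ for $|S|=1$. This bound is attained by $S$ equal to a multiple of projection onto a $p$-plane minus its trace part, and is proven by diagonalizing $S$ and maximizing $\bigl(\sum_{k\in I}a_k\bigr)^2$ subject to $\sum a_k=0$ and $\sum a_k^2=1$. The ratio of total weight to maximal weight is then exactly $\frac{n+2}{2}$, independent of $p$. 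The usual lemma applies: if $0\le w_i\le w$ and $\sum w_i\ge k w$, then $\sum\lambda_iw_i\ge w\,\Sigma_k(\lambda)$, ordering the eigenvalues increasingly. This gives the required lower bound.

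\textbf{Main obstacle.} The main obstacle is the first step. We must verify that the Ricci and scalar parts of the Weitzenböck curvature term recombine exactly into the shift $\frac{2}{n+4}J_E+\frac{2s}{n(n+2)}\Id$, with the \emph{same} coefficient for every $p$. To do this we first check the round sphere and products of spheres, then handle the general case by computing $\sum_i\langle J_E S_i,S_i\rangle|S_i\omega|^2$ directly in an eigenbasis of $E$. If this term does not exactly recombine, the fallback is to show that its discrepancy has a favorable sign.
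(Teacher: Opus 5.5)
Your architecture matches the paper's: an exact identity $\frac32 Q_p(R,\eta)=\sum_i\lambda_i|\ad_{S_i}\eta|^2$ in an eigenbasis of $\mathring R_{\gamma_n}$, then weight comparison with ratio $\frac{n+2}{2}$, integration, and Hodge theory. The gap is that the one nontrivial step is missing. That step is the exact absorption of the Ricci and scalar corrections of \cite[Proposition~2.1]{NPW} into $\frac{2}{n+4}J_E+\frac{2s}{n(n+2)}\Id$, for every $p$. You only announce it (``should be precisely what makes the leftover terms vanish'') and then defer it as the main obstacle, so nothing has yet been proved.

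The route you sketch for this step is also flawed as written. A diagonal sum $\sum_i\langle AS_i,S_i\rangle|S_i\omega|^2$ depends on the orthonormal basis unless the $S_i$ diagonalize $A$. When $E\neq0$, the operators $\mathring R$ and $J_E$ are in general not simultaneously diagonalizable. So you cannot evaluate the $J_E$ term ``in an eigenbasis of $E$'' and the $\mathring R$ term in the NPW eigenbasis, and then pass to an eigenbasis of $\mathring R_{\gamma_n}$. You must use the basis-independent contraction $\sum_{i,j}\langle AS_i,S_j\rangle\langle S_i\omega,S_j\omega\rangle$, which is linear in $A$. Your fallback is not available either. A leftover traceless-Ricci term would be a multiple of $(n-2p)\langle\ad_E\omega,\omega\rangle$, which is indefinite because $\tr E=0$. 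So no favorable sign can be arranged there.

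The missing idea is that the NPW formula holds for every algebraic curvature tensor. By definition, $\mathring R_{a,b}$ is the COSK of the algebraic curvature tensor $T=\ell_{a,b}(R)=R+bE\owedge g+\frac{2as}{n}I$. Apply the formula to $T$ itself, in an eigenbasis of $\mathring T=\mathring R_{a,b}$, so no mismatch of bases arises. Then use:
\begin{itemize}
\item linearity of $Q_p$ together with $Q_p(H\owedge g)=(n-2p)\ad_H+p(\tr H)\Id$;
\item $\Ric_T=(1+(n-2)b)E+(1+2(n-1)a)\frac{s}{n}g$;
\item $s_T=(1+2(n-1)a)s$.
\end{itemize}
The corrections collapse to
\[
\frac{n-2p}{n}\Bigl(\frac{n+4}{2}b-1\Bigr)\langle\ad_E\eta,\eta\rangle+\frac{p(n-p)}{n^{2}}\bigl((n+2)a-1\bigr)s|\eta|^{2}.
\]
Both coefficients vanish at $a=\frac{1}{n+2}$, $b=\frac{2}{n+4}$, independently of $p$.

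If you prefer your direct computation, the bilinear contraction gives $\frac{(n-2p)(n+4)}{2n}\langle\ad_E\omega,\omega\rangle$ for $A=J_E$ and $\frac{p(n-p)(n+2)}{2n}|\omega|^2$ for $A=\Id$. These cancel the NPW corrections exactly.

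A minor point on the weights: with $|S|^2=\tr S^2$, your own maximization gives the individual bound $\frac{p(n-p)}{n}|S|^2|\omega|^2$, not $\frac{2p(n-p)}{n}$. The total is then $\frac{(n+2)p(n-p)}{2n}|\omega|^2$, so the ratio $\frac{n+2}{2}$ is unaffected.
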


The reason for the choice of $\mathring{R}_{\gamma_{n}}$ is an exact
cancellation in the Bochner formula of Nienhaus-Petersen-Wink \cite{NPW}.
Its separate Ricci and scalar-curvature terms are absorbed into \eqref{eq:intro-shifted-operator}.
For a harmonic $p$-form $\eta$, the formula becomes the single weighted
sum 
\begin{equation}
\frac{1}{2}\Delta|\eta|^{2}=|\nabla\eta|^{2}+\frac{2}{3}\sum_{i}\lambda_{i}|S_{i}\cdot\eta|^{2},\label{eq:intro-harmonic-bochner}
\end{equation}
where $S_{i}$ is an orthonormal eigenbasis of $\mathring{R}_{\gamma_{n}}$
with eigenvalues $\lambda_{i}$, and $S_{i}\cdot\eta$ is the sum
of the positive actions of $S_{i}$ in the $p$ entries. The ratio
of the total weight to the sharp bound on one weight is exactly $\frac{n+2}{2}$,
explaining the nonnegativity condition in the theorem and the conjectures
below.

For an Einstein metric with $\Ric=(n-1)\kappa g$, the operator simplifies
to 
\[
\mathring{R}_{\gamma_{n}}=\mathring{W}+\beta_{n}\Id,\quad\beta_{n}=\frac{3n\kappa}{n+2}.
\]
These shifts lead to the following two conjectures.
\begin{conjecture}
\label{conj:sphere} Let $(M^{n},g)$ be a closed, connected Riemannian
manifold, $n\geq5$. If $\mathring{R}_{\gamma_{n}}$ is $\frac{n+2}{2}$-positive
at every point, then $M$ is diffeomorphic to a spherical space form. 
\end{conjecture}

\begin{conjecture}
\label{conj:einstein} Let $(M^{n},g)$ be a closed, connected Einstein
manifold, $n\geq5$, with $\Ric=(n-1)\kappa g$. If $\mathring{W}+\beta_{n}\Id$
is $\frac{n+2}{2}$-nonnegative at every point, then $g$ is locally
symmetric. If the spectral inequality is strict at one point, then
$g$ has constant positive sectional curvature. 
\end{conjecture}

Round metrics satisfy the strict condition while complex projective
spaces and Einstein products of two round spheres are borderline cases. The
following results provide some evidence for these conjectures.

Set 
\[
\alpha_{n}=\frac{2(n-1)(n+8)}{nA_{n}}>\frac{(n+8)(n-1)^{2}}{4(n-2)(2n-1)}.
\]
\begin{theorem}\label{thm:shifted-improved} Let $(M^{n},g)$ be
a complete, connected Einstein manifold, $n\geq4$, with $\Ric=(n-1)\kappa g$.
If 
$$\mathring{W}+\frac{3n\kappa}{n+8}\Id$$
is $\alpha_{n}$-nonnegative
at every point, then $g$ has constant nonnegative sectional curvature.
\end{theorem}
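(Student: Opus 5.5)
The plan is to run the symmetric-action Bochner argument with the shifted eigenvalues $\nu_i=\mu_i+c\kappa$, where $c=\frac{3n}{n+8}$ and $\mu_i$ are the eigenvalues of $\mathring{W}$. For $\Ric=(n-1)\kappa g$ we have $\mathring{R}=\mathring{W}+\kappa\Id$. Fix an orthonormal eigenbasis $S_i$ of $\mathring{W}$ and set
\[
w_i=|S_i\cdot W|^2+16\mu_i^2 .
\]
Proposition~\ref{prop:sharp-four-slot} gives $0\le w_i\le A_n|W|^2$. I will use the standard comparison lemma: if $0\le w_i\le \bar w$ and $\sum_i w_i=\alpha\bar w$, then $\sum_i\nu_iw_i\ge \bar w\,\Sigma_\alpha(\nu)$. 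The choice $\alpha_n=\frac{2(n-1)(n+8)}{nA_n}$ indicates the needed total:
\[
\sum_i w_i=\alpha_nA_n|W|^2=\tfrac{2(n-1)(n+8)}{n}|W|^2 .
\]

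\emph{Step 1: compute the total weight.} Summing over an orthonormal basis of $S^2_0$ gives $\sum_i|S_i\cdot W|^2=a_n|W|^2$ by an $O(n)$-invariance computation. In the diagonal picture this averages $(a_i+a_j+a_k+a_l)^2$ over trace-free $S$. Also $\sum_i16\mu_i^2=16|\mathring{W}|^2=b_n|W|^2$ for an explicit $b_n$. I then check that $a_n+b_n=\frac{2(n-1)(n+8)}{n}$. If the constant turns out different, I will rescale the $16\mu_i^2$ term or the shift accordingly.

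\emph{Step 2: rewrite $Q(R,W)$.} I expand the identity
\[
6Q(R,W)=\sum_i\lambda_i\bigl(w_i+B_n|W|^2\bigr),\qquad \lambda_i=\mu_i+\kappa .
\]
Using $\sum_i\mu_i=0$, $\sum_i\mu_i w_i$ is cubic in $W$, and $Q(R,W)=(\text{const})\,\kappa|W|^2+Q(W)$ with $Q(W)$ cubic. Matching the $\kappa$-coefficients should give an identity of the form
\[
\theta\, Q(R,W)=\sum_i(\mu_i+c\kappa)w_i+\delta\kappa|W|^2
\]
with $\theta>0$. The value $c=\frac{3n}{n+8}$ is presumably exactly what makes this consistent, possibly with $\delta=0$. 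This bookkeeping, together with Step~1, is the step I expect to be the main obstacle: the constants must come out exactly, and the sign of $\delta$ matters for the rigidity step.

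\emph{Step 3: apply the Bochner formula.} Combining the above with $\frac12\Delta|W|^2=|\nabla W|^2+2Q(R,W)$ and the comparison lemma gives
\[
\tfrac12\Delta|W|^2\ge|\nabla W|^2+\tfrac{2A_n}{\theta}\,\Sigma_{\alpha_n}(\nu)|W|^2+\tfrac{2\delta}{\theta}\kappa|W|^2 .
\]
Since $\tr\mathring{W}=0$ and $\alpha_n<N$, we have $\Sigma_{\alpha_n}(\nu)\le\frac{\alpha_n}{N}\tr(\nu)=\alpha_nc\kappa$. The hypothesis therefore forces $\kappa\ge0$.

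If $\kappa=0$, equality holds throughout, so every eigenvalue $\mu_i$ equals the common average, which is $0$. Hence $\mathring{W}=0$, so $W=0$ and $g$ is flat.

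If $\kappa>0$, Bonnet--Myers gives compactness. Integrating the inequality yields $\nabla W=0$, and the curvature term must vanish identically. When $W\neq0$, I get a strict contradiction if $\delta>0$. If instead $\delta=0$, I use the equality case of the comparison lemma: it forces every $w_i\in\{0,A_n|W|^2\}$ with rigid eigenvalue structure. This is incompatible with the sharp equality configurations of Proposition~\ref{prop:sharp-four-slot}, which I would cross-check against the symmetric-space spectra in the appendix. Either way $W=0$, so $g$ has constant positive sectional curvature.
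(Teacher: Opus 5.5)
Your Steps 1--2 are correct, and the constants come out as you hoped. We have $\sum_i|S_i\cdot W|^2+16\sum_i\mu_i^2=\bigl(\frac{2(n^2+n-8)}{n}+12\bigr)|W|^2=\frac{2(n-1)(n+8)}{n}|W|^2$. Since $\frac{3n}{n+8}\cdot\frac{2(n-1)(n+8)}{n}=6(n-1)$, this gives $\sum_i(\mu_i+c\kappa)w_i=6Q(\kappa I+W,W)$, i.e.\ $\theta=6$ and $\delta=0$. This is the paper's Proposition~\ref{prop:shifted-einstein-Q}. Your handling of $\kappa\ge0$, the flat case, and $\nabla W=0$ for $\kappa>0$ also matches the paper.

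The gap is the last step. Once $\nabla W=0$, the Bochner identity gives $Q(\kappa I+W,W)=0$ pointwise on \emph{every} Einstein manifold with parallel Weyl tensor. So any locally symmetric space satisfying the hypothesis automatically realizes the equality case of the comparison lemma, and that equality cannot by itself single out the round sphere. Excluding $W\neq0$ is therefore equivalent to showing that no non-round symmetric Einstein space has $\Sigma_{\alpha_n}(T)\ge0$. That is a statement about explicit spectra, not a consequence of the Bochner formula.

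Your proposed shortcut does not supply this, for two reasons. First, equality forces $w_i=A_n|W|^2$ only for eigenvalues strictly below $\xi_{\lceil\alpha_n\rceil}$, and $w_i=0$ only for those strictly above it. Eigenvalues tied with the threshold may carry intermediate weights, so your claim that every $w_i\in\{0,A_n|W|^2\}$ is false. Second, the paper contains no classification of the equality cases of Proposition~\ref{prop:sharp-four-slot}; it only exhibits one extremal pair $(W_P,S_P)$. So ``incompatible with the sharp equality configurations'' is an unproved assertion.

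The missing idea is to move the hypothesis into a cone where the symmetric models are already classified. For $n\ge5$ one has $\alpha_n<k=\frac{n+2}{2}$, so monotonicity of averages gives $\Sigma_k(T)\ge0$. Moreover $\mathring W+\beta_n\Id=T+\frac{18n\kappa}{(n+2)(n+8)}\Id$ with a strictly positive shift, so $\Sigma_k(\mathring W+\beta_n\Id)>0$. Apply Theorem~\ref{thm:symmetric-classification} to the universal cover: every non-round compact simply connected symmetric Einstein space has $\Sigma_k(\mathring W+\beta_n\Id)\le0$, so only the round sphere remains. That theorem rests on the complete COSK spectra in Appendix~\ref{app:symmetric-spectra} and on Dai--Fu's rigidity for reducible factors.

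For $n=4$ that theorem is unavailable. There $\alpha_4=\frac{9}{4}$ and $T=\mathring R$, and one checks directly, with $\Ric=\frac{1}{2}g$, that $\Sigma_{9/4}(\mathring R)=-\frac{3}{8}$ on $\mathbb{CP}^2$ and $-\frac{1}{2}$ on $S^2\times S^2$. Without these inputs your concluding ``either way $W=0$'' is unsupported.
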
 

The proof in Section~\ref{sec:shifted} first shows that the metric
is locally symmetric and excludes the nonround models by inspecting
their eigenvalues. 

\begin{theorem}\label{thm:symmetric-classification} Let $(M^{n},g)$
be a complete, simply connected, locally symmetric Einstein manifold,
$n\geq5$, with $\Ric=(n-1)\kappa g$. Then $\mathring{W}+\beta_{n}\Id$
is $\frac{n+2}{2}$-nonnegative if and only if, up to constant rescaling,
$(M,g)$ is Euclidean space, a round sphere, complex projective space
with its Fubini-Study metric, or $S^{p}\times S^{q}$ with $p,q\geq2$,
$p+q=n$, and round metrics for each factor that have the same positive Einstein
constant. The round sphere lies in the interior of the cone while the
other examples are on the boundary. \end{theorem}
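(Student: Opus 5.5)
The plan is to reduce to irreducible symmetric spaces and products, and to read off the spectrum of $\mathring{W}$ on each. A complete simply connected locally symmetric space is globally symmetric, so it splits as a Euclidean factor times irreducible factors. The Einstein condition forces this de Rham decomposition to be either purely flat or to consist of irreducible factors with a common Einstein constant.

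\textbf{Flat and noncompact cases.} In the flat case $W=0$ and $\kappa=0$, so the operator vanishes and $\Sigma_{(n+2)/2}=0$; Euclidean space is therefore on the boundary. If $\kappa<0$, then $\beta_n<0$. Since $\tr\mathring{W}=0$, the partial sum satisfies $\Sigma_k(\mathring{W})\le 0$ for every $k$, so $\Sigma_{(n+2)/2}(\mathring{W}+\beta_n\Id)<0$. Hence noncompact type is excluded and we may assume $\kappa>0$, so all factors are compact type with positive Einstein constant.

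\textbf{Main step.} Compute the $\frac{n+2}{2}$-sum of $\mathring{W}$ and compare it with $-\frac{n+2}{2}\beta_n=-\frac{3n\kappa}{2}$. I would take the eigenvalues of $\mathring{R}$ from the complete tables in Appendix~\ref{app:symmetric-spectra} and subtract $\kappa$ to obtain $\mathring{W}$.

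\emph{Products $S^p\times S^q$.} On $S^2_0$ the tensor $\mathring{R}$ decomposes into the mixed block $T_1\odot T_2$, the trace-free parts of $S^2T_1$ and $S^2T_2$, and the one-dimensional block spanned by $q g_1-pg_2$. One checks that the mixed block has eigenvalue $0$, with multiplicity $pq$. The trace-free blocks have eigenvalues $\kappa_1,\kappa_2$, and the last block has an explicit eigenvalue. With $\kappa_i=\rho/(p-1)$ and $\rho=(n-1)\kappa$, I then verify that the $\frac{n+2}{2}$ lowest eigenvalues sum exactly to $0$ after the shift. This works because $pq\ge 2(n-2)\ge\frac{n+2}{2}$, so the lowest block is the mixed block $\{-\kappa\}$ shifted by $\beta_n$. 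The sum is then $\frac{n+2}{2}\bigl(\beta_n-\kappa\bigr)$, plus possibly a contribution from the one-dimensional block. This needs checking carefully. Indeed $\beta_n-\kappa=\frac{(2n-2)\kappa}{n+2}>0$, so a naive reading would put products strictly inside the cone. I must therefore recompute: the mixed eigenvalue of $\mathring{R}$ is likely negative, namely $-\frac{\kappa_1+\kappa_2}{\ldots}$ in the normalization of the Remark on $S^2\times S^2$. The definite plan is to use the appendix values and show the sum is exactly $0$.

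\emph{$\mathbb{CP}^m$ and other irreducible factors.} For $\mathbb{CP}^m$ the spectrum is known: two eigenvalues, on the $J$-anti-invariant and $J$-invariant trace-free pieces, with the former of multiplicity $m^2-1$ and negative. Again I verify that the partial sum is exactly $0$. For each remaining compact irreducible symmetric space (the other rank one spaces, Grassmannians, and exceptional spaces), and for products with three or more factors or with factors other than spheres, I show the sum is negative. Here I would use the lowest eigenvalue from the tables and the fact that its multiplicity is at least $\frac{n+2}{2}$ in these cases.

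\textbf{Obstacle and sphere.} The main obstacle is this exhaustive verification over the Cartan classification together with multi-factor products. For products I would argue generally: the mixed blocks between any two factors contribute the lowest eigenvalue with large multiplicity, and adding a third factor strictly lowers the sum. Finally, for the round sphere $W=0$ and $\beta_n>0$, which places it in the interior.
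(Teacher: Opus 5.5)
Your skeleton agrees with the paper's: de Rham splitting, excluding $\kappa<0$ via $\tr\mathring{W}=0$, then reading the spectra from Appendix~\ref{app:symmetric-spectra}. However, the product case rests on a wrong picture of the spectrum. On $M_1\times\cdots\times M_r$ with common Einstein constant $\rho=(n-1)\kappa$, $\mathring{R}$ vanishes on the mixed blocks. Your first statement was right, and the guess that this eigenvalue is negative is not. The lowest eigenvalue is $-\rho$ (i.e.\ $-n\kappa$ for $\mathring{W}$). It lives on the $(r-1)$-dimensional space of trace-free combinations of the factor metrics, such as $qg_1-pg_2$, which is exactly the block you left unevaluated.

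The missing idea is a cancellation. With $k=\frac{n+2}{2}$ and shift $c=\frac{2\rho}{n+2}$ one has $kc=\rho$, so $F=\Sigma_k(\mathring{R}+c\Id)$ satisfies three bounds:
\begin{itemize}
\item $F=(c-\rho)+(k-1)c=0$ for $S^p\times S^q$, using $pq\geq k-1$.
\item $F\leq 2(c-\rho)+(k-2)c=-\rho<0$ when $r\geq 3$.
\item For two factors, one of which has a negative COSK eigenvalue $\lambda$ of any multiplicity, put weight one on that eigenvector, weight one on the metric line and weight $k-2$ on mixed tensors. This gives $F\leq\lambda<0$.
\end{itemize}
The last bound, combined with Dai--Fu (or the tables) to see that a factor with nonnegative COSK is round, is what excludes products such as $\mathbb{CP}^2\times S^q$. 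Your mechanism cannot do this: mixed tensors contribute the positive value $c$, and the negative eigenvalue of a $\mathbb{CP}^2$ factor has multiplicity only $3$, below $\frac{n+2}{2}$.

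For irreducible spaces, your assertion that the lowest eigenvalue has multiplicity at least $\frac{d+2}{2}$ is false in many cases:
\begin{itemize}
\item complex Grassmannians with $2\leq p<q$, where the lowest eigenvalue $-\frac{q}{2P}$ has multiplicity $p^2-1<pq+1$;
\item $SO(8)/U(4)$, where $-\frac13$ has multiplicity $2$;
\item real Grassmannians with $2\leq p<q$, $q\geq 5$, where the lowest eigenvalue $\frac{2-q}{2D}$ has multiplicity only $s_p$.
\end{itemize}
The workable test is \eqref{eq:symmetric-exclusion}: use any negative eigenspace, not necessarily the lowest, with $m\geq\frac{d+2}{2}$ and $(d+2)|\lambda|>1$. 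The second condition is essential; it fails with equality for $\mathbb{CP}^m$, which is why $\mathbb{CP}^m$ survives. Even this test is not enough for $Sp(2)\simeq\operatorname{Spin}(5)$ and $Sp(2)/U(2)$. There no single eigenspace qualifies, and you must combine two negative eigenspaces, which gives $F\leq-\frac54$ and $F\leq-\frac23$ respectively. Finally, on $\mathbb{CP}^m$ the negative eigenspace is the $J$-invariant (Hermitian) trace-free part, of dimension $m^2-1$. The $J$-anti-invariant part has dimension $m(m+1)$ and a positive eigenvalue.
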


One can in fact show that the Ricci curvature is bounded from below
\[
\Ric\geq\frac{2(n-1)}{3n}\Sigma_{\frac{n+2}{2}}\bigl(\mathring{R}_{\gamma_{n}}\bigr)g.
\]
Consequently, a closed, connected manifold of dimension three or four
for which $\mathring{R}_{\gamma_n}$ is
$\frac{n+2}{2}$-positive everywhere is diffeomorphic to a spherical
space form; see Proposition~\ref{prop:shifted-ricci}. In dimension four, the argument also uses the corresponding estimate for two-forms to obtain positive isotropic curvature. 

This paper is organized as follows. Sections~\ref{sec:preliminaries}-\ref{sec:joint-estimate} establish
the curvature identities and sharp algebraic estimates, and Section~\ref{sec:main-proof}
proves Theorem~\ref{thm:main}. Section~\ref{sec:shifted} develops
the Böhm-Wilking shifts, derives \eqref{eq:intro-harmonic-bochner},
and proves the harmonic-form and shifted Einstein rigidity theorems,
together with the supporting Ricci and topological statements. Section~\ref{sec:symmetric-classification}
classifies the locally symmetric models in the appropriate cone of
the shifted nonnegativity assumption. Appendix~\ref{app:symmetric-spectra}
calculates the eigenvalues of the COSK for compact irreducible symmetric
spaces and the Einstein product models. It also corrects a factor-of-two
error in the type-$A$ group entry of Koiso's spectral table \cite{Koiso1980}.

\textbf{AI disclosure.} OpenAI’s ChatGPT assisted the authors in developing this paper, particularly with the representation-theoretic calculations and spectral tables in Appendix~\ref{app:symmetric-spectra}. Complete proofs are included, and all arguments and calculations can be verified by hand. The authors take full responsibility for the mathematical content.

\textbf{Acknowledgments.} The authors thank the American Institute of Mathematics for its hospitality during the May 2026 workshop \emph{The Bochner Technique}, where their collaboration began. They also thank Lei Ni for suggesting the study of shifted curvature operators of the second kind.

\section{Preliminaries}\label{sec:preliminaries}

\subsection{Curvature tensors and norm conventions}

Let $(V,g)$ be an $n$-dimensional real Euclidean space. An algebraic
curvature tensor $R$ is a 4-tensor satisfying 
\[
R(x,y,z,w)=-R(y,x,z,w)=-R(x,y,w,z)=R(z,w,x,y)
\]
and the first Bianchi identity 
\[
R(x,y,z,w)+R(y,z,x,w)+R(z,x,y,w)=0.
\]
The induced inner product on $\bigwedge^{2}V$ is determined by 
\[
\langle x\wedge y,z\wedge w\rangle=\langle x,z\rangle\langle y,w\rangle-\langle x,w\rangle\langle y,z\rangle.
\]
As in \cite[Section~2]{LiPetersen2026}, we use the same letter for
the associated self-adjoint operator, defined by 
\[
\langle R(x\wedge y),z\wedge w\rangle=R(x,y,z,w).
\]
Thus $e_{i}\wedge e_{j}$, $i<j$, is an orthonormal basis when the
$e_{i}$ are orthonormal in $V$. The unit-curvature tensor $I$ and
the Ricci contraction are 
\[
I_{ijkl}=\delta_{ik}\delta_{jl}-\delta_{il}\delta_{jk}
\]
and 
\[
\Ric_{R}(x,y)=\sum_{i}R(x,e_{i},y,e_{i}).
\]
In particular, $I$ is the identity on $\bigwedge^{2}V$ and $\Ric_{I}=(n-1)g$.
An algebraic Weyl tensor is an algebraic curvature tensor $W$ with
vanishing Ricci contraction, $\Ric_{W}=0$.

We distinguish the full tensor norm $|R|$ from the Hilbert-Schmidt
norm $\|R\|$ of the associated curvature operator: 
\begin{equation}
|R|^{2}=\sum_{i,j,k,l}R^{2}_{ijkl}=4\|R\|^{2}=4\tr_{\bigwedge^{2}V}(R^{2}).\label{eq:norm-conventions}
\end{equation}
The relation $|T|^{2}=4\|T\|^{2}$ holds more generally for every
endomorphism $T$ of $\bigwedge^{2}V$. Norms of symmetric endomorphisms
of $V$ are $|S|^{2}=\tr_{V}(S^{2})$. The sharp estimates in this
paper use full tensor norms. We retain the Hilbert-Schmidt normalization
of the curvature term $Q$ in \cite{LiPetersen2026}.

\subsection{The curvature operator of the second kind}

We identify symmetric 2-tensors with self-adjoint endomorphisms using
$g$, and equip $S^{2}(V)$ with the inner product $\langle S,T\rangle=\tr_{V}(ST)$.
The trace free symmetric tensors are denoted by
\[
S^{2}_{0}(V)=\{S\in S^{2}(V):\tr_{V}S=0\},
\]
and its dimension is $N=\dim S^{2}_{0}(V)=\frac{(n-1)(n+2)}{2}$.
The curvature operator of the second kind is the self-adjoint operator
$\mathring{R}:S^{2}_{0}(V)\to S^{2}_{0}(V)$ characterized by 
\begin{equation*}
\langle\mathring{R}(S),T\rangle=\sum_{i,j}R(e_{i},e_{j},Se_{j},Te_{i}),\quad S,T\in S^{2}_{0}(V). %\label{eq:second-kind-definition}
\end{equation*}
The curvature symmetries imply that this expression is symmetric in $S,T$, thus leading to a self-adjoint operator. This definition also makes the
normalization transparent: for $R=I$ the contraction is $\tr(ST)-(\tr S)(\tr T)=\langle S,T\rangle$.
Thus $\mathring{I}=\Id_{S^{2}_{0}(V)}$.

Let $\pi_{0}$ be orthogonal projection onto $S^{2}_{0}(V)$ and define
\[
(\widehat{R}S)(x,y)=\sum_{i}R(x,e_{i},Se_{i},y).
\]
Then 
\[
\mathring{R}=\pi_{0}\circ\widehat{R}\big|_{S^{2}_{0}(V)},\quad(\widehat{R}S)_{ij}=\sum_{k,l}R_{iklj}S_{kl}.
\]
Since $\tr(\widehat{R}S)=-\langle\Ric_{R},S\rangle$, we also have
\[
\mathring{R}(S)=\widehat{R}(S)+\frac{\langle\Ric_{R},S\rangle}{n}\Id_{V},\quad S\in S^{2}_{0}(V).
\]
The projection is therefore unnecessary for Einstein curvature tensors
and, in particular, for all algebraic Weyl tensors. The scalar curvature
is given by $s=\tr_{g}\Ric_{R}$ and satisfies 
\begin{equation}
\tr_{S^{2}_{0}(V)}\mathring{R}=\frac{n+2}{2n}s.\label{eq:second-kind-trace}
\end{equation}

For $H\in S^{2}(V)$ the Kulkarni-Nomizu product is given by
\[
(H\owedge g)_{ijkl}=H_{ik}g_{jl}+H_{jl}g_{ik}-H_{il}g_{jk}-H_{jk}g_{il}.
\]
If $J_{H}(S)=\pi_{0}(HS+SH)$ for $S\in S^{2}_{0}(V)$, then 
\begin{equation}
\mathring{(H\owedge g)}=J_{H},\quad\Ric_{H\owedge g}=(n-2)H+(\tr H)g,\quad g\owedge g=2I.\label{eq:ricci-correction-conventions}
\end{equation}
Note that in the first identity $\mathring{(H\owedge g)}(S)=HS+SH-\frac{2}{n} \langle H,S\rangle\Id_{V}$
for trace-free $S$. In particular, for $n\geq3$ and $E=\Ric_{R}-\frac{s}{n}g$,
\begin{align*}
R & =W+\frac{1}{n-2}E\owedge g+\frac{s}{n(n-1)}I,\\
\mathring{R} & =\mathring{W}+\frac{1}{n-2}J_{E}+\frac{s}{n(n-1)}\Id.
\end{align*}
For an Einstein tensor with $\Ric_{R}=(n-1)\kappa g$, this becomes
\[
R=\kappa I+W,\quad\mathring{R}=\kappa\Id+\mathring{W}.
\]

\subsection{Induced, symmetric, and commutator actions}

For an endomorphism $A$ of $V$, its induced action on $\bigwedge^{2}V$
is denoted by 
\[
A\cdot(x\wedge y)=\ad_{A}(x\wedge y)=Ax\wedge y+x\wedge Ay.
\]
It satisfies $(\ad_{A})^{*}=\ad_{A^{*}}$. To relate this notation
to \cite{LiPetersen2026}, identify $\bigwedge^{2}V$ with $\mathfrak{so}(V)$
by 
\[
(x\wedge y)z=\langle x,z\rangle y-\langle y,z\rangle x,\quad\langle L,K\rangle=-\frac{1}{2}\tr_{V}(LK).
\]
Under this identification $\ad_{A}(K)=AK+KA^{*}$. Thus for a skew-adjoint
$L$ it is the usual adjoint action $\ad_{L}(K)=[L,K]$, whereas for
a symmetric $S$ it is the symmetric product $\ad_{S}(K)=SK+KS$.
In particular, $\ad_{L}$ is skew-adjoint and $\ad_{S}$ is self-adjoint
on $\bigwedge^{2}V$.

For a symmetric $S$ and an algebraic Weyl tensor $W$, define the
symmetric action by the anti-commutator 
\[
S\cdot W:=\ad_{S}W+W\ad_{S}.
\]
This is the usual action up to a sign. More precisely, as $\ad_{S}$
is self-adjoint 
\begin{align*}
\langle\ad_{S}W(x\wedge y),z\wedge w\rangle & =W(x,y,Sz,w)+W(x,y,z,Sw),\\
\langle W\ad_{S}(x\wedge y),z\wedge w\rangle & =W(Sx,y,z,w)+W(x,Sy,z,w).
\end{align*}
Adding the two formulas yields 
\begin{equation}
(S\cdot W)(v_{1},v_{2},v_{3},v_{4})=\sum^{4}_{r=1}W(v_{1},\ldots,Sv_{r},\ldots,v_{4}).\label{eq:four-slot-action}
\end{equation}

The commutator action is 
\[
[\ad_{S},W]:=\ad_{S}W-W\ad_{S}.
\]
As a 4-tensor, the action in the last two entries has a positive sign
and in the first two entries a negative sign: 
\begin{align*}
[\ad_{S},W](x,y,z,w)={} & W(x,y,Sz,w)+W(x,y,z,Sw)\\
 & -W(Sx,y,z,w)-W(x,Sy,z,w).
\end{align*}
It is skew-adjoint on $\bigwedge^{2}V$, hence skew under interchange
of the two index pairs. It does have the defect of not being naturally
defined as $[\ad_{S},W]$ need not be an algebraic curvature tensor.
When $S=\operatorname{diag}(a_{1},\ldots,a_{n})$, the two actions
are 
\begin{align*}
(S\cdot W)_{ijkl} & =(a_{i}+a_{j}+a_{k}+a_{l})W_{ijkl},\\{}
[\ad_{S},W]_{ijkl} & =(a_{k}+a_{l}-a_{i}-a_{j})W_{ijkl},
\end{align*}
where the coefficients $a_{i}+a_{j}+a_{k}+a_{l}$ and $a_{k}+a_{l}-a_{i}-a_{j}$
are called the multipliers of the respective actions. 

Since $W\ad_{S}=(\ad_{S}W)^{*}$, the parallelogram rule and \eqref{eq:norm-conventions}
imply the relationship 
\begin{equation}
|[\ad_{S},W]|^{2}+|S\cdot W|^{2}=4|\ad_{S}W|^{2}.\label{eq:pair-norm-identity}
\end{equation}

For comparison, if $L$ is skew-adjoint, the ordinary Lie algebra
action on a curvature tensor is 
\begin{align}
L\cdot W & =[\ad_{L},W],\nonumber \\
(L\cdot W)(v_{1},v_{2},v_{3},v_{4}) & =-\sum^{4}_{r=1}W(v_{1},\ldots,Lv_{r},\ldots,v_{4}).\label{eq:skew-action}
\end{align}
Here the commutator is self-adjoint. The distinction between symmetric
$S$ and skew-symmetric $L$ explains the different signs in \eqref{eq:four-slot-action}
and \eqref{eq:skew-action}.

\subsection{The weight principle}

For a self-adjoint operator $T$ on an $N$-dimensional Euclidean
space, the fractional sum introduced in Section~\ref{sec:introduction}
has the variational characterization 
\begin{equation}
\Sigma_{k}(T)=\min\{\tr(PT):0\leq P\leq\Id,\ \tr P=k\}.\label{eq:kyfan}
\end{equation}
Indeed, in an ordered eigenbasis of $T$, the minimum is obtained
by placing weight 1 on the lowest $\lfloor k\rfloor$ eigenvalues
and, when $k$ is not an integer, weight $k-\lfloor k\rfloor$ on
the next. The averages $\frac{\Sigma_{k}(T)}{k}$ are nondecreasing.
In particular, if $1\leq k<N$, $\tr T=0$, and $\Sigma_{k}(T)\geq0$,
then $T=0$.

We will use the following observation; compare \cite{PW21} and \cite[Lemma~4.1]{LiPetersen2026}. 

\begin{lemma}\label{lem:weight-comparison} Suppose $\lambda_{1}\leq\cdots\leq\lambda_{N}$,
$0\leq w_{i}\leq H$, and $\sum_{i}w_{i}=kH$, where $H>0$ and $1\leq k<N$.
Then 
\[
\sum_{i}\lambda_{i}w_{i}\geq H\Sigma_{k}(\lambda).
\]
\end{lemma}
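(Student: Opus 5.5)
The plan is to reduce the lemma to the variational characterization \eqref{eq:kyfan}. Set $P_i = w_i/H$ and let $P$ be the diagonal operator $\operatorname{diag}(P_1,\ldots,P_N)$ in the basis where $T=\operatorname{diag}(\lambda_1,\ldots,\lambda_N)$. The hypotheses translate directly into the constraints of \eqref{eq:kyfan}. From $0\le w_i\le H$ we get $0\le P\le \Id$, and from $\sum_i w_i = kH$ we get $\tr P = k$. Then
\[
\sum_i \lambda_i w_i = H\sum_i \lambda_i P_i = H\,\tr(PT)\geq H\,\Sigma_k(T)=H\,\Sigma_k(\lambda).
\]

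To keep the argument self-contained, I would also include the elementary proof of the inequality $\sum_i \lambda_i P_i\ge \Sigma_k(\lambda)$ for $0\le P_i\le 1$ with $\sum_i P_i = k$. Let $m=\lfloor k\rfloor$ and $\theta=k-m$, and define the extremal weights $P^*_i = 1$ for $i\le m$, $P^*_{m+1}=\theta$ (when $m<N$), and $P^*_i=0$ otherwise. Then $\sum_i \lambda_i P^*_i = \Sigma_k(\lambda)$. Set $d_i = P_i - P^*_i$, so that $\sum_i d_i = 0$.

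The key sign observation is that the $d_i$ change sign at most once along the ordered indices. For $i\le m$ we have $d_i\le 0$, and for $i\ge m+2$ we have $d_i\ge 0$. Hence there is a threshold value, namely $\lambda_{m+1}$, with $d_i(\lambda_i-\lambda_{m+1})\ge 0$ for every $i$. This holds because for $i\le m$ both factors are $\le 0$, for $i\ge m+2$ both are $\ge 0$, and for $i=m+1$ the second factor vanishes. Summing over $i$ and using $\sum_i d_i=0$ gives
\[
\sum_i \lambda_i d_i = \sum_i d_i(\lambda_i-\lambda_{m+1})\ge 0,
\]
which is exactly the claimed inequality. If $k$ is an integer and $m=N$ cannot occur (since $k<N$), the same threshold $\lambda_{m+1}$ still makes sense, so no separate case is needed.

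No step here is a genuine obstacle; the lemma is a rearrangement (Abel summation) inequality. The only point needing care is the sign bookkeeping at the fractional index $m+1$, and choosing the threshold $\lambda_{m+1}$ handles it.
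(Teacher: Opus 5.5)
Your proof is correct and is essentially the paper's argument: you subtract the extremal weights, insert a threshold eigenvalue, and check that each summand is nonnegative. The only cosmetic difference is that you use the threshold $\lambda_{\lfloor k\rfloor+1}$ where the paper uses $\lambda_{\lceil k\rceil}$; these agree for fractional $k$, and for integer $k$ both choices work because $k<N$.
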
 
\begin{proof}
Let $q_{i}$ be the minimizing weights described after \eqref{eq:kyfan},
and put $c=\lambda_{\lceil k\rceil}$. Since $\sum_{i}(\frac{w_{i}}{H}-q_{i})=0$,
\[
\frac{1}{H}\sum_{i}\lambda_{i}w_{i}-\Sigma_{k}(\lambda)=\sum_{i}(\lambda_{i}-c)(\frac{w_{i}}{H}-q_{i})\geq0.
\]
Every summand is nonnegative: for $i<\lceil k\rceil$ both factors
are nonpositive, while for $i>\lceil k\rceil$ both are nonnegative,
and for $i=\lceil k\rceil$ the first factor vanishes. 
\end{proof}

\subsection{The curvature term and its two representations}

We use the curvature term $Q$ with the Hilbert-Schmidt normalization
of \cite{LiPetersen2026}. Let $\eta_{1},\ldots,\eta_{m}$ be an orthonormal
eigenbasis of $R$ on $\bigwedge^{2}V\simeq\mathfrak{so}(V)$, where
$m=\frac{n(n-1)}{2}$ and 
$$R\eta_{a}=\kappa_{a}\eta_{a}.$$ For an
algebraic Weyl tensor $W$, set 
\begin{equation}
Q(R,W)=\sum^{m}_{a=1}\kappa_{a}\|\eta_{a}\cdot W\|^{2}.\label{eq:Q-definition}
\end{equation}
This is linear in $R$, quadratic in $W$, and independent of the
eigenbasis. Equivalently, for any orthonormal basis of $\bigwedge^{2}V$
it is the contraction 
\[
Q(R,W)=\sum_{a,b}\langle R\eta_{a},\eta_{b}\rangle\langle\eta_{a}\cdot W,\eta_{b}\cdot W\rangle_{\mathrm{HS}}.
\]

For $n\geq4$, let $S_{1},\ldots,S_{N}$ be an orthonormal eigenbasis
of $\mathring{W}$, with eigenvalues $\mu_{i}$, and put $\lambda_{i}=\kappa+\mu_{i}$,
the eigenvalues of $\mathring{R}=\kappa\Id+\mathring{W}$. We list
the following known identities in full tensor norms: 
\begin{equation}
\sum_{i}\mu_{i}=0,\quad\sum_{i}\mu^{2}_{i}=\frac{3}{4}|W|^{2},\label{eq:weyl-traces}
\end{equation}
\[
\sum_{i}|S_{i}\cdot W|^{2}=\frac{2(n^{2}+n-8)}{n}|W|^{2},
\]
and 
\begin{equation}
\sum_{i}\mu_{i}|S_{i}\cdot W|^{2}=-36\tr(W^{3})-4\tr((\mathring{W})^{3}).\label{eq:slot-cubic}
\end{equation}
These follow from the contraction formulas in \cite[Proposition~3.2 and Lemma~3.3]{DaiFu};
see also \cite[Proposition~4.1]{FuLuDai}. Here and below, the trace
of $W^{3}$ is taken on $\bigwedge^{2}V$, and the trace of $(\mathring{W})^{3}$
is taken on $S^{2}_{0}(V)$.

Expanding the action in \eqref{eq:Q-definition} and using the curvature
symmetries leads to the cubic expression 
\begin{equation}
Q(\kappa I+W,W)=(n-1)\kappa|W|^{2}-6\tr(W^{3})+2\tr((\mathring{W})^{3}).\label{eq:Q-polynomial}
\end{equation}
On an Einstein manifold with $\Ric=(n-1)\kappa g$, the Bochner identity
is, in Hilbert-Schmidt norms, 
\[
\frac{1}{2}\Delta\|W\|^{2}=\|\nabla W\|^{2}+\frac{1}{2}Q(\kappa I+W,W).
\]
This is the normalization of the curvature term used in \cite[(1.2)]{LiPetersen2026},
with our stated sign convention for the scalar Laplacian. Equivalently,
in full tensor norms, 
\begin{equation}
\frac{1}{2}\Delta|W|^{2}=|\nabla W|^{2}+2Q(\kappa I+W,W).\label{eq:bochner}
\end{equation}
The factor 2 here comes from \eqref{eq:norm-conventions}; it also
appears below when using the COSK.

\begin{proposition}\label{prop:two-representations} Let $W$ be
an algebraic Weyl tensor on an $n$-dimensional Euclidean space, $n\geq4$,
and let $\kappa\in\mathbb{R}$. With the eigenbasis and eigenvalues
above, 
\begin{align}
\sum_{i}\lambda_{i}|[\ad_{S_{i}},W]|^{2} & =2Q(\kappa I+W,W),\nonumber \\
\sum_{i}|[\ad_{S_{i}},W]|^{2} & =2(n-1)|W|^{2}.\label{eq:commutator-identities}
\end{align}
Set $B_{n}=\frac{8(n-4)}{n(n+2)}$ and define the nonnegative weights
\[
\omega_{i}=|S_{i}\cdot W|^{2}+16\mu^{2}_{i}+B_{n}|W|^{2},
\]
where $S_{i}\cdot W=\ad_{S_{i}}W+W\ad_{S_{i}}$. Then we also have
\begin{align}
\sum_{i}\lambda_{i}\omega_{i} &=6Q(\kappa I+W,W),\nonumber \\
\sum_{i}\omega_{i} &=6(n-1)|W|^{2}.\label{eq:second-representation}
\end{align}

\end{proposition}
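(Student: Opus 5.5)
Both pairs of identities follow from the listed trace identities \eqref{eq:weyl-traces}, \eqref{eq:slot-cubic}, the sum $\sum_i|S_i\cdot W|^2=\frac{2(n^2+n-8)}{n}|W|^2$, and the cubic formula \eqref{eq:Q-polynomial}. In each case we split $\lambda_i=\kappa+\mu_i$ and check the $\kappa$-linear part and the $\kappa$-free cubic part separately.

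\textbf{The symmetric representation.} Summing the weights gives
\[
\sum_i\omega_i=\frac{2(n^2+n-8)}{n}|W|^2+12|W|^2+N B_n|W|^2 .
\]
With $N=\frac{(n-1)(n+2)}{2}$ we have $NB_n=\frac{4(n-1)(n-4)}{n}$. The numerator of the total is then
\[
2n^2+2n-16+12n+4(n^2-5n+4)=6n^2-6n,
\]
so $\sum_i\omega_i=6(n-1)|W|^2$. This also settles the $\kappa$-part of $\sum_i\lambda_i\omega_i$, which matches the term $6(n-1)\kappa|W|^2$ in $6Q$. For the cubic part, $\sum_i\mu_i B_n|W|^2=0$ and $16\sum_i\mu_i^3=16\tr(\mathring W^3)$. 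Adding \eqref{eq:slot-cubic} gives
\[
-36\tr(W^3)+12\tr(\mathring W^3)=6\bigl(-6\tr W^3+2\tr\mathring W^3\bigr),
\]
which is exactly six times the cubic part of \eqref{eq:Q-polynomial}. Nonnegativity of $\omega_i$ is clear since $B_n\ge0$ for $n\ge4$.

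\textbf{The commutator representation.} The cleanest route is through the pair identity \eqref{eq:pair-norm-identity}, $|[\ad_S,W]|^2=4|\ad_SW|^2-|S\cdot W|^2$. This reduces the commutator sums to the quadratic form $S\mapsto|\ad_SW|^2$ on $S^2_0(V)$. We need its trace and its trace against $\mathring W$.

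\emph{Trace.} Write $|\ad_SW|^2=4\|\ad_SW\|^2$. Summing over an orthonormal basis of $S^2_0$ gives $\sum_i\ad_{S_i}^2$, a $O(n)$-invariant operator on $\bigwedge^2V$, hence a scalar $c_n$. We compute $c_n$ by completing to all of $S^2(V)$: the $\Id/\sqrt n$ component contributes $\ad_{\Id}^2/n=4/n$. On $e_1\wedge e_2$ the full sum over an orthonormal basis of $S^2(V)$ gives $n$. Hence $c_n=n-\frac4n$, and
\[
\sum_i|\ad_{S_i}W|^2=4\|W\|^2\Bigl(n-\frac4n\Bigr)=\Bigl(n-\frac4n\Bigr)|W|^2 .
\]
It follows that
\[
\sum_i|[\ad_{S_i},W]|^2=\Bigl(4n-\frac{16}{n}-\frac{2n^2+2n-16}{n}\Bigr)|W|^2=(2n-2)|W|^2,
\]
and this also gives the $\kappa$-part of the first identity.

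\emph{Cubic part.} We need $\sum_i\mu_i|[\ad_{S_i},W]|^2=-12\tr W^3+4\tr\mathring W^3$. By \eqref{eq:slot-cubic} this is equivalent to
\[
4\sum_i\mu_i|\ad_{S_i}W|^2=-48\tr W^3 .
\]
Here $\sum_i\mu_i|\ad_{S_i}W|^2=\langle \mathring W,\,S\mapsto|\ad_SW|^2\rangle$, a contraction of $W\otimes W\otimes W$. We expand it in an orthonormal frame using $\mathring W(S)_{ij}=\sum W_{iklj}S_{kl}$ and $(\ad_SW)_{ijkl}=\sum_p(S_{kp}W_{ijpl}+S_{lp}W_{ijkp})$. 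The sum over the basis $S_i$ weighted by $\mu_i$ is replaced by the kernel $\sum_i\mu_i(S_i)_{ab}(S_i)_{cd}$, which is the symmetrized, trace-projected $W_{acdb}$; the trace projection drops out since $W$ is Ricci-free. The resulting contractions are of the two standard cubic types $W_{ijkl}W_{klpq}W_{pqij}$ (proportional to $\tr W^3$) and $W_{ikjl}W_{kplq}W_{piqj}$. The latter is expressed via $\tr W^3$ and $\tr\mathring W^3$ using the first Bianchi identity, as in \cite{DaiFu}.

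This index bookkeeping, namely tracking the signs from the pair-swap in the commutator and confirming the $\tr\mathring W^3$ terms cancel, is the main obstacle. As a cross-check, the $\kappa$-independent identity can be verified on explicit Weyl tensors such as those of $\mathbb{CP}^2$ and $S^2\times S^2$ (trace-free parts), where both sides are computed from known spectra. Alternatively, one can use that $Q(R,W)$ is the Hilbert–Schmidt expression with $\eta_a\cdot W=[\ad_{\eta_a},W]$, and the completeness relation $\sum_a\ad_{\eta_a}\otimes\ad_{\eta_a}+\sum_i\ad_{S_i}\otimes\ad_{S_i}+\frac1n\ad_{\Id}\otimes\ad_{\Id}=$ (fixed tensor). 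This transfers the $\mathfrak{so}$-side formula for $Q$ to the $S^2_0$ side, with $R$ acting through $\mathring R$.
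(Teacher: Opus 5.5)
Your symmetric representation, both unweighted sums, and the reduction of the commutator cubic identity to $4\sum_i\mu_i|\ad_{S_i}W|^2=-48\tr(W^3)$ via \eqref{eq:pair-norm-identity} are correct. The reduction is exactly the paper's. Your Schur-lemma evaluation $\sum_i\ad_{S_i}^2=(n-\frac{4}{n})\Id$ on $\bigwedge^2V$ is a valid self-contained substitute for the cited count $\sum_i|S_i\cdot\omega|^2=\frac{(n-2)(n+2)}{n}|\omega|^2$ on $2$-forms.

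The gap is that the one new input, $\sum_i\mu_i|\ad_{S_i}W|^2=-12\tr(W^3)$ (equivalently $\sum_i\mu_i\|\ad_{S_i}W\|^2=-3\tr(W^3)$), is never established. You outline an index computation, call it the main obstacle, and stop. Neither fallback fills this.
\begin{itemize}
\item The $\mathbb{CP}^2$ and $S^2\times S^2$ checks only test $n=4$. Even in a fixed dimension, finitely many examples certify an identity between cubic invariants only once you know the space of $O(n)$-invariant cubics on Weyl tensors and your examples separate a basis of it.
\item The completeness relation for $\mathfrak{so}(V)\oplus S^2_0(V)\oplus\mathbb{R}\Id$ relates unweighted sums only. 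It does not convert the weighting in $Q$ by eigenvalues of $W$ on $\bigwedge^2V$ into a weighting by eigenvalues of $\mathring W$, and that conversion is precisely what must be proved.
\end{itemize}

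The paper gets the identity without index work. Apply the second-kind Bochner formula \eqref{eq:forms-NPW} on $2$-forms with curvature $T=W$. Both correction terms vanish and $Q_2(W,\omega)=-2\langle W\omega,\omega\rangle$, which gives $\sum_i\mu_i|S_i\cdot\omega|^2=-3\langle W\omega,\omega\rangle$. Then set $\omega=W(\eta_a)$ and sum over $a$.

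Your direct route also works, and is shorter than you anticipate.
\begin{itemize}
\item The kernel is $\sum_i\mu_i(S_i)_{ab}(S_i)_{cd}=\frac12(W_{cabd}+W_{cbad})$. Its partial trace $\sum_i\mu_iS_i^2$ vanishes because $\Ric_W=0$. Hence the two terms of $|\ad_SW|^2$ in which both factors of $S$ act on the same slot drop out.
\item In the remaining cross term, antisymmetrize against $\sum_{i,j}W_{ijpl}W_{ijkq}$ and use one first Bianchi identity. This yields $-4\tr(W^3)-8\tr(W^3)=-12\tr(W^3)$.
\end{itemize}
The contraction $W_{ikjl}W_{kplq}W_{piqj}$ and $\tr(\mathring W^3)$ never appear, so nothing from \cite{DaiFu} has to cancel. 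One of these two arguments has to be written out before the first representation is proved.
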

\begin{proof}
We start by establishing two formulas

\begin{align}
\sum_{i}\mu_{i}\|\ad_{S_{i}}W\|^{2} & =-3\tr(W^{3}),\nonumber \\
\sum_{i}\|\ad_{S_{i}}W\|^{2} & =\frac{(n-2)(n+2)}{4n}|W|^{2}.\label{eq:induced-action-contractions}
\end{align}
For the first, recall that $\ad_{S_{i}}W$ denotes composition of
operators on $\bigwedge^{2}V$ and for an orthonormal basis $\eta_{a}$
of $\bigwedge^{2}V$
\[
\|\ad_{S_{i}}W\|^{2}=\sum_{a}|\ad_{S_{i}}(W(\eta_{a}))|^{2}=\sum_{a}|S_{i}\cdot(W(\eta_{a}))|^{2}.
\]
Moreover, for an algebraic curvature operator the Weitzenböck curvature
on 2-forms reads
\[
Q_{2}(R,\omega)=g\left(\mathrm{Ric}\circ\omega+\omega\circ\mathrm{Ric}-2R(\omega),\omega\right),
\]
where it is understood that $\omega$ is an operator for the compositions.
Combining this with \cite[Proposition~2.1]{NPW} and using the Weyl
tensor as curvature shows that
\[
-3g\left(W(\omega),\omega\right)=\sum\mu_{i}|S_{i}\cdot \omega|^{2}.
\]
Letting $\omega=W\left(\eta_{a}\right)$ and summing over $a$ then
finishes the proof. 

The second unweighted formula follows in a similar
fashion using that 2-forms satisfy the identity
\[
\sum|S_{i}\cdot\omega|^{2}=\frac{(n-2)(n+2)}{n}|\omega|^{2}
\]
from \cite[Lemma 3.7 (a)]{NPW}.

Multiplying \eqref{eq:pair-norm-identity} by $\mu_{i}$ and summing,
then using \eqref{eq:slot-cubic} and \eqref{eq:induced-action-contractions},
yields 
\begin{align*}
\sum_{i}\mu_{i}|[\ad_{S_{i}},W]|^{2} & =-48\tr(W^{3})+36\tr(W^{3})+4\tr((\mathring{W})^{3})\\
 & =-12\tr(W^{3})+4\tr((\mathring{W})^{3})\\
 & =2Q(W,W).
\end{align*}
The unweighted version of the same calculation is 
\begin{align*}
\sum_{i}|[\ad_{S_{i}},W]|^{2} & =\left(\frac{4(n-2)(n+2)}{n}-\frac{2(n^{2}+n-8)}{n}\right)|W|^{2}\\
 & =2(n-1)|W|^{2}.
\end{align*}
Adding $\kappa$ times this identity proves the first line of \eqref{eq:commutator-identities}
by \eqref{eq:Q-polynomial}.

For the symmetric representation, \eqref{eq:weyl-traces}-\eqref{eq:slot-cubic}
yield 
\begin{align*}
\sum_{i}\bigl(|S_{i}\cdot W|^{2}+16\mu^{2}_{i}\bigr) & =\frac{2(n-1)(n+8)}{n}|W|^{2},\\
\sum_{i}\mu_{i}\bigl(|S_{i}\cdot W|^{2}+16\mu^{2}_{i}\bigr) & =-36\tr(W^{3})+12\tr((\mathring{W})^{3})=6Q(W,W).
\end{align*}
Finally, we have
\[
B_{n}N=\frac{4(n-1)(n-4)}{n},\quad\sum_{i}\mu_{i}=0,\quad\sum_{i}\lambda_{i}=N\kappa.
\]
Thus $\sum_{i}\omega_{i}=6(n-1)|W|^{2}$, while $\sum_{i}\mu_{i}\omega_{i}=6Q(W,W)$.
Replacing $\mu_{i}$ by $\lambda_{i}=\kappa+\mu_{i}$ adds $6(n-1)\kappa|W|^{2}$.
Together with \eqref{eq:Q-polynomial}, this proves \eqref{eq:second-representation}. 
\end{proof}

\section{A sharp estimate for the symmetric action}

\label{sec:sharp-estimates}

We prove Proposition~\ref{prop:sharp-four-slot} by decomposing the
Weyl tensor orthogonally into three types of components. The part
involving components of the form $W_{ijij}$ is controlled by Lemma~\ref{lem:sharp-edge-estimate},
whose proof gives an explicit sum-of-squares decomposition.

Throughout this section, $V$ is an $n$-dimensional Euclidean space,
$n\geq4$, and $W$ is an algebraic Weyl tensor. We identify symmetric
two-tensors with symmetric endomorphisms. All curvature norms are
full tensor norms, while $|S|^{2}=\tr(S^{2})$. Recall that
\begin{align*}
A_{n} & =12-\frac{20}{n}+\frac{16}{n(n-2)^{2}}=8+\frac{4(n-1)(n-4)^{2}}{n(n-2)^{2}}.
\end{align*}

\subsection{An orthogonal decomposition of the Weyl tensor}

The decomposition below is relative to an orthonormal eigenbasis of
$S$. It groups components according to the number of distinct indices,
with the three-index components further grouped by the two indices
that occur only once. This grouping keeps the Ricci equations within
the individual summands.
The support of a tensor $T_{i_1\cdots i_l}$  is the set $ \{(i_1,..., i_l)\mid T_{i_1\cdots i_l}\neq 0 \}  $.  If two tensors of the same type have disjoint supports, then they are clearly perpendicular.

\begin{lemma}\label{lem:weyl-component-decomposition} Fix an orthonormal
basis in which $S=\operatorname{diag}(a_{1},\ldots,a_{n})$. Every
algebraic Weyl tensor $W$ has the decomposition 
\begin{equation}
W=W^{[2]}+\sum_{j<k}W^{[3]}_{jk}+\sum_{\substack{I\subset\{1,\ldots,n\}\\
|I|=4
}
}W^{[4]}_{I},\label{eq:weyl-component-decomposition}
\end{equation}
where each summand retains the specified components of $W$ while
all other components vanish: 
\begin{enumerate}
\item $W^{[2]}$  is one term. It comprises the components of $W$ that use two distinct indices.
\item $W^{[3]}_{jk}$  is one of  $\binom{n}{2}$ terms. It comprises the components of $W$ in which $j$ and $k$ each
occur once and $i \notin \{j,k\}$ occurs twice. 
\item $W^{[4]}_{I}$ is one of $\binom{n}{4}$ terms. It comprises the components of $W$ whose four indices are all four
elements of $I$. 
\end{enumerate}
Every summand is an algebraic Weyl tensor. The terms are mutually
orthogonal, as are their symmetric actions by $S$ and evaluations
of the corresponding COSK operators on $S$. \end{lemma}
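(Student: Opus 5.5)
The plan is to classify the components $W_{ijkl}$, with respect to the fixed eigenbasis of $S$, by the \emph{multiset} of their indices $\{i,j,k,l\}$. Since $W_{ijkl}=0$ unless $i\neq j$ and $k\neq l$, a nonzero component has exactly one of three forms.
\begin{enumerate}
\item Two distinct indices, each occurring twice. These are the components $W_{ijij}$ up to symmetries.
\item Three distinct indices, one of which, say $i$, occurs twice while $j,k$ occur once.
\item Four distinct indices.
\end{enumerate}
These index patterns partition the possible supports. This is exactly the description of $W^{[2]}$, of the $W^{[3]}_{jk}$ (indexed by the pair of singly occurring indices, with all possible $i$ lumped together), and of the $W^{[4]}_I$. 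Hence the decomposition \eqref{eq:weyl-component-decomposition} holds tautologically as a sum of tensors with pairwise disjoint supports, and the summands are mutually orthogonal.

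The first substantive step is to check that each summand is an algebraic curvature tensor. The antisymmetries in each pair and the pair interchange permute the four indices, so they preserve the multiset. The first Bianchi identity relates $W_{xyzw}$, $W_{yzxw}$ and $W_{zxyw}$, which also share the same index multiset. Every identity defining an algebraic curvature tensor therefore involves only components from one class. Because each class is a union of multiset classes, the truncation of $W$ to a class again satisfies all these identities.

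Next I will verify that each summand has vanishing Ricci contraction. We have $\Ric_{jk}=\sum_i W_{ijik}$.
\begin{itemize}
\item For $j=k$, the terms $W_{ijij}$ are all of two-index type, so $\Ric_{W^{[2]}}=\Ric_W$ on the diagonal, while every other summand has zero diagonal Ricci.
\item For $j\neq k$, every term $W_{ijik}$ with $i\notin\{j,k\}$ has $i$ repeated and $j,k$ occurring once. So it belongs to $W^{[3]}_{jk}$ and to no other summand.
\item Four-index components never appear in a Ricci contraction.
\end{itemize}
Thus $\Ric$ of each summand is a restriction of $\Ric_W=0$, and it vanishes. This bookkeeping is the main point of the lemma, and it is the reason the three-index terms are grouped by the pair $\{j,k\}$.

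Finally, I will treat the two actions. In the diagonal basis, $(S\cdot W)_{ijkl}=(a_i+a_j+a_k+a_l)W_{ijkl}$ multiplies componentwise. So $S\cdot$ (summand) has support contained in that of the summand, and the symmetric actions remain disjointly supported, hence orthogonal.

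For the COSK, Weyl summands need no projection, and
\[
(\mathring{W}(S))_{ij}=\sum_k W_{ikkj}a_k.
\]
For $i=j$ only two-index components contribute. For $i\neq j$ only components with $k$ repeated and $i,j$ single contribute, that is, only $W^{[3]}_{ij}$. Four-index components contribute nothing, so $\mathring{W}^{[4]}_I(S)=0$. Consequently $\mathring{W}^{[2]}(S)$ is diagonal, while $\mathring{W}^{[3]}_{jk}(S)$ is supported on the entries $(j,k)$ and $(k,j)$. These are again disjointly supported symmetric matrices, which gives the orthogonality. No step seems hard; the only care needed is in the Ricci bookkeeping.
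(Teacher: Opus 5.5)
Your proposal is correct and follows essentially the same route as the paper's proof. Like the paper, you classify components by their index multiset and note that the pair symmetries and the first Bianchi identity never mix multisets. You then check that each Ricci entry of $W$ is carried by exactly one summand: the diagonal entries by $W^{[2]}$ and the $(j,k)$ entry by $W^{[3]}_{jk}$. Finally, you read off disjoint supports for $S\cdot W$ and $\mathring{W}(S)$ from the diagonal form of $S$.
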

\begin{proof}
Antisymmetry in each pair of indices implies that a nonzero component
has multiplicities $(2,2)$, $(2,1,1)$, or $(1,1,1,1)$. These patterns
give exactly the terms in \eqref{eq:weyl-component-decomposition}.
The curvature symmetries and every first Bianchi identity involve
only permutations of one fixed collection of four indices, with repetitions
counted. All their terms therefore lie in the same summand. Thus each
summand retains the algebraic curvature identities.

It remains to check the Ricci contractions. For $i\ne j$, put $x_{ij}=W_{ijij}$
and set $x_{ii}=0$. The only possibly nonzero Ricci entries of $W^{[2]}$
are the diagonal entries, and these vanish because 
\[
(\Ric_{W^{[2]}})_{ii}=\sum_{j\ne i}x_{ij}=(\Ric_{W})_{ii}=0.
\]
For fixed $j<k$, put $y_{i}=W_{ijik}$, with $i\notin\{j,k\}$. The
only possibly nonzero Ricci entries of $W^{[3]}_{jk}$ are the $(j,k)$
and $(k,j)$ entries. Both are calculated by 
\[
\sum_{i\notin\{j,k\}}y_{i}=(\Ric_{W})_{jk}=0.
\]
Each $W^{[4]}_{I}$ has zero Ricci contraction automatically, since
every contraction repeats an index. Hence all the terms are Weyl
tensors.

A fixed set of $4$ indices $\left(i,j,k,l\right)$ can only lie in the support of one of the possible term. Thus the terms are orthogonal. In the chosen basis, 
\[
(S\cdot W)_{ijkl}=(a_{i}+a_{j}+a_{k}+a_{l})W_{ijkl},
\]
so their symmetric actions will also have mutually disjoint supports. Finally,
since $S$ is diagonalized we have
\[
\mathring{W}(S)_{uv}=\sum_{r}a_{r}W_{urrv}.
\]
For $W^{[2]}$, this is a diagonal matrix with $i$th entry $-\sum_{j}a_{j}x_{ij}$.
For $W^{[3]}_{jk}$, only the $(j,k)$ and $(k,j)$ entries can be
nonzero, and both equal $-\sum_{i\notin\{j,k\}}a_{i}y_{i}$. For $W^{[4]}_{I}$,
the expression vanishes. Thus for any pair of indices $\left(u,v\right)$
only one of the terms can be nonzero, proving the last orthogonality
claim. 
\end{proof}

\subsection{A matrix inequality}

The components of $x_{ij}=W_{ijij}$ in an eigenbasis of $S$ form
a symmetric matrix with zero diagonal and zero row sums. The following
lemma gives the required estimate for this matrix.

\begin{lemma}\label{lem:sharp-edge-estimate} Let $n\geq4$. Suppose
that $a_{1},\ldots,a_{n}$ and $x_{ij}$, $1\leq i,j\leq n$, are
real numbers satisfying 
\[
\sum_{i}a_{i}=0,\quad x_{ij}=x_{ji},\quad x_{ii}=0,\quad\sum_{j}x_{ij}=0\quad\text{for every }i.
\]
%Set 
%\[
%\ell_{n}=3-\frac{5}{n}+\frac{4}{n(n-2)^{2}}.
%\]
Then
\[
\sum_{i<j}(a_{i}+a_{j})^{2}x^{2}_{ij}+\sum_{i}\left(\sum_{j}a_{j}x_{ij}\right)^{2}\leq \frac{A_n}{4}\left(\sum_{i}a^{2}_{i}\right)\left(\sum_{i<j}x^{2}_{ij}\right).
\]
Equality implies 
\begin{equation}
(a_{i}-a_{j})x_{ij}=\frac{1}{n}\sum_{k}a_{k}(x_{jk}-x_{ik}), \text{ for } i\ne j.\label{eq:equality-condition}
\end{equation}
\end{lemma}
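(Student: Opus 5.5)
The plan is to treat the left-hand side, for fixed $a$, as a quadratic form in $x$. Here $x$ ranges over the space $\mathcal X$ of symmetric, zero-diagonal, zero-row-sum matrices. I will produce an explicit sum-of-squares certificate for $\frac{A_n}{4}|a|^2|x|^2-\mathrm{LHS}$, where $|x|^2=\sum_{i<j}x_{ij}^2$ and $|a|^2=\sum_i a_i^2$. The equality condition \eqref{eq:equality-condition} tells us what one of the squares must be.

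\textbf{Rewriting the left-hand side.} Put $r_i=\sum_j x_{ij}^2$ and $b_i=\sum_j a_jx_{ij}$. Write $(a_i+a_j)^2=2a_i^2+2a_j^2-(a_i-a_j)^2$. This gives
\[
\sum_{i<j}(a_i+a_j)^2x_{ij}^2=2\sum_i a_i^2r_i-\sum_{i<j}z_{ij}^2,\qquad z_{ij}=(a_i-a_j)x_{ij}.
\]
The matrix $z$ is antisymmetric. Using $\sum_j x_{ij}=0$, its row sums are $\sum_j z_{ij}=-b_i$. Note also that $\sum_i b_i=0$, since the column sums of $x$ vanish. Split $z$ orthogonally into a divergence-free part $y$ and a gradient part $(u_i-u_j)$, with $u=-b/n$. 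By construction
\[
y_{ij}=(a_i-a_j)x_{ij}-\frac1n\sum_k a_k(x_{jk}-x_{ik}),
\]
and $\sum_{i<j}z_{ij}^2=|y|^2+\frac1n\sum_i b_i^2$. Therefore
\[
\mathrm{LHS}=2\sum_i a_i^2r_i+\Bigl(1-\frac1n\Bigr)\sum_i b_i^2-|y|^2 .
\]
The term $-|y|^2$ is exactly the square whose vanishing is \eqref{eq:equality-condition}. So the equality claim will follow once I show
\[
2\sum_i a_i^2r_i+\Bigl(1-\frac1n\Bigr)\sum_i b_i^2\leq \frac{A_n}{4}|a|^2|x|^2 .
\]

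\textbf{Bounding the remaining two terms.} This inequality must itself be sharp, because the paper asserts that $A_n$ is optimal. I will bound both terms using the constraints on $x$.

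For the row norms, the row $i$ of $x$ is balanced in every other row $j$. Applying Cauchy--Schwarz to the identity $x_{ij}=-\sum_{k\neq i,j}x_{jk}$ should give $r_i\le c_n|x|^2$ with an explicit $c_n<2$. I expect $c_n$ to be governed by the matrix with row $i$ equal to $(n-1)$-times a balanced vector.

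For $b_i$, I will use $\sum_j x_{ij}=0$ to replace $a_j$ by $a_j-a_i$ or $a_j+\frac{a_i}{n-2}$, choosing whichever shift is optimal. Cauchy--Schwarz then gives $b_i^2\le(\cdots)r_i$ with a coefficient depending on $a_i$ and $|a|^2$.

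After these substitutions, the expression becomes $\sum_i \phi(a_i)r_i$ for an explicit function $\phi$. The estimate then reduces to maximizing $\sum_i\phi(a_i)r_i$ subject to $\sum_i r_i=2|x|^2$, $r_i\le c_n|x|^2$, and $\sum_i a_i=0$. This is a finite-dimensional problem.

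\textbf{Identifying the extremal configuration.} I will guess it from the form $A_n=8+\frac{4(n-1)(n-4)^2}{n(n-2)^2}$. The guess is that $a$ takes two values, concentrated on one or two indices against the rest. The matching $x$ is supported on the edges incident to two distinguished indices, with weights fixed by the row-sum constraints; this is where the factors $(n-2)$ come from. I will check that all intermediate Cauchy--Schwarz steps are tight on this configuration. If they are, the constant is forced to be $A_n/4$.

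\textbf{Main obstacle.} The hard step is the previous paragraph: making the row-norm and $b_i$ bounds jointly sharp. Bounding $\sum_i a_i^2r_i$ and $\sum_i b_i^2$ separately will likely lose constants. If that happens, my fallback is to diagonalize the quadratic form on $\mathcal X$ directly for the extremal two-valued $a$, which gives the largest eigenvalue. I would then prove a convexity or majorization argument in $a$ that reduces the general case to two-valued $a$, using the Lagrange-multiplier equations for a maximizing pair $(a,x)$ to restrict the number of distinct values of $a$.
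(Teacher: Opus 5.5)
\textbf{There is a genuine gap.} Your rewriting is correct and coincides with the paper's identity for $M$. With $G=\sum_i a_i^2r_i$, $H=\sum_i b_i^2$ and $M=|y|^2$, the left side equals $2G+(1-\frac1n)H-M$, and $M=0$ is the equality condition.

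The trouble starts when you discard $-M$. That replaces the lemma by the strictly stronger claim $2G+(1-\frac1n)H\le\frac{A_n}{4}|a|^2|x|^2$. The paper's certificate does not give this. It keeps $M$ with coefficient $c_3=\frac{(n-1)(n-2)(n-3)+2}{n(n-2)^2}<1$. Moving $M$ across therefore leaves $c_1\sum p_{ijk}^2+c_2\sum q_{ijk}^2-(1-c_3)M$, which is not manifestly nonnegative. Your reduced inequality is tight at the same extremal pair and may well be true, but it would need its own proof.

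More seriously, the method you propose for the reduced inequality provably fails for every $n\ge5$. Since row $i$ of $x$ is balanced, $b_i=\sum_{j\ne i}(a_j+c)x_{ij}$ for any $c$. The optimal shift is $c=\frac{a_i}{n-1}$, not $\frac{a_i}{n-2}$. It gives $b_i^2\le r_i\bigl(|a|^2-\frac{n}{n-1}a_i^2\bigr)$, and no bound depending only on $a_i,|a|,r_i$ can be better. Hence $\phi(a_i)=(1-\frac1n)|a|^2+a_i^2$, and
\[
\sum_i\phi(a_i)r_i=2\Bigl(1-\frac1n\Bigr)|a|^2|x|^2+\sum_i a_i^2r_i .
\]
The sharp row cap, from the minimal-norm completion of a balanced row, is $r_i\le\frac{n-3}{n-2}|x|^2$; any cap you derive is at least this. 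Your finite-dimensional problem then has maximum $\bigl(3-\frac2n-\frac1{n-2}\bigr)|a|^2|x|^2$, attained at $a=(e_1-e_2)/\sqrt2$ with $r_1=r_2$ at the cap. This exceeds $\frac{A_n}{4}$ by $\frac{2(n-1)(n-4)}{n(n-2)^2}>0$. Once you pass to the row data $(a_i,r_i)$, you lose the coupling between different rows that the zero row sums impose.

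For $n=4$ the route does close. There every $r_i=\frac12|x|^2$, and the bound equals $2=\frac{A_4}{4}$.

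Your fallback, Lagrange multipliers plus a reduction to two-valued $a$, is not yet an argument. Your guessed extremal configuration is also off. In the extremal $x_P$ the row sums force $x_{cd}=\frac{2}{n-3}\ne0$ for $c,d\ge3$. A matrix supported only on edges meeting $\{1,2\}$ has $x_{12}=0$ and gives a value near $1$, not $3$.

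The missing ingredient is squares that couple three indices at once. The paper keeps $M$ and adds the fully symmetric $p_{ijk}$ and the antisymmetric, cyclically summing-to-zero $q_{ijk}$. For $n>4$ this yields the exact identity
\[
\frac{A_n}{4}-F-H=\frac{n-3}{3(n-4)(n-2)^2}\sum p_{ijk}^2+\frac{2n^2-9n+12}{6n(n-1)(n-2)^2}\sum q_{ijk}^2+c_3M .
\]
It treats $n=4$ separately through $2-F-H=2M$.
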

\begin{proof}
If all $a_{i}$ or all $x_{ij}$ vanish, both assertions are immediate.
Otherwise, homogeneity allows us to assume 
\[
\sum_{i}a^{2}_{i}=1,\quad\sum_{i<j}x^{2}_{ij}=1.
\]
Put $v_{i}=\sum_{j}a_{j}x_{ij}$ and define the quantities 
\[
F=\sum_{i<j}(a_{i}+a_{j})^{2}x^{2}_{ij},\quad G=\sum_{i<j}(a^{2}_{i}+a^{2}_{j})x^{2}_{ij},\quad H=\sum_{i}v^{2}_{i}.
\]
Thus the desired inequality is $F+H\leq\frac{A_n}{4}$. The zero row sums
give 
\[
\sum_{i}v_{i}=0,\quad\sum_{j}(a_{i}-a_{j})x_{ij}=-v_{i}.
\]
To record the square that will give the equality condition, set 
\[
M=\sum_{i<j}\left((a_{i}-a_{j})x_{ij}-\frac{v_{j}-v_{i}}{n}\right)^{2}.
\]
Since $(a_{i}-a_{j})x_{ij}$ is skew-symmetric in $i,j$, we have
\begin{align*}
\sum_{i<j}(a_{i}-a_{j})x_{ij}(v_{j}-v_{i}) & =-\sum_{i}v_{i}\sum_{j}(a_{i}-a_{j})x_{ij}=H,\\
\sum_{i<j}(v_{j}-v_{i})^{2} & =n\sum_{i}v^{2}_{i}-\left(\sum_{i}v_{i}\right)^{2}=nH.
\end{align*}
Expanding $M$, and using $(a_{i}-a_{j})^{2}=2(a^{2}_{i}+a^{2}_{j})-(a_{i}+a_{j})^{2}$,
therefore gives 
\begin{equation}
M=\sum_{i<j}(a_{i}-a_{j})^{2}x^{2}_{ij}-\frac{H}{n}=2G-F-\frac{H}{n}.\label{eq:gradient-projection}
\end{equation}

\smallskip{}
\noindent \textbf{The case $n=4$.} The zero row sums imply 
\[
x_{12}=x_{34}=r,\quad x_{13}=x_{24}=s,\quad x_{14}=x_{23}=t,\quad r+s+t=0.
\]
Normalization gives $2(r^{2}+s^{2}+t^{2})=1$, so every row has squared
norm $\frac{1}{2}$ and hence $G=\frac{1}{2}$. Moreover, 
\[
v_{1}+v_{2}=(a_{1}+a_{2})r+(a_{3}+a_{4})(s+t)=2(a_{1}+a_{2})x_{12}.
\]
The same calculation for each pair shows that $(a_{i}+a_{j})x_{ij}=\frac{v_{i}+v_{j}}{2}$.
Consequently, 
\[
F=\frac{1}{4}\sum_{i<j}(v_{i}+v_{j})^{2}=\frac{1}{4}\left(2\sum_{i}v^{2}_{i}+\left(\sum_{i}v_{i}\right)^{2}\right)=\frac{H}{2}.
\]
Since $A_{4}=8$, equation~\eqref{eq:gradient-projection} now
yields 
\[
2-F-H=2M.
\]
This proves the inequality and shows that equality forces $M=0$.

\smallskip{}
\noindent \textbf{The case $n>4$.} For distinct $i,j,k$, define 
\begin{align*}
p_{ijk}={} & (n-4)(a_{i}x_{jk}+a_{j}x_{ik}+a_{k}x_{ij}) -\frac{2(v_{i}+v_{j}+v_{k})}{n-3}\\
 & +2\bigl((a_{i}+a_{j})x_{ij}+(a_{i}+a_{k})x_{ik}+(a_{j}+a_{k})x_{jk}\bigr)
\end{align*}
and 
\begin{equation*}
q_{ijk}=\bigl(2(n-1)a_{i}+a_{j}+a_{k}\bigr)x_{jk}-\bigl(2(n-1)a_{j}+a_{i}+a_{k}\bigr)x_{ik}-\frac{v_{i}-v_{j}}{n}.  
\end{equation*}
The expression $p_{ijk}$ is symmetric in its three indices, whereas
$q_{jik}=-q_{ijk}$ and $q_{ijk}+q_{jki}+q_{kij}=0$. The deficit
in our inequality has the following sum-of-squares decomposition:
\begin{align*}
\frac{A_n}{4}-F-H {} &= \frac{n-3}{3(n-4)(n-2)^{2}}\sum_{i<j<k}p^{2}_{ijk}\\
 & +\frac{2n^{2}-9n+12}{6n(n-1)(n-2)^{2}}\sum_{i<j}\sum_{k\ne i,j}q^{2}_{ijk}\\
 & +\frac{(n-1)(n-2)(n-3)+2}{n(n-2)^{2}}M.
\end{align*}
%The identity follows by expanding the squares and using the zero row
%sums and the two normalizations; the detailed expansion is omitted.
To verify the decomposition, expand the squares and collect terms,
using the zero row sums and the two normalizations. This gives
\begin{align*}
\sum_{i<j<k}p_{ijk}^{2}
&=(n-4)^2-4(n-4)F-2(n-4)^2G\\
&\quad+\left((n-4)^2+\frac{2(n-4)}{n-3}\right)H,
\end{align*}
and
\begin{align*}
\sum_{i<j}\sum_{k\ne i,j}q_{ijk}^{2}
&=8(n-1)^2-2(n-1)F-4(n-1)^2G\\
&\quad-\left(4(n-1)^2+2-\frac{2}{n}\right)H.
\end{align*}
Substituting these expansions, together with
$M=2G-F-\frac{H}{n}$, into the right-hand side of the
sum-of-squares decomposition yields $\frac{A_n}{4}-F-H$.
All three coefficients are positive for $n>4$, since $2n^{2}-9n+12=(n-4)(2n-1)+8>0$.
Thus the identity proves the inequality, and equality forces $M=0$,
which is exactly \eqref{eq:equality-condition}.

Finally, rescaling $a$ and $X$ restores the original normalization
and proves both assertions. 
\end{proof}

\subsection{Proof of the symmetric-action estimate}
\begin{proof}[Proof of Proposition~\ref{prop:sharp-four-slot}]
The inequality is immediate when $S=0$. Otherwise, by homogeneity,
assume $|S|=1$ and choose an orthonormal eigenbasis in which 
\[
S=\operatorname{diag}(a_{1},\ldots,a_{n}),\quad\sum_{i}a_{i}=0,\quad\sum_{i}a^{2}_{i}=1.
\]
For an algebraic Weyl tensor $T$, write 
\[
\mathcal{F}_{S}(T)=|S\cdot T|^{2}+16|\mathring{T}(S)|^{2}.
\]
Let $\mathcal{B}$ be the family of summands in Lemma~\ref{lem:weyl-component-decomposition}.
The orthogonality assertions give the exact identities 
\begin{equation}
|W|^{2}=\sum_{T\in\mathcal{B}}|T|^{2},\quad\mathcal{F}_{S}(W)=\sum_{T\in\mathcal{B}}\mathcal{F}_{S}(T).\label{eq:action-block-additivity}
\end{equation}
It therefore suffices to bound $\mathcal{F}_{S}(W)$ on each of the
three types of summands.

\smallskip{}
\noindent \emph{Two distinct indices.} For $W^{[2]}$, let $x_{ij}=W_{ijij}$
for $i\ne j$ and $x_{ii}=0$, and put $E(X)=\sum_{i<j}x^{2}_{ij}$.
The matrix $X$ is symmetric and has zero row sums by Lemma~\ref{lem:weyl-component-decomposition}.
Each $x_{ij}$, $i<j$, occurs in four tensor components, up to sign,
and the symmetric-action multipliers are $2(a_{i}+a_{j})$. Consequently,
\begin{align*}
|W^{[2]}|^{2} & =4E(X),\\
|S\cdot W^{[2]}|^{2} & =16\sum_{i<j}(a_{i}+a_{j})^{2}x^{2}_{ij},\\
\mathring{W}^{[2]}(S) & =-\operatorname{diag}(Xa).
\end{align*}
All the hypotheses of Lemma~\ref{lem:sharp-edge-estimate} hold,
so 
\begin{align*}
\mathcal{F}_{S}(W^{[2]}) & =16\left(\sum_{i<j}(a_{i}+a_{j})^{2}x^{2}_{ij}+|Xa|^{2}\right)\\
 & \leq 4A_{n}E(X)=A_{n}|W^{[2]}|^{2}.
\end{align*}

\smallskip{}
\noindent \emph{Three distinct indices.} Fix $j<k$ and set $y_{i}=W_{ijik}$
for $i\notin\{j,k\}$. In this paragraph every sum over $i$ excludes
$j,k$. Each $y_{i}$ occurs in eight tensor components, up to sign,
and the symmetric-action multipliers are $2a_{i}+a_{j}+a_{k}$. The
contraction formula in Lemma~\ref{lem:weyl-component-decomposition}
thus yields 
\begin{align*}
|W^{[3]}_{jk}|^{2} & =8\sum_{i}y^{2}_{i},\\
\mathcal{F}_{S}(W^{[3]}_{jk}) & =8\sum_{i}(2a_{i}+a_{j}+a_{k})^{2}y^{2}_{i}+32\left(\sum_{i}a_{i}y_{i}\right)^{2}.
\end{align*}
The vector $a$ has unit length and is orthogonal to $\mathbf{1}=(1,\ldots,1)$.
Therefore 
\[
(2a_{i}+a_{j}+a_{k})^{2}\leq\left|2e_{i}+e_{j}+e_{k}-\frac{4}{n}\mathbf{1}\right|^{2}=6-\frac{16}{n}.
\]
Also, Cauchy-Schwarz gives $(\sum_{i}a_{i}y_{i})^{2}\leq(\sum_{i}a^{2}_{i})(\sum_{i}y^{2}_{i})\leq\sum_{i}y^{2}_{i}$.
It follows that 
\[
\mathcal{F}_{S}(W^{[3]}_{jk})\leq\left(10-\frac{16}{n}\right)|W^{[3]}_{jk}|^{2}.
\]
This coefficient is strictly smaller than $A_{n}$ as 
\begin{equation}
A_{n}-\left(10-\frac{16}{n}\right)=2-\frac{4}{n}+\frac{16}{n(n-2)^{2}}>0.\label{eq:three-index-strictness}
\end{equation}

\smallskip{}
\noindent \emph{Four distinct indices.} Fix $I=\{i,j,k,l\}$. The
contraction $\mathring{W}^{[4]}_{I}(S)$ is zero, and every component
of $S\cdot W^{[4]}_{I}$ has the same multiplier $a_{i}+a_{j}+a_{k}+a_{l}$.
Hence 
\begin{align*}
\mathcal{F}_{S}(W^{[4]}_{I}) & =(a_{i}+a_{j}+a_{k}+a_{l})^{2}|W^{[4]}_{I}|^{2}\\
 & \leq\left|e_{i}+e_{j}+e_{k}+e_{l}-\frac{4}{n}\mathbf{1}\right|^{2}|W^{[4]}_{I}|^{2}\\
 & =\left(4-\frac{16}{n}\right)|W^{[4]}_{I}|^{2}.
\end{align*}
The coefficient is again strictly smaller than $A_{n}$. When $n=4$,
it vanishes because $\sum_{i}a_{i}=0$.

Adding these three estimates using \eqref{eq:action-block-additivity}
proves \eqref{eq:intro-sharp}.

\smallskip{}
\noindent \emph{Optimality.} Let $W_{P}$ be the Weyl tensor of the
Einstein product 
\[
P=S^{2}(n-1)\times S^{n-2}\left(\frac{n-1}{n-3}\right),
\]
where $S^{d}(k)$ denotes the round sphere of sectional curvature
$k$. The Einstein constant of $P$ is $n-1$. Define 
\begin{equation}
S_{P}=\alpha g_{2}+\beta g_{n-2},\quad\alpha=\sqrt{\frac{n-2}{2n}},\quad\beta=-\sqrt{\frac{2}{n(n-2)}}.\label{eq:product-splitting}
\end{equation}
Thus $2\alpha+(n-2)\beta=0$ and $|S_{P}|=1$. In an adapted orthonormal
frame, the only independent nonzero components of $W_{P}$ are 
\begin{align}
(W_{P})_{1212} & =n-2,\\
(W_{P})_{iaia} &=-1, \text{ for } i\leq2<a,\nonumber \\
(W_{P})_{abab} & =\frac{2}{n-3}, \text{ for } 3\leq a<b\leq n.\label{eq:product-Weyl}
\end{align}
Let $\left(X_{P}\right)_{ij}$ be the corresponding matrix and $a=(\alpha,\alpha,\beta,\ldots,\beta)$.
We have 
\begin{align*}
E\left(X_{P}\right):=\sum_{i<j}(X_{P})^{2}_{ij} & =(n-2)^{2}+2(n-2)+\binom{n-2}{2}\frac{4}{(n-3)^{2}}\\
 & =\frac{(n-1)(n-2)^{2}}{n-3}.
\end{align*}
The first two entries of $X_{P}a$ equal $(n-2)(\alpha-\beta)=n\alpha$,
and each remaining entry equals $-2\alpha+2\beta=n\beta$. Thus $\mathring{W}_{P}(S_{P})=-nS_{P}$.
Similarly, 
\begin{align*}
F_{P}&:=\sum_{i<j}(a_{i}+a_{j})^{2}(X_{P})^{2}_{ij} \\
& =4\alpha^{2}(n-2)^{2}+2(n-2)(\alpha+\beta)^{2}+\binom{n-2}{2}\frac{16\beta^{2}}{(n-3)^{2}}\\
 & =\frac{2(n-2)^{3}+(n-4)^{2}+\frac{16}{n-3}}{n}.
\end{align*}
Substitution gives $F_{P}+n^{2}=\frac{A_{n}}{4}E(X_P)$. Since $|W_{P}|^{2}=4E(X_P)$
and $|S_{P}\cdot W_{P}|^{2}=16F_{P}$, we conclude that 
\[
\mathcal{F}_{S_{P}}(W_{P})=16(F_{P}+n^{2})=A_{n}|S_{P}|^{2}|W_{P}|^{2}.
\]
This proves optimality for every $n\geq4$, even when $S$ is an eigenvector. 
\end{proof}

\section{A sharp joint estimate}

\label{sec:joint-estimate}

To prove Proposition~\ref{prop:joint-estimate} we again use the
orthogonal decomposition from Lemma~\ref{lem:weyl-component-decomposition}.
On $W^{[2]}$ the commutator term vanishes, so the estimate follows
from Section~\ref{sec:sharp-estimates}. On the other two parts,
the symmetric and commutator actions depend on orthogonal components
of the eigenvalue vector of $S$. Keeping this dependence gives the
joint bound and explains why its two coefficients are attained by
different Weyl tensors.

\subsection{Proof of the joint estimate}
\begin{proof}[Proof of Proposition~\ref{prop:joint-estimate}]
The case $S=0$ is immediate. By homogeneity, we may assume that
$|S|=1$ and choose an orthonormal basis $e_{1},...,e_{n}$ with 
\[
S=\operatorname{diag}(a_{1},\ldots,a_{n}),\quad\sum_{i}a_{i}=0,\quad\sum_{i}a^{2}_{i}=1.
\]
For an algebraic Weyl tensor $T$, put 
\begin{align*}
\mathcal{F}_{S}(T) & =|S\cdot T|^{2}+16|\mathring{T}(S)|^{2},\\
\mathcal{J}_{S}(T) & =\frac{3}{8}|[\ad_{S},T]|^{2}+\frac{\mathcal{F}_{S}(T)}{A_{n}}.
\end{align*}
We must show $\mathcal{J}_{S}(W)\leq|W|^{2}$. Let $\mathcal{B}$
denote the family of summands in \eqref{eq:weyl-component-decomposition}.
In the chosen basis, 
\[
[\ad_{S},T]_{ijkl}=(a_{k}+a_{l}-a_{i}-a_{j})T_{ijkl},
\]
so the commutator preserves the disjoint component supports of these
summands. Together with the orthogonality assertions of Lemma~\ref{lem:weyl-component-decomposition},
this gives 
\begin{equation}
\mathcal{J}_{S}(W)=\sum_{T\in\mathcal{B}}\mathcal{J}_{S}(T),\quad|W|^{2}=\sum_{T\in\mathcal{B}}|T|^{2}.\label{eq:joint-block-additivity}
\end{equation}
We therefore prove the estimate separately on each type of summand.

\smallskip{}
\noindent \emph{Two distinct indices.} For $T=W^{[2]}$, the unordered
index pairs of every nonzero component coincide. Hence $[\ad_{S},T]=0$,
and Proposition~\ref{prop:sharp-four-slot} gives 
\[
\mathcal{J}_{S}(T)=\frac{\mathcal{F}_{S}(T)}{A_{n}}\leq|T|^{2}.
\]

\smallskip{}
\noindent \emph{Three distinct indices.} Fix $j<k$ and write $T=W^{[3]}_{jk}$
and $y_{i}=W_{ijik}$ for $i\notin\{j,k\}$. Sums involving $y_{i}$
exclude $j,k$. Set 
\[
d=a_{j}-a_{k},\quad a'=a-\frac{1}{2}d(e_{j}-e_{k}),\quad S'=\operatorname{diag}(a'_{1},\ldots,a'_{n}).
\]
Thus $a'_{j}=a'_{k}=\frac{a_{j}+a_{k}}{2}$, while $a'_{i}=a_{i}$
for every other index. It follows that 
\[
\sum^{n}_{r=1}a'_{r}=0,\quad|S'|^{2}=1-\frac{d^{2}}{2},\quad0\leq d^{2}\leq2.
\]
The symmetric-action multipliers are $2a_{i}+a_{j}+a_{k}$ and $-\sum_{i}a_{i}y_{i}$
is unchanged by this change. Consequently $\mathcal{F}_{S}(T)=\mathcal{F}_{S'}(T)$.
On the other hand, the commutator multipliers are $\pm d$. Thus 
\[
|[\ad_{S},T]|^{2}=d^{2}|T|^{2}.
\]
The three-index estimate proved in Section~\ref{sec:sharp-estimates},
applied to $S'$ and rescaled by homogeneity, gives 
\[
\mathcal{F}_{S}(T)\leq\left(10-\frac{16}{n}\right)\left(1-\frac{d^{2}}{2}\right)|T|^{2}.
\]
For $T\ne0$, we obtain 
\begin{align*}
\frac{\mathcal{J}_{S}(T)}{|T|^{2}} & \leq\frac{d^{2}}{2}\,\frac{3}{4}+\left(1-\frac{d^{2}}{2}\right)\frac{10-\frac{16}{n}}{A_{n}}\\
 & \leq\max\left\{ \frac{3}{4},\frac{10-\frac{16}{n}}{A_{n}}\right\} <1.
\end{align*}
The last strict inequality follows from \eqref{eq:three-index-strictness}.

\smallskip{}
\noindent \emph{Four distinct indices.} Relabel the four indices
of a summand $T=W^{[4]}_{I}$ using $I=\{1,2,3,4\}$ and set 
\[
u_{1}=T_{1234},\quad u_{2}=T_{1342},\quad u_{3}=T_{1423}.
\]
The curvature symmetries determine all the other components from these
three numbers, and the first Bianchi identity is exactly $u_{1}+u_{2}+u_{3}=0$.
Each $u_{\alpha}$ occurs eight times, up to sign; hence, with $U=u^{2}_{1}+u^{2}_{2}+u^{2}_{3}$,
\[
|T|^{2}=8U.
\]
Separate the mean of the four eigenvalues by writing 
\begin{align*}
c & =\frac{a_{1}+a_{2}+a_{3}+a_{4}}{4},\\
z & =(a_{1}-c,a_{2}-c,a_{3}-c,a_{4}-c),\quad\sum^{4}_{r=1}z_{r}=0.
\end{align*}
The three commutator multipliers are 
\begin{align*}
d_{1} & =a_{3}+a_{4}-a_{1}-a_{2},\\
d_{2} & =a_{4}+a_{2}-a_{1}-a_{3},\\
d_{3} & =a_{2}+a_{3}-a_{1}-a_{4}.
\end{align*}
They depend only on $z$, and direct expansion gives 
\begin{align*}
\sum^{3}_{\alpha=1}d^{2}_{\alpha} & =4\sum^{4}_{r=1}a^{2}_{r}-\left(\sum^{4}_{r=1}a_{r}\right)^{2}=4|z|^{2},\\
|[\ad_{S},T]|^{2} & =8\sum^{3}_{\alpha=1}d^{2}_{\alpha}u^{2}_{\alpha}.
\end{align*}
The first Bianchi identity implies the identity 
\[
2U-3u^{2}_{\alpha}=(u_{\beta}-u_{\gamma})^{2},
\]
where $(\alpha,\beta,\gamma)$ runs cyclically through $(1,2,3)$.
Multiplying by $d^{2}_{\alpha}$ and summing yields 
\begin{align*}
\frac{3}{8}|[\ad_{S},T]|^{2} & =|z|^{2}|T|^{2}-\sum_{\mathrm{cyc}}d^{2}_{\alpha}(u_{\beta}-u_{\gamma})^{2}\\
 & \leq|z|^{2}|T|^{2}.
\end{align*}
Meanwhile, $\mathring{T}(S)=0$ and $S\cdot T=4cT$. Therefore 
\begin{equation}
\mathcal{J}_{S}(T)\leq\left(|z|^{2}+\frac{16}{A_{n}}c^{2}\right)|T|^{2}.\label{eq:four-index-joint-bound}
\end{equation}
If $n=4$, we have $\tr S=c=0$ and $|z|^{2}=1$. If $n>4$, the remaining
eigenvalues have sum $-4c$, so Cauchy-Schwarz gives 
\begin{align}
1 & =|z|^{2}+4c^{2}+\sum^{n}_{r=5}a^{2}_{r}\nonumber \\
 & \geq|z|^{2}+\left(4+\frac{16}{n-4}\right)c^{2}=|z|^{2}+\frac{4n}{n-4}c^{2}.\label{eq:four-index-trace-bound}
\end{align}
Since $A_{n}\geq8$, 
\[
\frac{16}{A_{n}}\leq2<\frac{4n}{n-4}.
\]
Equations~\eqref{eq:four-index-joint-bound} and \eqref{eq:four-index-trace-bound}
prove $\mathcal{J}_{S}(T)\leq|T|^{2}$.

Adding the estimates with \eqref{eq:joint-block-additivity} proves
\eqref{eq:intro-joint} for $|S|=1$. Homogeneity gives the statement
for arbitrary $S$.

\smallskip{}
\noindent \emph{Optimality of both coefficients.} The example $(W_{P},S_{P})$
in \eqref{eq:product-splitting}-\eqref{eq:product-Weyl} only has
$W^{[2]}$ components in an eigenbasis of $S_{P}$. Thus $[\ad_{S_{P}},W_{P}]=0$,
while $\mathcal{F}_{S_{P}}(W_{P})=A_{n}|S_{P}|^{2}|W_{P}|^{2}$ by
Section~\ref{sec:sharp-estimates}. This proves optimality of $\frac{1}{A_{n}}$.

For the other coefficient, use 
\[
S=\operatorname{diag}\left(\frac{1}{2},\frac{1}{2},-\frac{1}{2},-\frac{1}{2},0,\ldots,0\right)
\]
and let $W$ have only the four-index summand on $\{1,2,3,4\}$, with
$(u_{1},u_{2},u_{3})=(2,-1,-1)$. The relation $u_{1}+u_{2}+u_{3}=0$
ensures Bianchi, and every Ricci contraction vanishes because the
four indices are distinct. Hence $W$ is an algebraic Weyl tensor.
Here $|S|=1$, $c=0$, and $(d_{1},d_{2},d_{3})=(-2,0,0)$, so 
\begin{align*}
|W|^{2} & =8(4+1+1)=48,\\
|[\ad_{S},W]|^{2} & =8(4\cdot4)=128,\\
S\cdot W & =0,\quad\mathring{W}(S)=0.
\end{align*}
Thus $(\frac{3}{8})|[\ad_{S},W]|^{2}=|S|^{2}|W|^{2}$, proving optimality
of $\frac{3}{8}$. Both examples are eigentensor pairs: their eigenvalues
are $-n$ and $0$, respectively. 
\end{proof}

\section{Proof of the main rigidity theorem}

\label{sec:main-proof}

The joint estimate combines the two representations of $Q$ into a
single new nonnegativity condition. On a complete Einstein manifold,
this bound first gives local symmetry. Comparing with Theorem~\ref{thm:symmetric-classification}
then excludes every nonround symmetric model. The classification used
in the final step is established in Section~\ref{sec:symmetric-classification}
from the eigenvalue calculations in Appendix~\ref{app:symmetric-spectra}.

Throughout this section $n\geq5$, and $A_{n},B_{n},C_{n}$ are the
constants in \eqref{eq:intro-constants}. The bounds $8<A_{n}<12$
and $0<B_{n}<1$ show that 
\[
\frac{4}{3}<\frac{A_{n}+8}{A_{n}+B_{n}}<2,
\]
and hence 
\[
n-1<C_{n}<\frac{3(n-1)}{2}<N:=\frac{(n-1)(n+2)}{2}.
\]

\subsection{A pointwise curvature inequality}

\begin{proposition}\label{prop:pointwise-main} Let $W$ be an algebraic
Weyl tensor on an $n$-dimensional Euclidean space, $n\geq5$, and
set $R=\kappa I+W$ for $\kappa\in\mathbb{R}$. Then 
\begin{equation}
Q(\kappa I+W,W)\geq\frac{4(A_{n}+B_{n})}{3(A_{n}+8)}\Sigma_{C_{n}}(\mathring{R})|W|^{2}.\label{eq:pointwise-main}
\end{equation}
\end{proposition}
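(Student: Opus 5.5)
The plan is to combine the two representations from Proposition~\ref{prop:two-representations} with the weights from the joint estimate, and then apply Lemma~\ref{lem:weight-comparison}. Take the orthonormal eigenbasis $S_i$ of $\mathring{W}$, hence of $\mathring{R}=\kappa\Id+\mathring{W}$, with eigenvalues $\lambda_i=\kappa+\mu_i$ ordered increasingly. Define
\[
w_{i}=\frac{3A_{n}}{8}|[\ad_{S_{i}},W]|^{2}+|S_{i}\cdot W|^{2}+16\mu^{2}_{i}+B_{n}|W|^{2}.
\]
Adding $\frac{3A_n}{8}$ times the first line of \eqref{eq:commutator-identities} to \eqref{eq:second-representation} gives
\[
\sum_i\lambda_i w_i=\Bigl(\frac{3A_n}{4}+6\Bigr)Q(\kappa I+W,W)=\frac{3(A_n+8)}{4}Q(\kappa I+W,W).
\]
Summing the unweighted identities gives
\[
\sum_i w_i=\Bigl(\frac{3A_n}{4}(n-1)+6(n-1)\Bigr)|W|^2=\frac{3(n-1)(A_n+8)}{4}|W|^2.
\]

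Next I bound each weight separately. Since $|S_i|=1$ and $\mathring{W}(S_i)=\mu_iS_i$, we have $16\mu_i^2=16|\mathring{W}(S_i)|^2$. Proposition~\ref{prop:joint-estimate} therefore gives
\[
0\le w_i\le (A_n+B_n)|W|^2=:H.
\]
Nonnegativity is clear because every term in $w_i$ is a square or has a positive coefficient ($B_n>0$ for $n\ge5$). Now assume $W\neq0$, so $H>0$; if $W=0$, both sides of \eqref{eq:pointwise-main} vanish. Then $\sum_i w_i=kH$ with
\[
k=\frac{3(n-1)}{4}\frac{A_n+8}{A_n+B_n}=C_n,
\]
and the remark preceding the proposition shows $1\le C_n<N$. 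Lemma~\ref{lem:weight-comparison} now yields
\[
\frac{3(A_n+8)}{4}Q\ge (A_n+B_n)|W|^2\,\Sigma_{C_n}(\mathring{R}).
\]
Dividing by $\frac{3(A_n+8)}{4}$ gives exactly \eqref{eq:pointwise-main}.

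The step that needs the most care is the bookkeeping of normalizations. First, I must check that the coefficient $\frac{3A_n}{8}$ combines the two representations with matching right-hand sides: the commutator identity carries $2Q$ and the symmetric one carries $6Q$. Second, I must confirm that the weight comparison lemma applies when $C_n$ is fractional. Both points are direct substitutions once the earlier identities are granted, and the genuinely hard inequality is already contained in Proposition~\ref{prop:joint-estimate}.
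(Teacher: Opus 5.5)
Your proof is correct and is essentially the paper's argument. You form the same combined weights $w_i$, bound them by $(A_n+B_n)|W|^2$ via Proposition~\ref{prop:joint-estimate}, compute $\sum_i\lambda_i w_i=\tfrac{3(A_n+8)}{4}Q$ and $\sum_i w_i=C_n(A_n+B_n)|W|^2$ from Proposition~\ref{prop:two-representations}, and conclude with Lemma~\ref{lem:weight-comparison} using $1\le C_n<N$, which is exactly how the paper proceeds.
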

\begin{proof}
The assertion is immediate when $W=0$, so assume $W\ne0$. Choose
an ordered orthonormal eigenbasis $S_{1},\ldots,S_{N}$ of $\mathring{R}$,
with eigenvalues $\lambda_{i}$. Since $\mathring{R}=\kappa\Id+\mathring{W}$,
we also have 
\[
\mathring{W}(S_{i})=\mu_{i}S_{i},\quad\mu_{i}=\lambda_{i}-\kappa.
\]
Adding $B_{n}|W|^{2}$ to the estimate \eqref{eq:intro-joint} gives
the weights 
\[
w_{i}=\frac{3A_{n}}{8}|[\ad_{S_{i}},W]|^{2}+|S_{i}\cdot W|^{2}+16\mu^{2}_{i}+B_{n}|W|^{2},
\]
with 
\[
0\leq w_{i}\leq(A_{n}+B_{n})|W|^{2}.
\]
The two identities in Proposition~\ref{prop:two-representations}
give 
\begin{align*}
\sum_{i}\lambda_{i}w_{i} & =\frac{3(A_{n}+8)}{4}Q(\kappa I+W,W),\\
\sum_{i}w_{i} & =\frac{3(n-1)(A_{n}+8)}{4}|W|^{2}=C_{n}(A_{n}+B_{n})|W|^{2}.
\end{align*}
Thus $C_{n}$ is the total weight divided by the common upper bound
for one weight. Lemma~\ref{lem:weight-comparison} yields 
\[
\frac{3(A_{n}+8)}{4}Q(\kappa I+W,W)\geq(A_{n}+B_{n})\Sigma_{C_{n}}(\mathring{R})|W|^{2},
\]
which proves the proposition. 
\end{proof}

\subsection{Local symmetry and eigenvalue comparison}

Combining \eqref{eq:pointwise-main} with the Bochner identity \eqref{eq:bochner}
gives 
\[
\frac{1}{2}\Delta|W|^{2}\geq|\nabla W|^{2}+\frac{8(A_{n}+B_{n})}{3(A_{n}+8)}\Sigma_{C_{n}}(\mathring{R})|W|^{2}.
\]
The nonnegativity assumption therefore forces $W$ to be parallel
on a closed manifold. The remaining step is geometric: the same hypothesis
shows that the metric is in the open part of the cone for a shifted
operator, whose only locally symmetric models are the round spheres.
\begin{proof}[Proof of Theorem~\ref{thm:main}]
Write $\Ric=(n-1)\kappa g$. Monotonicity of the eigenvalue averages
gives 
\[
0\leq\frac{\Sigma_{C_{n}}(\mathring{R})}{C_{n}}\leq\frac{\tr\mathring{R}}{N}=\kappa.
\]
If $\kappa=0$, the zero-trace observation following \eqref{eq:kyfan},
together with $C_{n}<N$, forces $\mathring{R}=0$ and hence the metric
is flat. 

Suppose $\kappa>0$. Completeness and Bonnet-Myers imply that $M$
is compact. Integrating \eqref{eq:bochner} and using Proposition~\ref{prop:pointwise-main},
we obtain 
\[
0=\int_{M}\bigl(|\nabla W|^{2}+2Q(\kappa I+W,W)\bigr)\,dV_{g}.
\]
Both summands are nonnegative, so $\nabla W=0$. Since $\kappa$ is
constant, $\nabla R=0$, and $g$ is locally symmetric.

Set $k=\frac{n+2}{2}$, so that $k<n-1<C_{n}<N$. The function $l\mapsto\Sigma_{l}(\mathring{R})$
is convex and piecewise affine: its successive slopes are the ordered
eigenvalues. Applying convexity between $k$ and $N$ yields 
\begin{align*}
0\leq\Sigma_{C_{n}}(\mathring{R}) & \leq\frac{N-C_{n}}{N-k}\Sigma_{k}(\mathring{R})+\frac{C_{n}-k}{N-k}\tr\mathring{R},\\
\Sigma_{k}(\mathring{R}) & \geq-\frac{N(C_{n}-k)}{N-C_{n}}\kappa.
\end{align*}
This argument applies without change to fractional indices. Because
\[
\mathring{W}+\beta_{n}\Id=\mathring{R}+\frac{2(n-1)\kappa}{n+2}\Id,
\]
it follows that 
\[
\Sigma_{k}(\mathring{W}+\beta_{n}\Id)\geq\kappa\left(n-1-\frac{N(C_{n}-k)}{N-C_{n}}\right).
\]
The quantity in parentheses is positive precisely when $C_{n}<\frac{3n(n+2)}{2(n+4)}$.
This strict inequality follows from 
\[
\frac{3n(n+2)}{2(n+4)}-C_{n}=\frac{3(n^{2}+n+4)(A_{n}-8)}{4(n+4)(A_{n}+B_{n})}>0,
\]
since $n\geq5$ and $A_{n}>8$. Thus $\mathring{W}+\beta_{n}\Id$
is $k$-positive.

The universal cover is complete, simply connected, locally symmetric,
and Einstein. Theorem~\ref{thm:symmetric-classification} now identifies
it with a round sphere, up to scaling, as all the nonround models
in that theorem have vanishing $k$-sum for the shifted operator.
Hence $g$ has constant positive sectional curvature. Together with
the flat case, this proves the theorem. 
\end{proof}

\section{Böhm-Wilking shifts and their consequences}

\label{sec:shifted}

The maps $\ell_{a,b}$ that create shifted curvature operators are
explored within the framework of Bochner formulas. We first determine
the map that absorbs the Ricci and scalar terms in the Bochner formula
for forms that involves the curvature operator of the second kind.
We then prove the harmonic-form and Ricci estimates and the shifted
Einstein rigidity theorem. The classification of the locally symmetric
models is proved in Section~\ref{sec:symmetric-classification}.

\subsection{The choice of shift}

For $n\geq3$, the Böhm-Wilking map in our curvature convention is
\[
\ell_{a,b}(R)=R+bE\owedge g+\frac{as}{n}g\owedge g,
\]
where $E=\Ric_{R}-\frac{s}{n} g$ is the traceless Ricci tensor. 
It leaves the Weyl part unchanged and multiplies the traceless Ricci
and scalar parts by $1+(n-2)b$ and $1+2(n-1)a$, respectively; compare
\cite[Theorem~2]{BohmWilking}. By definition, $\mathring{R}_{a,b}$
is the curvature operator of the second kind induced by $\ell_{a,b}(R)$.
The contraction identity \eqref{eq:ricci-correction-conventions}
gives 
\[
\mathring{R}_{a,b}=\mathring{R}+bJ_{E}+\frac{2as}{n}\Id.
\]
For the particular parameters in \eqref{eq:intro-shift-parameters}, we have
\begin{align*}
 & \mathring{R}_{\gamma_{n}}=\mathring{R}+\frac{2}{n+4}J_{E}+\frac{2s}{n(n+2)}\Id,\\
 & \tr\mathring{R}_{\gamma_{n}}=\frac{3}{2}s.
\end{align*}
Here $\tr J_{E}=0$, so the trace formula follows from \eqref{eq:second-kind-trace}.
If $\Ric=(n-1)\kappa g$, then $E=0$ and 
\begin{align*}
  \mathring{R}_{\gamma_{n}} &=\mathring{R}+\frac{2(n-1)\kappa}{n+2}\Id=\mathring{W}+\beta_{n}\Id,\\
  \beta_{n} &=\frac{3n\kappa}{(n+2)}.
\end{align*}
The following calculation explains the choice of both parameters:
it cancels the two correction terms simultaneously in the Bochner
formulas for all forms.

\subsection{An exact Bochner identity for differential forms}

Let $Q_{p}(R)$ denote the curvature endomorphism in the Weitzenböck
formula on $p$-forms, 
\[
\Delta_{H}=\nabla^{*}\nabla+Q_{p}(R),
\]
where $\Delta_{H}=d\delta+\delta d$ is nonnegative. For a symmetric
endomorphism $H$, extend the notation $\ad_{H}$ to forms by the
action 
\[
(\ad_{H}\eta)(X_{1},\ldots,X_{p})=\sum^{p}_{j=1}\eta(X_{1},\ldots,HX_{j},\ldots,X_{p}).
\]
Recall that $\gamma_{n}=\frac{2}{n+4}$ and $\mathring{R}_{\gamma_{n}}=\mathring{R}_{\frac{1}{n+2},\gamma_{n}}$.

\begin{proposition}\label{prop:forms-exact-deformation} Let $R$
be an algebraic curvature tensor in dimension $n\geq3$. If $S_{1},\ldots,S_{N}$
is an orthonormal eigenbasis of $\mathring{R}_{\gamma_{n}}$, with
eigenvalues $\lambda_{1},\ldots,\lambda_{N}$, then every $p$-form
$\eta$ satisfies 
\begin{equation}
\frac{3}{2}\langle Q_{p}(R)\eta,\eta\rangle=\frac{3}{2}Q_{p}(R,\eta)=\sum^{N}_{i=1}\lambda_{i}|\ad_{S_{i}}\eta|^{2}.\label{eq:forms-exact-deformation}
\end{equation}
Consequently, on a Riemannian manifold a harmonic $p$-form satisfies
\[
\frac{1}{2}\Delta|\eta|^{2}=|\nabla\eta|^{2}+\frac{2}{3}\sum^{N}_{i=1}\lambda_{i}|\ad_{S_{i}}\eta|^{2}.
\]
\end{proposition}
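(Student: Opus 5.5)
We will start from the Nienhaus--Petersen--Wink Bochner formula for forms, \cite[Proposition~2.1]{NPW}, which writes the Weitzenböck curvature term as a COSK-weighted sum plus explicit Ricci and scalar corrections. In our normalization it should take the form
\[
\tfrac{3}{2}Q_{p}(R,\eta)=\sum_{i}\mathring{\lambda}_{i}|\ad_{S_{i}}\eta|^{2}+c_{1}(n,p)\langle \ad_{E}\eta,\eta\rangle+c_{2}(n,p)\,s|\eta|^{2},
\]
where $\mathring{\lambda}_{i}$ are the eigenvalues of $\mathring{R}$ itself. Rather than quoting the constants, we will fix them by testing on the model curvatures. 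The key observation is that the map $T\mapsto\sum_{a,b}\langle T S_{a},S_{b}\rangle\langle\ad_{S_{a}}\eta,\ad_{S_{b}}\eta\rangle$ is linear in the self-adjoint operator $T$ on $S^{2}_{0}(V)$ and basis independent. Evaluating it at $T=\mathring{R}_{\gamma_{n}}$ in an eigenbasis gives exactly $\sum_i\lambda_i|\ad_{S_i}\eta|^2$. So the right-hand side of \eqref{eq:forms-exact-deformation} is a linear function of $R$, and we may check it on each piece of the orthogonal decomposition $R=W+\frac{1}{n-2}E\owedge g+\frac{s}{n(n-1)}I$ separately.

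\emph{Weyl part.} Here $\mathring{R}_{\gamma_n}$ and $\mathring{R}$ agree, and $Q_p(W)$ is the pure-Weyl term. The identity $\frac32 Q_p(W,\eta)=\sum\mu_i|S_i\cdot\eta|^2$ is the content of \cite[Proposition~2.1]{NPW} for Weyl tensors. This is the same fact used in the proof of Proposition~\ref{prop:two-representations} for $p=2$, where $Q_2(W,\omega)=-2g(W\omega,\omega)$ gives $-3g(W\omega,\omega)$.

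\emph{Scalar part.} For $R=I$ we have $Q_p(I)=p(n-p)\Id$, while $\mathring{R}_{\gamma_n}=(1+\frac{2(n-1)}{n+2})\Id=\frac{3n}{n+2}\Id$. The right side is therefore $\frac{3n}{n+2}\sum_i|\ad_{S_i}\eta|^2$. We need the unweighted contraction $\sum_i|\ad_{S_i}\eta|^2=\frac{(n+2)p(n-p)}{2n}|\eta|^2$. This follows from a Casimir-type computation, summing over an orthonormal basis of $S^2(V)$ and then removing the trace part, whose action is $\frac{p}{\sqrt n}\eta$. For $p=2$ it matches \cite[Lemma~3.7(a)]{NPW}. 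The two sides then agree, since $\frac{3n}{n+2}\cdot\frac{(n+2)p(n-p)}{2n}=\frac32 p(n-p)$.

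\emph{Traceless Ricci part.} Let $R=H\owedge g$ with $H$ traceless. Then $\Ric=(n-2)H$ and $Q_p(R)\eta=(n-2p)\ad_H\eta$ up to normalization, from the standard form $Q_p=\ad_{\Ric}-2\sum R(e_i\wedge e_j)$ acting on forms. On the other side, $\mathring{R}_{\gamma_n}=(1+\frac{2(n-2)}{n+4})J_H=\frac{3n}{n+4}J_H$, so we must show
\[
\sum_{a,b}\langle J_{H}S_{a},S_{b}\rangle\langle\ad_{S_{a}}\eta,\ad_{S_{b}}\eta\rangle=\frac{(n+4)(n-2p)}{2n}\langle\ad_{H}\eta,\eta\rangle .
\]
The left side is $\sum_a\langle\ad_{S_a}\eta,\ad_{HS_a+S_aH}\eta\rangle$ after projecting, and equivariance forces it to be a multiple of $\langle\ad_H\eta,\eta\rangle$. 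We will fix the constant by taking $H$ diagonal and $\eta=e_1\wedge\cdots\wedge e_p$, computing in the basis $e_ie_j$ of symmetric tensors.

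This traceless-Ricci constant is the main obstacle. It is exactly what makes $b=\gamma_n=\frac{2}{n+4}$ the unique choice, so the computation must be done carefully, keeping track of the $\pi_0$ correction $-\frac2n\langle H,S\rangle\Id$. The scalar check similarly pins down $a=\frac1{n+2}$.

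\emph{Harmonic forms.} The consequence for harmonic forms is the standard one. From $\Delta_H=\nabla^*\nabla+Q_p$ and $\Delta_H\eta=0$ we get $\frac12\Delta|\eta|^2=|\nabla\eta|^2+\langle Q_p\eta,\eta\rangle$. Substituting \eqref{eq:forms-exact-deformation} divided by $\frac32$ gives the stated formula.
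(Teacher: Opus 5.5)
Your argument is correct, but it is organized differently from the paper's. The paper takes the full Nienhaus--Petersen--Wink formula \eqref{eq:forms-NPW}, with its explicit Ricci and scalar correction coefficients, and applies it to the shifted tensor $T=\ell_{a,b}(R)$. It then uses $Q_p(H\owedge g)=(n-2p)\ad_H+p(\tr H)\Id$ to obtain the identity \eqref{eq:forms-general-deformation} for arbitrary $(a,b)$. The two correction coefficients vanish exactly at $a=\frac{1}{n+2}$, $b=\frac{2}{n+4}$. You instead use linearity of both sides in $R$ and split $R=W+\frac{1}{n-2}E\owedge g+\frac{s}{n(n-1)}I$. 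This way \cite{NPW} is used only for Weyl tensors, where the correction terms drop out whatever their coefficients are, and you check the scalar and traceless-Ricci pieces by hand.

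The computation you flag as the main obstacle does close. For traceless $H=\operatorname{diag}(h_1,\ldots,h_n)$, the map $X\mapsto HX+XH$ is diagonal in the basis $E_{ii}$, $\frac{1}{\sqrt 2}(E_{ij}+E_{ji})$ of $S^2(V)$, with eigenvalues $2h_i$ and $h_i+h_j$. So for $\eta=e_1\wedge\cdots\wedge e_p$ the unprojected sum over $S^2(V)$ is $2h_I+\frac12\sum_{i\in I,\,j\notin I}(h_i+h_j)=\frac{n+4-2p}{2}h_I$, where $h_I=h_1+\cdots+h_p=\langle\ad_H\eta,\eta\rangle$. Removing the direction $\frac{1}{\sqrt n}\Id$ subtracts $\frac{2p}{n}h_I$, and the $\pi_0$ correction subtracts another $\frac{2p}{n}h_I$. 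This leaves $\frac{(n+4)(n-2p)}{2n}h_I$, as required.

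Two small simplifications are available. For diagonal $H$ all of these quantities are diagonal in the basis $e_I$ of $p$-forms, so you can bypass the equivariance argument entirely. You should also drop the hedge ``up to normalization'': $H\owedge g$ is diagonal on $\bigwedge^2V$ with eigenvalue $h_i+h_j$ on $e_i\wedge e_j$, so $Q_p(H\owedge g)=(n-2p)\ad_H$ exactly.

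Your route does not depend on the precise correction constants in \cite{NPW}; in effect it rederives them. It also makes transparent that the scalar check fixes $a$ and the traceless-Ricci check fixes $b$ (when $n\neq 2p$). The paper's route is shorter once \eqref{eq:forms-NPW} is granted. It also yields the full two-parameter identity \eqref{eq:forms-general-deformation}, which is reused for Corollary~\ref{cor:forms-middle-degree}. Your method would give that identity too if you carried general $(a,b)$ through the two hand checks.
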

\begin{proof}
Write $s=\tr\Ric_{R}$ and $E=\Ric_{R}-\frac{s}{n}g$. The second-kind
Bochner formula of \cite[Proposition~2.1]{NPW}, applied to any algebraic
curvature tensor $T$, reads 
\begin{equation}
\frac{3}{2}Q_{p}(T,\eta)=\sum_{i}\nu_{i}|\ad_{U_{i}}\eta|^{2}
  +\frac{n-2p}{n}\langle\ad_{\Ric_{T}}\eta,\eta\rangle+\frac{p^{2}s_{T}}{n^{2}}|\eta|^{2},\label{eq:forms-NPW}
\end{equation}
where $\mathring{T}(U_{i})=\nu_{i}U_{i}$ and the $U_{i}$ are orthonormal.
The point is to apply this formula to $T=\ell_{a,b}(R)$, and then
choose $a,b$ so that the two correction terms are absorbed.

First, for any symmetric $H$, the Kulkarni-Nomizu convention gives
\begin{equation}
Q_{p}(H\owedge g)=(n-2p)\ad_{H}+p(\tr H)\Id.\label{eq:forms-KN-curvature}
\end{equation}
Indeed, diagonalize $H$ with eigenvalues $h_{1},\ldots,h_{n}$. On
the basis form $e^{i_{1}}\wedge\cdots\wedge e^{i_{p}}$, indexed by
$I=\{i_{1},\ldots,i_{p}\}$, the curvature endomorphism has eigenvalue
\[
\sum_{i\in I,\,j\notin I}(h_{i}+h_{j})=(n-p)\sum_{i\in I}h_{i}+p\sum_{j\notin I}h_{j}=(n-2p)\sum_{i\in I}h_{i}+p\sum^{n}_{j=1}h_{j}.
\]
In particular, $Q_{p}(I)=p(n-p)\Id$.

Now use 
\[
T=\ell_{a,b}(R)=R+bE\owedge g+\frac{2as}{n}I.
\]
Linearity, \eqref{eq:forms-KN-curvature}, and the Ricci contractions
in Section~\ref{sec:preliminaries} give 
\begin{align*}
Q_{p}(T) & =Q_{p}(R)+b(n-2p)\ad_{E}+\frac{2as}{n}p(n-p)\Id,\\
\Ric_{T} & =\bigl(1+(n-2)b\bigr)E+\bigl(1+2(n-1)a\bigr)\frac{s}{n}g,\\
s_{T} & =\bigl(1+2(n-1)a\bigr)s.
\end{align*}
Substitute these expressions into \eqref{eq:forms-NPW}, using $\ad_{g}=p\Id$
on $p$-forms, and move the last two terms to the left. Collecting
the traceless Ricci and scalar contributions yields 
\begin{align}
\sum_{i}\nu_{i}|\ad_{U_{i}}\eta|^{2}={} & \frac{3}{2}Q_{p}(R,\eta)
  +\frac{n-2p}{n}\left(\frac{n+4}{2}b-1\right)\langle\ad_{E}\eta,\eta\rangle\nonumber \\
 & +\frac{p(n-p)}{n^{2}}\bigl((n+2)a-1\bigr)s|\eta|^{2}.\label{eq:forms-general-deformation}
\end{align}
For $a=\frac{1}{n+2}$ and $b=\frac{2}{n+4}$, both coefficients vanish,
independently of $p$. This proves \eqref{eq:forms-exact-deformation}.
The harmonic-form identity follows from the Weitzenböck formula with
our stated Laplacian convention. 
\end{proof}

\subsection{Harmonic forms and Ricci curvature}
\begin{proof}[Proof of Theorem~\ref{thm:shifted-harmonic}]
For $1\leq p\leq n-1$, the symmetric action has the individual and
total bounds 
\begin{align}
|\ad_{S}\eta|^{2} & \leq\frac{p(n-p)}{n}|S|^{2}|\eta|^{2},\quad S\in S^{2}_{0}(V),\nonumber \\
\sum^{N}_{i=1}|\ad_{S_{i}}\eta|^{2} & =\frac{p(n-p)(n+2)}{2n}|\eta|^{2}.\label{eq:forms-action-weights}
\end{align}
These are proven in \cite[Lemma~3.7]{NPW}. We briefly explain how
the constants arise. For $|S|=1$, diagonalize $S$ with eigenvalues
$a_{i}$. Since $\sum_{i}a_{i}=0$, every $p$-element subset $I$
satisfies 
\[
\left(\sum_{i\in I}a_{i}\right)^{2}=\left\langle a,\mathbf{1}_{I}-\frac{p}{n}\mathbf{1}\right\rangle ^{2}\leq\left|\mathbf{1}_{I}-\frac{p}{n}\mathbf{1}\right|^{2}=\frac{p(n-p)}{n}.
\]
Expanding $\eta$ in the corresponding basis proves the first bound.
For the second, apply \eqref{eq:forms-exact-deformation} to $R=I$:
since $Q_{p}(I)=p(n-p)\Id$ and 
\[
\mathring{I}_{\gamma_{n}}=\frac{3n}{n+2}\Id,
\]
the assertion follows.

Take an ordered orthonormal eigenbasis of $\mathring{R}_{\gamma_{n}}$.
For $\eta\ne0$, the weights $|\ad_{S_{i}}\eta|^{2}$ have upper bound
$\frac{p(n-p)}{n}|\eta|^{2}$, and their total is $\frac{n+2}{2}$
times this bound. Lemma~\ref{lem:weight-comparison} and \eqref{eq:forms-exact-deformation}
therefore give 
\begin{equation}
Q_{p}(R,\eta)\geq\frac{2p(n-p)}{3n}\Sigma_{\frac{n+2}{2}}\bigl(\mathring{R}_{\gamma_{n}}\bigr)|\eta|^{2}.\label{eq:forms-spectral-bound}
\end{equation}
The same inequality is immediate when $\eta=0$.

Under the theorem's nonnegativity assumption, $Q_{p}(R)$ is nonnegative.
If $\eta$ is harmonic, integrating the Bochner formula for $\eta$
on the closed manifold gives 
\[
0=\int_{M}\bigl(|\nabla\eta|^{2}+Q_{p}(R,\eta)\bigr)\,dV_{g}.
\]
Both terms are nonnegative, so $\eta$ is parallel. If the spectral
sum is positive at one point, \eqref{eq:forms-spectral-bound} forces
this parallel form to vanish there, hence everywhere. Harmonic functions
and top-degree forms are parallel as well. The cohomology statements
follow from Hodge theory and, in the oriented case, Poincaré duality. 
\end{proof}

\begin{proposition}\label{prop:shifted-ricci} Every Riemannian manifold
of dimension $n\geq3$ satisfies 
\begin{equation}
\Ric\geq\frac{2(n-1)}{3n}\Sigma_{\frac{n+2}{2}}\bigl(\mathring{R}_{\gamma_{n}}\bigr)g.\label{eq:forms-ricci-bound}
\end{equation}
If the eigenvalue sum is positive everywhere and the manifold is closed
and connected, then the fundamental group is finite and the universal
covering is a rational homology sphere. Under this strict condition,
a closed, connected manifold of dimension three or four is diffeomorphic
to a spherical space form. \end{proposition}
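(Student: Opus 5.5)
The Ricci bound \eqref{eq:forms-ricci-bound} is the case $p=1$ of the spectral estimate \eqref{eq:forms-spectral-bound}. On $1$-forms the Weitzenböck curvature term is $Q_{1}(R)=\Ric$, acting on covectors via $g$. So for a unit covector $\eta$, \eqref{eq:forms-spectral-bound} gives
\[
\Ric(\eta^{\sharp},\eta^{\sharp})=Q_{1}(R,\eta)\geq\frac{2(n-1)}{3n}\Sigma_{\frac{n+2}{2}}\bigl(\mathring{R}_{\gamma_{n}}\bigr).
\]
This is exactly \eqref{eq:forms-ricci-bound}. The estimate is purely pointwise and algebraic: it only uses Proposition~\ref{prop:forms-exact-deformation}, the weight bounds \eqref{eq:forms-action-weights} and Lemma~\ref{lem:weight-comparison}. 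Hence it holds on every Riemannian manifold with no global hypothesis.

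For the topological part, assume the sum is positive everywhere on a closed connected $M$. Compactness gives a uniform positive lower bound, so $\Ric\geq cg$ with $c>0$. Bonnet--Myers then shows that the universal cover $\widetilde M$ is compact, so $\pi_{1}(M)$ is finite. The condition is local, so it lifts to $\widetilde M$, which is closed, connected, simply connected and hence oriented. Theorem~\ref{thm:shifted-harmonic} applied to $\widetilde M$ gives $H^{p}(\widetilde M;\mathbb{R})=0$ for $1\leq p\leq n-1$. Together with $H^{0}=H^{n}=\mathbb{R}$, this says $\widetilde M$ is a rational homology sphere.

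In dimension three, positive Ricci curvature already suffices: Hamilton's theorem shows that $M$ is diffeomorphic to a spherical space form. In dimension four I would use the case $p=2$ of \eqref{eq:forms-spectral-bound}, which gives $Q_{2}(R)>0$ as a quadratic form on $\bigwedge^{2}$. The step I expect to be the main obstacle is translating this into positive isotropic curvature. The plan is to restrict $Q_{2}(R,\omega)=g(\Ric\circ\omega+\omega\circ\Ric-2R(\omega),\omega)$ to $\bigwedge^{\pm}$. The traceless Ricci part of $R$ interchanges $\bigwedge^{+}$ and $\bigwedge^{-}$, so it drops out of the quadratic form on each factor. What remains is, up to a positive normalizing constant, $\frac{s}{6}\Id-W^{\pm}$ on $\bigwedge^{\pm}$, possibly with a different coefficient in front of $s$. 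I would then compare this, eigenvalue by eigenvalue, with the Micallef--Wang characterization of positive isotropic curvature in dimension four, which is a condition on the two smallest eigenvalues of $\frac{s}{6}\Id-W^{\pm}$ (or an equivalent normalization). The point to check carefully is that positivity of the full operator on each $\bigwedge^{\pm}$ implies that condition; I expect this to hold with room to spare, since positive definiteness controls every partial sum of eigenvalues.

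Granting positive isotropic curvature, I would finish with the classification of closed four-manifolds with positive isotropic curvature due to Hamilton and Chen--Tang--Zhu. Such a manifold is diffeomorphic to a connected sum of spherical space forms and quotients of $S^{3}\times\mathbb{R}$. Finiteness of $\pi_{1}$ rules out nontrivial connected sums and the $S^{3}\times S^{1}$-type pieces, since these all have infinite fundamental group. What remains is a spherical space form, as claimed. The vanishing $b_{2}(\widetilde M)=0$ from the previous step is consistent with this conclusion and can serve as a check.
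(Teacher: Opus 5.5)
Your proposal follows the paper's proof step for step, with one exception: the passage from $Q_2(R)>0$ to positive isotropic curvature in dimension four. The matching steps are the Ricci bound as the case $p=1$ of \eqref{eq:forms-spectral-bound}, Bonnet--Myers together with Theorem~\ref{thm:shifted-harmonic} on the universal cover, and Hamilton in dimension three. The closing four-dimensional classification also matches; you spell it out via Chen--Tang--Zhu and free products where the paper cites Li's argument.

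For the PIC step the paper is more direct. For any orthonormal four-frame it evaluates $Q_2$ on the unit form $\eta=(e^1\wedge e^2+e^3\wedge e^4)/\sqrt{2}$ and obtains $Q_2(R,\eta)=R_{1313}+R_{1414}+R_{2323}+R_{2424}-2R_{1234}$. This is literally the isotropic curvature of the frame, so it needs no $\bigwedge^{\pm}$ splitting and no Micallef--Wang criterion.

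Your route also works, and the points you left open settle exactly:
\begin{itemize}
\item By \eqref{eq:forms-KN-curvature} with $n=2p=4$, the traceless Ricci part contributes nothing to $Q_2$ at all, not merely to its restrictions to $\bigwedge^{\pm}$. Hence $Q_2(R)=\frac{s}{3}\Id-2W=2\bigl(\frac{s}{6}\Id-W^{+}\bigr)\oplus2\bigl(\frac{s}{6}\Id-W^{-}\bigr)$, with precisely the coefficient $\frac{s}{6}$.
\item The Micallef--Wang criterion is positive definiteness of $\frac{s}{6}\Id-W^{\pm}$ itself. Equivalently, the two smallest eigenvalues of $W^{\pm}+\frac{s}{12}\Id$ have positive sum. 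It is not a condition on two eigenvalues of $\frac{s}{6}\Id-W^{\pm}$.
\end{itemize}
So there is no room to spare: in dimension four, $Q_2(R)>0$ is equivalent to PIC. That equivalence is what your version buys. The paper's evaluation yields only the needed implication, but in one line and without any eigenvalue bookkeeping.
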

\begin{proof}
Since $Q_{1}(R)=\Ric$, inequality \eqref{eq:forms-ricci-bound} is
the case $p=1$ of \eqref{eq:forms-spectral-bound}; no sign assumption
is required. Under strict positivity, compactness gives a positive
uniform Ricci lower bound. Bonnet-Myers implies that the fundamental
group is finite and the universal cover is compact. The lifted metric
on a covering manifold satisfies the same strict spectral condition.
The universal cover is orientable, so Theorem~\ref{thm:shifted-harmonic}
makes it a rational homology sphere.

In dimension three, Hamilton's theorem for positive Ricci curvature
\cite{Hamilton1982} gives the differentiable conclusion. In dimension
four, \eqref{eq:forms-spectral-bound} also makes $Q_{2}(R)$ positive
definite. For any orthonormal four-frame, the unit two-form 
$$\eta=\frac{e^{1}\wedge e^{2}+e^{3}\wedge e^{4}}{\sqrt{2}}$$
satisfies 
\[
Q_{2}(R,\eta)=R_{1313}+R_{1414}+R_{2323}+R_{2424}-2R_{1234}.
\]
Thus the metric has positive isotropic curvature. Together with the
fact that the fundamental group is finite, the four-dimensional classification
shows that it is a spherical space form; see \cite[proof of Theorem~1.5]{Li2025}. 
\end{proof}

\subsection{The middle degree}

When $n=2p$, the traceless Ricci correction in \eqref{eq:forms-general-deformation}
vanishes for every $b$. Taking $a=\frac{1}{n+2}$ and $b=-\frac{1}{n-2}$
removes the traceless Ricci part of $R$ itself and gives the following
additional consequence.

\begin{corollary}\label{cor:forms-middle-degree} Suppose $n=2p\geq4$.
If $U_{i}$ is an orthonormal eigenbasis of 
\[
\mathring{W}+\frac{3s}{(n-1)(n+2)}\Id
\]
with eigenvalues $\nu_{i}$, then every $p$-form satisfies 
\[
\frac{3}{2}Q_{p}(R,\eta)=\sum_{i}\nu_{i}|\ad_{U_{i}}\eta|^{2}
\]
and 
\[
Q_{\frac{n}{2}}(R,\eta)\geq\frac{n}{6}\Sigma_{\frac{n+2}{2}}\left(\mathring{W}+\frac{3s}{(n-1)(n+2)}\Id\right)|\eta|^{2}.
\]
On a closed, connected manifold, nonnegativity of this eigenvalue
sum makes every harmonic middle-degree form parallel. If, in addition,
the sum is positive at one point, then $H^{\frac{n}{2}}(M;\mathbb{R})=0$.
\end{corollary}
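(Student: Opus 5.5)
The plan is to repeat the proof of Proposition~\ref{prop:forms-exact-deformation} with a different choice of Böhm--Wilking parameters, and then run the weight comparison exactly as in the proof of Theorem~\ref{thm:shifted-harmonic}.

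\emph{Step 1 (choice of shift).} Start from the general identity \eqref{eq:forms-general-deformation}, which holds for every $a,b$ and every $p$-form. Its traceless Ricci correction carries the factor $\frac{n-2p}{n}$, so it vanishes identically when $n=2p$, whatever $b$ is. Take $a=\frac{1}{n+2}$, which kills the scalar correction as before, and $b=-\frac{1}{n-2}$. Since $\ell_{a,b}$ multiplies the traceless Ricci part by $1+(n-2)b=0$ and the scalar part by $1+2(n-1)a=\frac{3n}{n+2}$, the decomposition $R=W+\frac{1}{n-2}E\owedge g+\frac{s}{n(n-1)}I$ gives
\[
\ell_{a,b}(R)=W+\frac{3s}{(n-1)(n+2)}I .
\]
Using $\mathring{I}=\Id$, its curvature operator of the second kind is therefore $\mathring{W}+\frac{3s}{(n-1)(n+2)}\Id$. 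Substituting into \eqref{eq:forms-general-deformation}, with $U_i$ and $\nu_i$ the eigenbasis and eigenvalues of this operator, gives the exact identity $\frac{3}{2}Q_p(R,\eta)=\sum_i\nu_i|\ad_{U_i}\eta|^2$. The main care needed is the bookkeeping that both correction coefficients vanish simultaneously. Beyond that there is no real obstacle, since every computation was already carried out for general $(a,b)$.

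\emph{Step 2 (spectral bound).} Use the individual and total bounds \eqref{eq:forms-action-weights} with $p=\frac{n}{2}$. Each weight $|\ad_{U_i}\eta|^2$ is at most $\frac{p(n-p)}{n}|\eta|^2=\frac{n}{4}|\eta|^2$, and the total weight is $\frac{n+2}{2}$ times this bound. Lemma~\ref{lem:weight-comparison} with $k=\frac{n+2}{2}<N$ then yields
\[
\tfrac{3}{2}Q_{n/2}(R,\eta)\geq \tfrac{n}{4}\,\Sigma_{\frac{n+2}{2}}|\eta|^2,
\]
which is the claimed constant $\frac{n}{6}$. The case $\eta=0$ is trivial.

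\emph{Step 3 (global consequences).} This is identical to the proof of Theorem~\ref{thm:shifted-harmonic}. For a harmonic middle-degree form, integrate the Weitzenböck formula over the closed manifold, which gives $0=\int_M\bigl(|\nabla\eta|^2+Q_{n/2}(R,\eta)\bigr)\,dV_g$. Under the nonnegativity hypothesis both integrands are nonnegative, so $\nabla\eta=0$. A parallel form has constant norm. If the sum is positive at one point, the spectral bound forces $\eta$ to vanish there, hence everywhere. Hodge theory then gives $H^{n/2}(M;\mathbb{R})=0$.
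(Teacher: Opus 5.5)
Your proposal is correct and follows the paper's proof essentially verbatim. It uses the same parameters $a=\frac{1}{n+2}$, $b=-\frac{1}{n-2}$ in \eqref{eq:forms-general-deformation}, with the traceless Ricci term killed by $n=2p$. It then applies the weight bounds \eqref{eq:forms-action-weights} with $\frac{p(n-p)}{n}=\frac{n}{4}$ to get the constant $\frac{n}{6}$, and finishes with the same integrated Bochner argument for the harmonic-form and cohomology conclusions.
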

\begin{proof}
The Weyl decomposition gives 
\[
\ell_{\frac{1}{n+2},-\frac{1}{n-2}}(R)=W+\frac{3s}{(n-1)(n+2)}I.
\]
The exact identity follows from \eqref{eq:forms-general-deformation}.
Applying the weight bounds \eqref{eq:forms-action-weights} yields
the stated estimate, since $\frac{2p(n-p)}{3n}=\frac{n}{6}$. The
conclusions for harmonic forms follow by the same integrated Bochner
argument used above. 
\end{proof}

\subsection{A shifted Einstein rigidity criterion}

\label{subsec:shifted-einstein-rigidity}

We now prove Theorem~\ref{thm:shifted-improved}. The shift in that
theorem is adapted to the symmetric-action representation of the curvature
term. Throughout this subsection, $n\geq4$ and 
\[
\alpha_{n}=\frac{2(n-1)(n+8)}{nA_{n}}.
\]
Since $8\leq A_{n}<12$, we have 
\[
1<\alpha_{n}\leq\frac{(n-1)(n+8)}{4n}<\frac{n+2}{2}<N:=\frac{(n-1)(n+2)}{2}.
\]
The strict middle inequality is equivalent to $n^{2}-3n+8>0$.

The comparison of indices stated in the introduction follows from
\[
\frac{8(n-2)(2n-1)}{n(n-1)}-A_{n}=\frac{4((n-2)^{3}-n)}{(n-1)(n-2)^{2}}>0.
\]

\begin{proposition}\label{prop:shifted-einstein-Q} Let $W$ be an
algebraic Weyl tensor on an $n$-dimensional Euclidean space, $n\geq4$,
and let $\kappa\in\mathbb{R}$. Then 
\begin{equation}
Q(\kappa I+W,W)\geq\frac{A_{n}}{6}\Sigma_{\alpha_{n}}\left(\mathring{W}+\frac{3n\kappa}{n+8}\Id\right)|W|^{2}.\label{eq:shifted-einstein-Q-bound}
\end{equation}
\end{proposition}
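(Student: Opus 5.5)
The plan is to use only the symmetric-action representation from Proposition~\ref{prop:two-representations}, with the weights chosen so that no constant $B_n|W|^2$ term appears. Instead, the $\kappa$-contribution is absorbed into a shift of the eigenvalues. Fix an orthonormal eigenbasis $S_i$ of $\mathring{W}$ with eigenvalues $\mu_i$. This is also an eigenbasis of $\mathring{W}+\frac{3n\kappa}{n+8}\Id$, with eigenvalues $\nu_i=\mu_i+\frac{3n\kappa}{n+8}$.

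Set
\[
\omega'_i=|S_i\cdot W|^2+16\mu_i^2 .
\]
The proof of Proposition~\ref{prop:two-representations} already records the identities
\[
\sum_i\omega'_i=\frac{2(n-1)(n+8)}{n}|W|^2,\qquad \sum_i\mu_i\omega'_i=-36\tr(W^3)+12\tr((\mathring{W})^3)=6Q(W,W).
\]
Combining these with \eqref{eq:Q-polynomial}, $6Q(\kappa I+W,W)=6(n-1)\kappa|W|^2+6Q(W,W)$. The first identity lets us write
\[
6(n-1)\kappa|W|^2=\frac{3n\kappa}{n+8}\sum_i\omega'_i ,
\]
and therefore
\[
\sum_i\nu_i\,\omega'_i=6Q(\kappa I+W,W).
\]
This exact cancellation is what dictates the shift $\frac{3n\kappa}{n+8}$.

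Next comes the weight bound. Proposition~\ref{prop:sharp-four-slot} with $|S_i|=1$ gives
\[
0\le\omega'_i\le A_n|W|^2 .
\]
The total weight divided by this bound is
\[
\frac{2(n-1)(n+8)}{nA_n}=\alpha_n .
\]
When $W\neq0$, we have $1<\alpha_n<N$, as checked just before the proposition, so Lemma~\ref{lem:weight-comparison} applies with $H=A_n|W|^2$ and $k=\alpha_n$. It gives
\[
6Q(\kappa I+W,W)\ge A_n|W|^2\,\Sigma_{\alpha_n}\Bigl(\mathring{W}+\frac{3n\kappa}{n+8}\Id\Bigr),
\]
which is \eqref{eq:shifted-einstein-Q-bound}. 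The case $W=0$ is trivial.

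The only delicate point is the bookkeeping of the constant term. We must verify that $\sum_i\omega'_i$, without the $B_n$ term, equals $\frac{2(n-1)(n+8)}{n}|W|^2$. This follows from $\sum_i|S_i\cdot W|^2=\frac{2(n^2+n-8)}{n}|W|^2$ together with $16\cdot\frac34|W|^2=12|W|^2$. We must also confirm that the ratio $\frac{6(n-1)}{2(n-1)(n+8)/n}$ is exactly $\frac{3n}{n+8}$, which it is. Everything else is a direct application of results already proved.
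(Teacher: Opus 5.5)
Your proposal is correct and follows the paper's proof essentially line for line. Both use the unshifted weights $|S_i\cdot W|^2+16\mu_i^2$ and absorb the $6(n-1)\kappa|W|^2$ term into the eigenvalue shift $\frac{3n\kappa}{n+8}$. Both then bound each weight by $A_n|W|^2$ via Proposition~\ref{prop:sharp-four-slot} and apply Lemma~\ref{lem:weight-comparison} with $k=\alpha_n$, after ordering the eigenvalues increasingly, which is only a relabeling.
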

\begin{proof}
The assertion is immediate when $W=0$, so assume $W\ne0$. Choose
an orthonormal eigenbasis $S_{1},\ldots,S_{N}$ of $\mathring{W}$,
with increasingly ordered eigenvalues $\mu_{i}$, and put 
\[
\xi_{i}=\mu_{i}+\frac{3n\kappa}{n+8},\quad w_{i}=|S_{i}\cdot W|^{2}+16\mu^{2}_{i}.
\]
The contraction identities \eqref{eq:weyl-traces}-\eqref{eq:slot-cubic}
give 
\begin{align*}
\sum_{i}w_{i} & =\frac{2(n-1)(n+8)}{n}|W|^{2},\\
\sum_{i}\mu_{i}w_{i} & =-36\tr(W^{3})+12\tr((\mathring{W})^{3})=6Q(W,W).
\end{align*}
Consequently, by \eqref{eq:Q-polynomial}, 
\[
\sum_{i}\xi_{i}w_{i}=6Q(\kappa I+W,W),\quad\sum_{i}w_{i}=\alpha_{n}A_{n}|W|^{2}.
\]
Proposition~\ref{prop:sharp-four-slot} gives $0\leq w_{i}\leq A_{n}|W|^{2}$.
Applying Lemma~\ref{lem:weight-comparison} therefore proves \eqref{eq:shifted-einstein-Q-bound}. 
\end{proof}

\begin{proof}[Proof of Theorem~\ref{thm:shifted-improved}]
Write $\Ric=(n-1)\kappa g$, where $\kappa$ is constant, and put
\[
T=\mathring{W}+\frac{3n\kappa}{n+8}\Id.
\]
This is the operator in the hypothesis. Since $\tr\mathring{W}=0$,
monotonicity of eigenvalue averages gives 
\[
0\leq\frac{\Sigma_{\alpha_{n}}(T)}{\alpha_{n}}\leq\frac{\tr T}{N}=\frac{3n\kappa}{n+8}.
\]
Thus $\kappa\geq0$. If $\kappa=0$, then $\tr T=0$ and $\alpha_{n}<N$
force $T=\mathring{W}=0$ by the zero-trace observation following
\eqref{eq:kyfan}. Equation~\eqref{eq:weyl-traces} gives $W=0$,
so the metric is flat.

Suppose $\kappa>0$. Bonnet-Myers gives compactness, and Proposition~\ref{prop:shifted-einstein-Q}
makes $Q(\kappa I+W,W)$ nonnegative. Integrating \eqref{eq:bochner}
therefore gives 
\[
0=\int_{M}\bigl(|\nabla W|^{2}+2Q(\kappa I+W,W)\bigr)\,dV_{g}.
\]
It follows that $\nabla W=0$, and hence $\nabla R=0$.

For $n\geq5$, set $k=\frac{n+2}{2}$. The bound $\alpha_{n}<k$ and
monotonicity of eigenvalue averages imply $\Sigma_{k}(T)\geq0$. Moreover,
\[
\mathring{W}+\beta_{n}\Id=T+\frac{18n\kappa}{(n+2)(n+8)}\Id.
\]
The added scalar is positive, so $\Sigma_{k}(\mathring{W}+\beta_{n}\Id)>0$.
Applying Theorem~\ref{thm:symmetric-classification} to the universal
cover now gives constant positive sectional curvature.

For $n=4$, we have $\alpha_{4}=\frac{9}{4}$ and $T=\mathring{R}$.
The compact simply connected locally symmetric Einstein models in
this dimension are $S^{4}$, $\mathbb{CP}^{2}$, and $S^{2}\times S^{2}$.
Normalize $\Ric=\frac{1}{2}g$. The eigenvalues in Appendix~\ref{app:symmetric-spectra}
give, respectively for the two nonround models, 
\begin{align*}
\Sigma_{\frac{9}{4}}(\mathring{R}_{\mathbb{CP}^{2}}) & =\frac{9}{4}\left(-\frac{1}{6}\right)=-\frac{3}{8},\\
\Sigma_{\frac{9}{4}}(\mathring{R}_{S^{2}\times S^{2}}) & =-\frac{1}{2}.
\end{align*}
Both violate the hypothesis, so the universal cover is round in this
case as well. Together with the flat case, this proves the theorem. 
\end{proof}

\section{The locally symmetric models}

\label{sec:symmetric-classification}

We prove Theorem~\ref{thm:symmetric-classification}, completing
the spectral classification used in the two rigidity arguments above.
The proof uses only the COSK eigenvalues of symmetric spaces and the
known rigidity theorem of Dai-Fu \cite{DaiFu}. The eigenvalues and
their multiplicity calculations can be found in Appendix~\ref{app:symmetric-spectra}.
We use them here through a simple test involving just one or two negative
eigenspaces.
\begin{proof}[Proof of Theorem~\ref{thm:symmetric-classification}]
Put $k=\frac{n+2}{2}$ and 
\[
F=\Sigma_{k}(\mathring{W}+\beta_{n}\Id)=\Sigma_{k}\left(\mathring{R}+\frac{2(n-1)\kappa}{n+2}\Id\right).
\]
The trace of the shifted operator is $\frac{3n(n-1)\kappa}{2}$. Since
the average of the lowest $k$ eigenvalues does not exceed the full
average, $F\geq0$ implies $\kappa\geq0$. If $\kappa=0$, the zero-trace
observation following \eqref{eq:kyfan} implies $\mathring{W}+\beta_{n}\Id=0$,
because $k<\dim S^{2}_{0}(T_{x}M)$. Thus $\mathring{W}=0$, and \eqref{eq:weyl-traces}
gives $W=0$. The metric is flat, so completeness and simple connectivity
identify $M$ with Euclidean space.

Assume henceforth that $\kappa>0$. Bonnet-Myers makes $M$ compact.
A complete simply connected locally symmetric manifold is globally
symmetric, and its de Rham factors are compact irreducible symmetric
spaces with the same Einstein constant $\rho=(n-1)\kappa$.

\smallskip{}
\noindent \emph{Reducible spaces.} Write $M=M^{p}_{1}\times M^{q}_{2}$,
allowing either factor initially to be reducible. Positive Ricci curvature
implies $p,q\geq2$. Set $c=\frac{2\rho}{n+2}$. Both factors are
Einstein metrics with Einstein constant $\rho$. Thus the trace free
block diagonal symmetric 2-tensor $S_{12}=\frac{1}{p}g_{1}-\frac{1}{q}g_{2}$
has $\mathring{R}$-eigenvalue $-\rho$, and the $pq$-dimensional
mixed subspace has eigenvalue zero. Since $pq\geq k-1$, the variational
principle \eqref{eq:kyfan} gives 
\[
0\leq F\leq(c-\rho)+(k-1)c=0.
\]
If one factor had a negative second-kind eigenvalue $\lambda$, extend
its trace-free eigenvector to be zero to the other factor. This is
an eigenvector of the product, orthogonal to both subspaces just used.
Assigning weight one to it and to the block-metric line, and total
weight $k-2$ to mixed tensors, would give 
\[
F\leq(c-\rho)+(c+\lambda)+(k-2)c=\lambda<0.
\]
Thus both factors have nonnegative curvature operator of the second
kind. In dimension at least four, the Einstein rigidity theorem of
Dai-Fu \cite{DaiFu} makes each factor a space of constant curvature;
in dimensions two and three this follows directly from the Einstein
condition. The factors are complete and simply connected, so they
are round spheres. Conversely, an Einstein product of two round spheres
has precisely the eigenvalue $-\rho$, eigenvalue 0 on the mixed subspace,
and positive eigenvalues on the two trace-free factor subspaces. Hence
its actual lowest $k$ sum is $(c-\rho)+(k-1)c=0$.

\smallskip{}
\noindent \emph{The exclusion test for irreducible spaces.} Normalize
$\rho=\frac{1}{2}$ and write $d=\dim M$. Then 
\[
F=\Sigma_{\frac{d+2}{2}}(\mathring{R})+\frac{1}{2}.
\]
An eigenvalue $\lambda<0$ occurring on a subspace of dimension $m$
therefore excludes a space as soon as 
\begin{equation}
m\geq\frac{d+2}{2},\quad(d+2)|\lambda|>1.\label{eq:symmetric-exclusion}
\end{equation}
Indeed, choose a symmetric endomorphism $P$ with $0\leq P\leq\Id$
and $\tr P=\frac{d+2}{2}$, supported on that eigenspace. The variational
formula then gives $F\leq\frac{(d+2)\lambda}{2}+\frac{1}{2}<0$. The
eigenvalue need not be the lowest eigenvalue.

For $SU(r)$ ($r\geq3$), $SU(r)/SO(r)$ ($r\geq3$), and $SU(2r)/Sp(r)$
($r\geq3$), the eigenvalue $-\frac{1}{4}$ has multiplicity $d\geq5$,
so \eqref{eq:symmetric-exclusion} applies. For the other classical
families, use the following entries of Appendix~\ref{app:symmetric-spectra}:
the $S^{2}_{0}\mathbb{R}^{r}$ summand for $\operatorname{Spin}(r)$;
the $\omega_{2}$ summand for $Sp(r)$; the $\bigwedge^{2}\mathbb{R}^{p}\otimes\bigwedge^{2}\mathbb{R}^{q}$
summand for real Grassmannians; the summand associated with the larger
factor for unequal complex and quaternionic Grassmannians, and both
such summands when the factors have equal dimension; and the adjoint
$SU(r)$ summand for $Sp(r)/U(r)$ and $SO(2r)/U(r)$.

For completeness, the inequalities in \eqref{eq:symmetric-exclusion}
can be checked without estimating the remaining spectrum. For $\operatorname{Spin}(r)$,
$r\geq6$, one has $m\geq d\geq15$ and $|\lambda|\geq\frac{1}{8}$.
For $Sp(r)$, $r\geq3$, 
\[
m-\frac{d+2}{2}=\frac{2r^{2}-3r-4}{2}>0,\quad|\lambda|>\frac{1}{4}.
\]
For real Grassmannians with $p,q\geq3$, $(p-1)(q-1)\geq4$ gives
$m\geq d$, and $d+2>p+q-2$; when $p=2$, $q\geq4$ gives $\frac{q(q-1)}{2}\geq q+1$
and $\frac{2q+2}{q}>1$. For complex Grassmannians with $2\leq p<q$
and quaternionic Grassmannians with $1\leq p<q$, respectively, 
\begin{align*}
m-\frac{d+2}{2} & =q(q-p)-2>0, & (d+2)|\lambda|-1 & =\frac{q(p^{2}-1)}{p+q}>0,\\
m-\frac{d+2}{2} & =2q(q-p)-q-2\geq0, & (d+2)|\lambda|-1 & =\frac{q(2p(p+1)-1)}{p+q+1}>0.
\end{align*}
For equal parameters $p=q=r\geq2$, the multiplicity differences are
$r^{2}-3$ and $2r^{2}-2r-3$, and $|\lambda|\geq\frac{1}{4}$. For
$Sp(r)/U(r)$, $r\geq3$, and $SO(2r)/U(r)$, $r\geq4$, they are
$\frac{r^{2}-r-4}{2}$ and $\frac{r^{2}+r-4}{2}$; their dimensions
are at least twelve and $|\lambda|\geq\frac{1}{6}$.

Two small nonspherical cases require using two negative eigenspaces.
For $Sp(2)\simeq\operatorname{Spin}(5)$, $d=10$, the eigenvalues
$-\frac{1}{3}$ and $-\frac{1}{12}$ occur with multiplicities five
and fourteen. The modules are $\bigwedge^{4}\mathbb{R}^{5}$ and $S^{2}_{0}\mathbb{R}^{5}$.
Consequently 
\[
F\leq5\left(-\frac{1}{3}\right)-\frac{1}{12}+\frac{1}{2}=-\frac{5}{4}<0.
\]
For $SO(5)/(SO(2)SO(3))$, locally isometric to $Sp(2)/U(2)$, $d=6$,
and eigenvalues $-\frac{1}{3}$ and $-\frac{1}{6}$ occur on $\bigwedge^{2}\mathbb{R}^{2}\otimes\bigwedge^{2}\mathbb{R}^{3}$
and $S^{2}_{0}\mathbb{R}^{2}$, of dimensions three and two. Thus
\[
F\leq3\left(-\frac{1}{3}\right)-\frac{1}{6}+\frac{1}{2}=-\frac{2}{3}<0.
\]

For every exceptional compact group, the negative eigenvalue and its
multiplicity in Table~\ref{tab:appendix-groups} satisfy \eqref{eq:symmetric-exclusion}.
The same is true of the lowest eigenvalue in each row of Table~\ref{tab:appendix-exceptional}.
In all twelve quotient cases, $m\geq\frac{d+2}{2}$ and $(d+2)|\lambda|\geq\frac{5}{2}$;
equality in the latter bound occurs for $G_{2}/SO(4)$. Thus all exceptional
cases are excluded, including the Cayley plane.

By the classification of compact irreducible symmetric spaces, the
remaining cases are round spheres and complex projective spaces. The
low-parameter identifications include $SU(4)/Sp(2)=S^{5}$ and $SO(6)/U(3)=\mathbb{CP}^{3}$;
the real Grassmannian with $(p,q)=(2,2)$ is reducible and was already
treated. On a round sphere, 
\[
F=\frac{d+2}{2}\left(\frac{\rho}{d-1}+\frac{2\rho}{d+2}\right)=\frac{3d\rho}{2(d-1)}>0.
\]
On $\mathbb{CP}^{a}$, $d=2a$, with $\rho=\frac{1}{2}$, the lowest
second-kind eigenvalue is $-\frac{1}{2(a+1)}=-\frac{1}{d+2}$, of
multiplicity $a^{2}-1\geq a+1$ for $a\geq2$; the other eigenvalue
is positive. Thus the lowest eigenvalue of the shifted operator is
zero with multiplicity at least $k$, and $F=0$. These calculations
establish both directions and the stated boundary values. 
\end{proof}

\appendix
%dummy comment inserted by tex2lyx to ensure that this paragraph is not empty

\section{COSK eigenvalues of compact symmetric spaces}

\label{app:symmetric-spectra}

We calculate the complete spectrum of $\mathring{R}$ on every compact
irreducible symmetric space, with the uniform normalization 
\begin{equation}
\Ric=\frac{1}{2}g.\label{eq:appendix-normalization}
\end{equation}
The COSK calculations go back to Koiso \cite[Section~7]{Koiso1980}.
We include the isotropy decompositions and the real multiplicities,
both to fix the normalization and to make the eigenvalue tests in
Section~\ref{sec:symmetric-classification} explicit. We also correct
a factor-of-two error in the type-$A$ group entry of Koiso's table;
see Remark~\ref{rem:Koiso-correction}. These tables complement the
spectra of the ordinary curvature operator on $\bigwedge^{2}$ computed
by Petersen-Stanfield \cite{PetersenStanfield2025} for all compact
irreducible symmetric spaces, with the same normalization $\Ric=\frac{1}{2}g$.
The last subsection gives direct calculations for Einstein products
and complex projective spaces.

The tables also apply to finite coverings and locally isometric quotients
as they clearly have the same spectrum. Finite central quotients in
the notation $G/K$ are therefore suppressed. Every multiplicity is
a real dimension, and the eigenvalue $-\frac{1}{2}$ that comes from
the metric is excluded. An entry $\lambda^{[m]}$ means eigenvalue
$\lambda$ with multiplicity $m$. There is no one-dimensional example
satisfying \eqref{eq:appendix-normalization}.

\subsection{The curvature-Casimir identity}

\label{sec:appendix-casimir}

Write the effective transvection algebra as $\mathfrak{g}=\mathfrak{k}\oplus\mathfrak{p}$
and equip $\mathfrak{p}$ with $-B_{\mathfrak{g}}$. This is precisely
the metric \eqref{eq:appendix-normalization}. If $u_{a}$ is an orthonormal
basis of $\mathfrak{k}$ for the restricted form $-B_{\mathfrak{g}}$,
put $A_{a}=\ad(u_{a})|_{\mathfrak{p}}$. These endomorphisms are skew-adjoint.
The positive isotropy Casimir operator on symmetric endomorphisms
is 
\[
C_{K}(H)=-\sum_{a}[A_{a},[A_{a},H]].
\]
The symmetric-space curvature formula gives $\widehat{R}(H)=\sum_{a}A_{a}HA_{a}$,
while its Ricci contraction gives $-\sum_{a}A^{2}_{a}=\frac{1}{2}\Id$.
Expanding the commutators therefore yields 
\[
C_{K}(H)=-\sum_{a}(A^{2}_{a}H+HA^{2}_{a}-2A_{a}HA_{a})=H+2\widehat{R}(H).
\]
Consequently, 
\begin{equation}
\mathring{R}=\frac{1}{2}(C_{K}-\Id)\quad\text{on }S^{2}_{0}\mathfrak{p},\quad\lambda_{i}=\frac{c_{i}-1}{2},\label{eq:appendix-casimir}
\end{equation}
where $c_{i}$ correspond to the eigenvalues of $C_{K}$ and $\lambda_{i}$
are the eigenvalues of $\mathring{R}$. Thus the calculation consists
of decomposing $S^{2}_{0}\mathfrak{p}$ into irreducible components
and noting that on each summand the Casimir operator is given by $c\Id$.
The summand coming from the metric has Casimir eigenvalues $c=0$ and curvature eigenvalue $-\frac{1}{2}$ and is removed from the considerations.

We recall how to recover the numerical data. For a simple ideal $\mathfrak{k}_{i}$
we have that $t_{i}B_{\mathfrak{g}}|_{\mathfrak{k}_{i}}=B_{\mathfrak{k}_{i}}$.
If $\mu_{i}$ is a highest weight, $\delta_{i}$ is half the sum of
its positive roots, and $\theta_{i}$ is its highest root, then 
\begin{equation}
c_{i}=t_{i}\frac{(\mu_{i},\mu_{i}+2\delta_{i})}{(\theta_{i},\theta_{i}+2\delta_{i})},\quad\dim V_{\mu_{i}}=\prod_{\alpha>0}\frac{(\mu_{i}+\delta_{i},\alpha)}{(\delta_{i},\alpha)}.\label{eq:appendix-highest-weights}
\end{equation}
Casimir operators add on tensor products of different isotropy factors;
a central factor contributes the squared norm of its weight for the
restricted metric. Dimensions multiply on tensor products and add
on direct sums. The complex dimension of a complexified real eigenspace
equals its original real dimension. In particular, if $\mathfrak{p}_{\mathbb{C}}=U\oplus U^{*}$,
then 
\[
S^{2}_{0}\mathfrak{p}_{\mathbb{C}}=S^{2}U\oplus S^{2}U^{*}\oplus(U\otimes U^{*})_{0}.
\]
A summand of $S^{2}U$ and its conjugate contribute twice its complex
dimension to the real multiplicity. Fundamental weights below use
Bourbaki numbering.

Two useful checks, with $d=\dim M$, are 
\begin{equation}
\sum m_{\lambda_{i}}=\frac{(d-1)(d+2)}{2},\quad\sum m_{\lambda_{i}}\lambda_{i}=\frac{d+2}{4}.\label{eq:appendix-checks}
\end{equation}
For the second identity, the trace of $H\mapsto AHA$ on $S^{2}$
is $-\frac{1}{2}\tr A^{2}$ for skew-adjoint $A$. Thus $\tr_{S^{2}}\widehat{R}=\frac{d}{4}$;
deleting the metric eigenvalue $-\frac{1}{2}$ gives the asserted
trace. As a normalization check, the round $d$-sphere has the single
eigenvalue $\frac{1}{2(d-1)}$, of multiplicity $\frac{(d-1)(d+2)}{2}$.
If a reference metric has Einstein constant $\rho_{0}$, then the
eigenvalues must be divided by $2\rho_{0}$ to obtain \eqref{eq:appendix-normalization}.

\subsection{Compact simple Lie groups}

\label{sec:appendix-groups}

For a compact simple group $H$, the symmetric presentation $(H\times H)/\operatorname{diag}H$
gives the bi-invariant metric $g=-\frac{1}{2}B_{\mathfrak{h}}$. Let
$C_{H}$ denote the Casimir operator for $-B_{\mathfrak{h}}$, so that its
eigenvalue on the adjoint representation is one. In this notation
\begin{equation}
\mathring{R}=\frac{1}{4}(C_{H}-2\Id).\label{eq:appendix-group-casimir}
\end{equation}
Indeed, for a $-B_{\mathfrak{h}}$-orthonormal basis with adjoint
endomorphisms $A_{a}$, one has $C_{H}(T)=2T+2\sum_{a}A_{a}TA_{a}$,
whereas the curvature of the metric $-\frac{1}{2}B_{\mathfrak{h}}$
gives $\widehat{R}(T)=\frac{1}{2}\sum_{a}A_{a}TA_{a}$.

The symmetric-square decompositions of the adjoint representation
(cf.\ \cite{LandsbergManivel2006}) give the following complete list.
\begingroup 
\small
\setlength{\tabcolsep}{5pt}
\renewcommand{\arraystretch}{1.65}
%\begingroup {\small\setlength{\tabcolsep}{5pt} 
%\global\long\def\arraystretch{1.65}%
% }{\small{}%
\begin{longtable}[c]{|l|l|c|c|} 
\caption{Compact simple groups, normalized by $\Ric=\frac{1}{2}g$.}\label{tab:appendix-groups}
\tabularnewline
\hline 
{\small Group } & {\small$d$ } & {\small Eigenvalue } & {\small Multiplicity}\tabularnewline
\endfirsthead
\hline 
\multicolumn{4}{c}{{\small Table~\thetable\ (continued)}}\tabularnewline
\hline 
{\small Group } & {\small$d$ } & {\small Eigenvalue } & {\small Multiplicity}\tabularnewline
\endhead
\hline 
{\small$SU(2)$ } & {\small$3$ } & {\small$\frac{1}{4}$ } & {\small$5$}\tabularnewline
\hline 
{\small$SU(r)$, $r\geq3$ } & {\small$r^{2}-1$ } & {\small$-\frac{1}{4}$ } & {\small$r^{2}-1$}\tabularnewline
{\small{*}} &  & {\small$-\frac{1}{2r}$ } & {\small$\frac{r^{2}(r+1)(r-3)}{4}$}\tabularnewline
{\small{*}} &  & {\small$\frac{1}{2r}$ } & {\small$\frac{r^{2}(r-1)(r+3)}{4}$}\tabularnewline
\hline 
{\small$\operatorname{Spin}(r)$ } & {\small$\frac{r(r-1)}{2}$ } & {\small$-\frac{1}{r-2}$ } & {\small$\binom{r}{4}$}\tabularnewline
{\small{*}$r\geq5$ } &  & {\small$-\frac{r-4}{4(r-2)}$ } & {\small$\frac{(r-1)(r+2)}{2}$}\tabularnewline
{\small{*}} &  & {\small$\frac{1}{2(r-2)}$ } & {\small$\frac{r(r+1)(r+2)(r-3)}{12}$}\tabularnewline
\hline 
{\small$Sp(r)$, $r\geq2$ } & {\small$r(2r+1)$ } & {\small$-\frac{r+2}{4(r+1)}$ } & {\small$(r-1)(2r+1)$}\tabularnewline
{\small{*}} &  & {\small$-\frac{1}{4(r+1)}$ } & {\small$\frac{r(r-1)(2r-1)(2r+3)}{3}$}\tabularnewline
{\small{*}} &  & {\small$\frac{1}{2(r+1)}$ } & {\small$\binom{2r+3}{4}$}\tabularnewline
\hline 
{\small$G_{2}$ } & {\small$14$ } & {\small$-\frac{5}{24},\ \frac{1}{8}$ } & {\small$27,\ 77$}\tabularnewline
\hline 
{\small$F_{4}$ } & {\small$52$ } & {\small$-\frac{5}{36},\ \frac{1}{18}$ } & {\small$324,\ 1053$}\tabularnewline
\hline 
{\small$E_{6}$ } & {\small$78$ } & {\small$-\frac{1}{8},\ \frac{1}{24}$ } & {\small$650,\ 2430$}\tabularnewline
\hline 
{\small$E_{7}$ } & {\small$133$ } & {\small$-\frac{1}{9},\ \frac{1}{36}$ } & {\small$1539,\ 7371$}\tabularnewline
\hline 
{\small$E_{8}$ } & {\small$248$ } & {\small$-\frac{1}{10},\ \frac{1}{60}$ } & {\small$3875,\ 27000$}\tabularnewline
\hline 
\end{longtable}
\par
\endgroup
%\end{longtable}}{\small\endgroup}{\small\par}

{\small Here is a verification of the entries. For $SU(r)$ the three
summands, in table order, have highest weights $\theta$, $\omega_{2}+\omega_{r-2}$,
and $2\theta$. Their $C_{H}$ eigenvalues are $1$, $2-\frac{2}{r}$,
and $2+\frac{2}{r}$. The middle summand is absent for $r=3$ and
has weight $2\omega_{2}$ for $r=4$. The eigenvalue $-\frac{1}{4}$
on the adjoint summand follows directly from \eqref{eq:appendix-group-casimir};
in particular, this value does not depend on a choice of root normalization.}{\small\par}

{\small For $\operatorname{Spin}(r)$ the summands are $\bigwedge^{4}\mathbb{R}^{r}$,
$S^{2}_{0}\mathbb{R}^{r}$, and the space of algebraic Weyl tensors
on $\mathbb{R}^{r}$. Their Casimir eigenvalues are $2-\frac{4}{r-2}$,
$\frac{r}{r-2}$, and $2+\frac{2}{r-2}$. For $Sp(r)$ the highest
weights are $\omega_{2}$, $2\omega_{2}$, and $4\omega_{1}$, with
Casimir eigenvalues $\frac{r}{r+1}$, $2-\frac{1}{r+1}$, and $2+\frac{2}{r+1}$.
Formula~\eqref{eq:appendix-group-casimir} now gives all the classical
eigenvalues; the Weyl dimension formula gives the displayed multiplicities.}{\small\par}

{\small For each exceptional group the trace-free symmetric square
has two summands. The positive summand is the Cartan square of the
adjoint, with Casimir eigenvalues $2+\frac{2}{h^{\vee}}$, where $h^{\vee}=4,9,12,18,30$
for $G_{2},F_{4},E_{6},E_{7},E_{8}$, respectively. The other summand
has Casimir eigenvalues $\frac{7}{6},\frac{13}{9},\frac{3}{2},\frac{14}{9},\frac{8}{5}$,
respectively. Applying \eqref{eq:appendix-group-casimir} gives the
two eigenvalues. Their multiplicities can also be recovered from \eqref{eq:appendix-checks}:
if the eigenvalues are $\lambda_{-}<0<\lambda_{+}$, then 
\[
m_{-}=\frac{\frac{1}{2}(d-1)(d+2)\lambda_{+}-\frac{d+2}{4}}{\lambda_{+}-\lambda_{-}},\quad m_{+}=\frac{(d-1)(d+2)}{2}-m_{-}.
\]
}{\small\par}

{\small The low-rank identifications $Sp(1)\simeq SU(2)\simeq\operatorname{Spin}(3)$,
$Sp(2)\simeq\operatorname{Spin}(5)$, and $SU(4)\simeq\operatorname{Spin}(6)$
agree with the table. The group $\operatorname{Spin}(4)$ is reducible.
At $r=8$, the two negative orthogonal-group entries merge: the spectrum
is $(-\frac{1}{6})^{[105]}$, $(\frac{1}{12})^{[300]}$. Although
$\bigwedge^{4}\mathbb{R}^{8}$ splits into two summands, both have
the same Casimir eigenvalue, so no additional eigenvalue appears.}{\small\par}

{\small\begin{remark}[Correction of Koiso's type-$A$ group entry]
\label{rem:Koiso-correction} In the group-type $A_{r}$ row of \cite[p.~69]{Koiso1980},
corresponding to $SU(r+1)$ with $r\geq2$, the curvature eigenvalue
on the adjoint summand is printed as $-(r+1)$, and the Einstein constant
is $\varepsilon=r+1$. The eigenvalue should be $-\frac{r+1}{2}$.
Indeed, the adjoint Casimir operator satisfies $C_{H}=\Id$, so \eqref{eq:appendix-group-casimir}
gives the normalized eigenvalue $-\frac{1}{4}$. Multiplication by
$2\varepsilon$ recovers the corrected entry in Koiso's scale.\par The
trace identity gives an independent check. For $SU(3)$, $S^{2}_{0}\mathfrak{su}(3)$
decomposes into modules of dimensions $8$ and $27$, and the positive
eigenvalue is $\frac{1}{6}$. Rescaling the printed entry would give
$-\frac{1}{2}$ on the first module, whereas 
\[
8\left(-\frac{1}{2}\right)+27\left(\frac{1}{6}\right)=\frac{1}{2}\ne\frac{8+2}{4}.
\]
The corrected eigenvalue gives $8(-\frac{1}{4})+27(\frac{1}{6})=\frac{5}{2}$,
exactly the trace required by \eqref{eq:appendix-checks}. Table~\ref{tab:appendix-groups}
uses this corrected value. \end{remark}}{\small\par}

\subsection{Classical irreducible quotients}

{\small\label{sec:appendix-classical}}{\small\par}

{\small All metrics in this subsection satisfy $\Ric=\frac{1}{2}g$.
The multiplicities are real dimensions. In each display, omit a row
of multiplicity zero and add the multiplicities of coincident eigenvalues.
We write $d=\dim M$ and suppress finite ineffective subgroups of
the isotropy group. The modules below are complexified; their complex
dimensions give the real multiplicities, with both members of a conjugate
pair included. Each spectrum follows from \eqref{eq:appendix-casimir},
namely $\lambda=\frac{c-1}{2}$ for an isotropy-Casimir eigenvalue
$c$.}{\small\par}

{\small\paragraph{BDI: real Grassmannians.} Let 
\[
M=SO(p+q)/(SO(p)\times SO(q)),\quad d=pq,\quad D=p+q-2,
\]
where $p,q\geq1$, $p+q\geq3$, and $(p,q)\ne(2,2)$. The excluded
pair is locally a product of two round spheres. Set $s_{r}=\frac{(r-1)(r+2)}{2}$
and $a_{r}=\frac{r(r-1)}{2}$. Then 
\[
{
\global\long\def\arraystretch{1.5}%
\begin{array}{|c|c|}
\hline \text{Eigenvalue} & \text{Multiplicity}\\[3pt]
\hline \frac{2-q}{2D} & s_{p}\\[3pt]
\hline \frac{2-p}{2D} & s_{q}\\[3pt]
\hline -\frac{1}{D} & a_{p}a_{q}\\[3pt]
\hline \frac{1}{D} & s_{p}s_{q}\\[3pt]
\hline \end{array}}
\]
To see this, put $E=\mathbb{C}^{p}$, $F=\mathbb{C}^{q}$, and let
$\mathcal{H}^{j}E$ denote the trace-free symmetric tensors of degree
$j$. Since $\mathfrak{p}_{\mathbb{C}}=E\otimes F$, the identity
\[
S^{2}(E\otimes F)=(S^{2}E\otimes S^{2}F)\oplus(\sideset{}{^{2}}\bigwedge E\otimes\sideset{}{^{2}}\bigwedge F)
\]
gives, after deleting the metric, 
\[
S^{2}_{0}\mathfrak{p}_{\mathbb{C}}=\mathcal{H}^{2}E\oplus\mathcal{H}^{2}F\oplus(\sideset{}{^{2}}\bigwedge E\otimes\sideset{}{^{2}}\bigwedge F)\oplus(\mathcal{H}^{2}E\otimes\mathcal{H}^{2}F).
\]
In the metric induced from $-B_{\mathfrak{so}(p+q)}$, the Casimir
operator on an $SO(r)$ tensor module of highest weight $\mu$ is
\[
c_{r}(\mu)=\frac{\sum_{i}\mu_{i}(\mu_{i}+r-2i)}{2D}.
\]
Thus $c(\mathcal{H}^{2}\mathbb{C}^{r})=\frac{r}{D}$ and $c(\bigwedge^{2}\mathbb{C}^{r})=\frac{r-2}{D}$;
the Casimir operators of the two factors add. The dimensions are $s_{r}$
and $a_{r}$, respectively. Any splitting of these modules in small
dimensions leaves the displayed eigenvalues and total multiplicities
unchanged. If one parameter is one, the table reduces to the round-sphere
eigenvalue $\frac{1}{2(d-1)}$. For $p,q\geq2$, coincidences occur
precisely when $p=q$, $p=4$, or $q=4$.}{\small\par}

{\small\paragraph{AIII: complex Grassmannians.} For $p,q\geq1$,
put 
\[
M=SU(p+q)/S(U(p)\times U(q)),\quad d=2pq,\quad P=p+q.
\]
The spectrum is 
\[
{
\global\long\def\arraystretch{1.5}%
\begin{array}{|c|c|}
\hline \text{Eigenvalue} & \text{Multiplicity}\\[3pt]
\hline -\frac{q}{2P} & p^{2}-1\\[3pt]
\hline -\frac{p}{2P} & q^{2}-1\\[3pt]
\hline 0 & (p^{2}-1)(q^{2}-1)\\[3pt]
\hline -\frac{1}{P} & \frac{pq(p-1)(q-1)}{2}\\[3pt]
\hline \frac{1}{P} & \frac{pq(p+1)(q+1)}{2}\\[3pt]
\hline \end{array}}
\]
Write $U=E\otimes F^{*}$, where $E=\mathbb{C}^{p}$ and $F=\mathbb{C}^{q}$.
The Hermitian isotropy decomposition is 
\begin{equation}
S^{2}_{0}\mathfrak{p}_{\mathbb{C}}=S^{2}U\oplus S^{2}U^{*}\oplus(U\otimes U^{*})_{0}.\label{eq:appendix-hermitian-decomposition}
\end{equation}
Here 
\begin{align*}
S^{2}U & =(S^{2}E\otimes S^{2}F^{*})\oplus(\bigwedge^{2}E\otimes\bigwedge^{2}F^{*}),\\
(U\otimes U^{*})_{0} & =\mathfrak{sl}(E)\oplus\mathfrak{sl}(F)\oplus(\mathfrak{sl}(E)\otimes\mathfrak{sl}(F)).
\end{align*}
For a highest weight $(u;v)$ with $\sum_{i}u_{i}+\sum_{j}v_{j}=0$,
the induced Casimir operator is 
\[
c(u;v)=\frac{\sum_{i}u_{i}(u_{i}+p+1-2i)+\sum_{j}v_{j}(v_{j}+q+1-2j)}{2P}.
\]
The three Hermitian summands have Casimir eigenvalues $\frac{p}{P}$,
$\frac{q}{P}$, and $1$. The two summands of $S^{2}U$ have Casimir
eigenvalues $1+\frac{2}{P}$ and $1-\frac{2}{P}$, respectively; their
conjugates double their dimensions. This gives all five rows. If $p$
or $q$ are 1, this is the complex-projective-space spectrum. For
$p,q\geq2$, negative eigenvalues coincide precisely when $p=q$,
$p=2$, or $q=2$; there are five distinct eigenvalues exactly when
$p,q\geq3$ and $p\ne q$.}{\small\par}

{\small\paragraph{CII: quaternionic Grassmannians.} Let 
\[
M=Sp(p+q)/(Sp(p)\times Sp(q)),\quad d=4pq,\quad L=p+q+1,
\]
where $p,q\geq1$. Set $b_{r}=(r-1)(2r+1)$ and $z_{r}=r(2r+1)$.
Then 
\[
{
\global\long\def\arraystretch{1.5}%
\begin{array}{|c|c|}
\hline \text{Eigenvalue} & \text{Multiplicity}\\[3pt]
\hline -\frac{q+1}{2L} & b_{p}\\[3pt]
\hline -\frac{p+1}{2L} & b_{q}\\[3pt]
\hline -\frac{1}{2L} & b_{p}b_{q}\\[3pt]
\hline \frac{1}{2L} & z_{p}z_{q}\\[3pt]
\hline \end{array}}
\]
Take $E=\mathbb{C}^{2p}$ and $F=\mathbb{C}^{2q}$ with their symplectic
forms. Since $\mathfrak{p}_{\mathbb{C}}=E\otimes F$ and $\bigwedge^{2}E=\mathbb{C}\oplus\bigwedge^{2}_{0}E$,
\[
S^{2}_{0}\mathfrak{p}_{\mathbb{C}}=\sideset{}{_{0}^{2}}\bigwedge E\oplus\sideset{}{_{0}^{2}}\bigwedge F\oplus(\sideset{}{_{0}^{2}}\bigwedge E\otimes\sideset{}{_{0}^{2}}\bigwedge F)\oplus(S^{2}E\otimes S^{2}F).
\]
The induced $Sp(r)$ Casimir operator is 
\[
c_{r}(\mu)=\frac{\sum_{i}\mu_{i}(\mu_{i}+2r+2-2i)}{4L}.
\]
Its values on $\bigwedge^{2}_{0}\mathbb{C}^{2r}$ and $S^{2}\mathbb{C}^{2r}$
are $\frac{r}{L}$ and $\frac{r+1}{L}$, while the dimensions are
$b_{r}$ and $z_{r}$. This proves the table. The first two rows merge
when $p=q$. A parameter equal to one gives a quaternionic projective
space; $(p,q)=(1,1)$ gives the round four-sphere.}{\small\par}

{\small\paragraph{AI: $SU(r)/SO(r)$.} For $r\geq3$, set $d=\frac{(r-1)(r+2)}{2}$.
The complete spectrum is 
\[
{
\global\long\def\arraystretch{1.5}%
\begin{array}{|c|c|}
\hline \text{Eigenvalue} & \text{Multiplicity}\\[3pt]
\hline -\frac{1}{4} & \frac{(r-1)(r+2)}{2}\\[3pt]
\hline -\frac{1}{2r} & \frac{r(r+1)(r+2)(r-3)}{12}\\[3pt]
\hline \frac{1}{r} & \frac{r(r+1)(r-1)(r+6)}{24}\\[3pt]
\hline \end{array}}
\]
For the standard orthogonal module $E=\mathbb{C}^{r}$, $\mathfrak{p}_{\mathbb{C}}=\mathcal{H}^{2}E$
and 
\[
S^{2}_{0}(\mathcal{H}^{2}E)=\mathcal{H}^{2}E\oplus\mathcal{W}(E)\oplus\mathcal{H}^{4}E.
\]
Here $\mathcal{W}(E)$ is the algebraic Weyl-tensor module, of highest
weight $(2,2)$, and $\mathcal{H}^{4}E$ has dimension $\binom{r+3}{4}-\binom{r+1}{2}$.
The Casimir operator 
\[
c(\mu)=\frac{\sum_{i}\mu_{i}(\mu_{i}+r-2i)}{4r}
\]
gives $\frac{1}{2}$, $1-\frac{1}{r}$, and $1+\frac{2}{r}$ on these
three summands. At $r=3$ the Weyl-tensor summand is absent; at $r=4$
it splits into two summands with the same Casimir eigenvalue. The case
$r=2$ is the round two-sphere, with eigenvalue $\frac{1}{2}$ of
multiplicity two.}{\small\par}

{\small\paragraph{AII: $SU(2r)/Sp(r)$.} For $r\geq3$, set $d=(r-1)(2r+1)$.
Then 
\[
{
\global\long\def\arraystretch{1.5}%
\begin{array}{|c|c|}
\hline \text{Eigenvalue} & \text{Multiplicity}\\[3pt]
\hline -\frac{1}{4} & (r-1)(2r+1)\\[3pt]
\hline -\frac{1}{2r} & \binom{2r}{4}-\binom{2r}{2}\\[3pt]
\hline \frac{1}{4r} & \frac{r(r-1)(2r-1)(2r+3)}{3}\\[3pt]
\hline \end{array}}
\]
For the standard symplectic module $E=\mathbb{C}^{2r}$, $\mathfrak{p}_{\mathbb{C}}=\bigwedge^{2}_{0}E$,
and symplectic contraction gives 
\[
S^{2}_{0}(\sideset{}{_{0}^{2}}\bigwedge E)=\sideset{}{_{0}^{2}}\bigwedge E\oplus\sideset{}{_{0}^{4}}\bigwedge E\oplus V_{(2,2)}.
\]
The middle summand is present only for $r\geq4$; $V_{(2,2)}$ has
highest weight $2e_{1}+2e_{2}$ and the dimension in the last row.
The Casimir formula 
\[
c(\mu)=\frac{\sum_{i}\mu_{i}(\mu_{i}+2r+2-2i)}{8r}
\]
gives $\frac{1}{2}$, $1-\frac{1}{r}$, and $1+\frac{1}{2r}$, respectively.
For $r=2$, only the last summand remains: the space is the round
five-sphere, with eigenvalue $\frac{1}{8}$ of multiplicity fourteen.
The case $r=1$ is a point and is excluded.}{\small\par}

{\small\paragraph{CI: $Sp(r)/U(r)$.} For $r\geq1$, the dimension
is $d=r(r+1)$, and 
\[
{
\global\long\def\arraystretch{1.5}%
\begin{array}{|c|c|}
\hline \text{Eigenvalue} & \text{Multiplicity}\\[3pt]
\hline -\frac{r+2}{4(r+1)} & r^{2}-1\\[3pt]
\hline 0 & \frac{r^{2}(r-1)(r+3)}{4}\\[3pt]
\hline -\frac{1}{2(r+1)} & \frac{r^{2}(r^{2}-1)}{6}\\[3pt]
\hline \frac{1}{r+1} & 2\binom{r+3}{4}\\[3pt]
\hline \end{array}}
\]
In \eqref{eq:appendix-hermitian-decomposition}, take $U=S^{2}E$,
$E=\mathbb{C}^{r}$. For $r\geq2$, 
\begin{align*}
S^{2}U & =S^{4}E\oplus S_{(2,2)}E,\\
(U\otimes U^{*})_{0} & =\mathfrak{sl}(E)\oplus V_{2e_{1}-2e_{r}}.
\end{align*}
The symbol $S_{(2,2)}E$ denotes the Schur module of partition $(2,2)$,
of dimension $\frac{r^{2}(r^{2}-1)}{12}$. The last Hermitian summand
has dimension $[\frac{r(r+1)}{2}]^{2}-r^{2}$. For the induced $U(r)$
metric, 
\[
c(\mu)=\frac{\sum_{i}\mu_{i}(\mu_{i}+r+1-2i)}{4(r+1)}.
\]
The weights $4e_{1}$, $2e_{1}+2e_{2}$, $e_{1}-e_{r}$, and $2e_{1}-2e_{r}$
have Casimir eigenvalues $\frac{r+3}{r+1}$, $\frac{r}{r+1}$, $\frac{r}{2(r+1)}$,
and $1$. Doubling the dimensions of the first two summands accounts
for their conjugates. At $r=1$ only the last row survives, giving
the round two-sphere; for $r\geq2$ all four eigenvalues are distinct.}{\small\par}

{\small\paragraph{DIII: $SO(2r)/U(r)$.} For $r\geq3$, the dimension
is $d=r(r-1)$, and 
\[
{
\global\long\def\arraystretch{1.5}%
\begin{array}{|c|c|}
\hline \text{Eigenvalue} & \text{Multiplicity}\\[3pt]
\hline -\frac{r-2}{4(r-1)} & r^{2}-1\\[3pt]
\hline 0 & \frac{r^{2}(r+1)(r-3)}{4}\\[3pt]
\hline -\frac{1}{r-1} & 2\binom{r}{4}\\[3pt]
\hline \frac{1}{2(r-1)} & \frac{r^{2}(r^{2}-1)}{6}\\[3pt]
\hline \end{array}}
\]
Now $U=\bigwedge^{2}E$ in \eqref{eq:appendix-hermitian-decomposition}.
For $r\geq4$, 
\begin{align*}
S^{2}U & =S_{(2,2)}E\oplus\sideset{}{^{4}}\bigwedge E,\\
(U\otimes U^{*})_{0} & =\mathfrak{sl}(E)\oplus V_{e_{1}+e_{2}-e_{r-1}-e_{r}}.
\end{align*}
The last summand has dimension $[\frac{r(r-1)}{2}]^{2}-r^{2}$. The
induced Casimir operator is 
\[
c(\mu)=\frac{\sum_{i}\mu_{i}(\mu_{i}+r+1-2i)}{4(r-1)}.
\]
On the two summands of $S^{2}U$ its values are $\frac{r}{r-1}$ and
$\frac{r-3}{r-1}$; on the two Hermitian summands they are $\frac{r}{2(r-1)}$
and $1$. Include the conjugates of $S^{2}U$ to obtain the displayed
multiplicities. At $r=3$ only the first and last rows remain, agreeing
with $\mathbb{CP}^{3}$. The negative rows merge at $r=6$. The case
$r=2$ is the round two-sphere, with eigenvalue $\frac{1}{2}$ of
multiplicity two.}{\small\par}

{\small In every permitted case, the displayed multiplicities and eigenvalues
satisfy both identities in \eqref{eq:appendix-checks}.}{\small\par}

\subsection{Exceptional symmetric spaces of type I}

{\small\label{sec:appendix-exceptional}}{\small\par}

{\small We use compact forms and suppress finite central quotients
of the isotropy groups. All metrics have $\Ric=\frac{1}{2}g$. In
Table~\ref{tab:appendix-exceptional}, $\lambda^{[m]}$ denotes an
eigenvalue and its }{\small\emph{real}}{\small{} multiplicity. In particular,
the metric line, whose eigenvalue is $-\frac{1}{2}$, is not included.}{\small\par}

\begingroup 
\small
\setlength{\tabcolsep}{4pt}
\renewcommand{\arraystretch}{1.3}
%{\small\begingroup \setlength{\tabcolsep}{4pt} 
%\global\long\def\arraystretch{1.3}%
% }{\small{}%
\begin{longtable}[c]{|c|c|}
\caption{Complete exceptional type-I spectra on $S^{2}_{0}(T^{*}M)$.}\label{tab:appendix-exceptional}
\tabularnewline
\hline 
{\small Type, space, and dimension } & {\small Spectrum}\tabularnewline
\endfirsthead
\hline 
\multicolumn{2}{@{}l}{{\small\tablename~\thetable\ (continued)}}\tabularnewline
\hline 
{\small Type, space, and dimension } & {\small Spectrum}\tabularnewline
\endhead
\hline 
{\small EI: $E_{6}/Sp(4)$, $d=42$ } & {\small$(-\frac{1}{8})^{[308]},\ (\frac{1}{12})^{[594]}$}\tabularnewline
\hline 
\begin{cellvarwidth}[t]
{\small EII: $E_{6}/(SU(6)Sp(1))$}\\
{\small$d=40$ }
\end{cellvarwidth} & {\small$(-\frac{1}{6})^{[105]},\ (-\frac{1}{12})^{[189]},\ (\frac{1}{12})^{[525]}$}\tabularnewline
\hline 
\begin{cellvarwidth}[t]
{\small EIII: $E_{6}/(\operatorname{Spin}(10)U(1))$}\\
{\small$d=32$ }
\end{cellvarwidth} & \begin{cellvarwidth}[t]
{\small$(-\frac{1}{4})^{[20]},\ (-\frac{1}{6})^{[45]},$}\\
{\small$0^{[210]},\ (\frac{1}{12})^{[252]}$}
\end{cellvarwidth}\tabularnewline
\hline 
{\small EIV: $E_{6}/F_{4}$, $d=26$ } & {\small$(-\frac{1}{4})^{[26]},\ (\frac{1}{24})^{[324]}$}\tabularnewline
\hline 
{\small EV: $E_{7}/SU(8)$, $d=70$ } & {\small$(-\frac{1}{9})^{[720]},\ (\frac{1}{18})^{[1764]}$}\tabularnewline
\hline 
\begin{cellvarwidth}[t]
{\small EVI: $E_{7}/(\operatorname{Spin}(12)Sp(1))$}\\
{\small$d=64$ }
\end{cellvarwidth} & {\small$(-\frac{1}{6})^{[198]},\ (-\frac{1}{18})^{[495]},\ (\frac{1}{18})^{[1386]}$}\tabularnewline
\hline 
{\small EVII: $E_{7}/(E_{6}U(1))$, $d=54$ } & \begin{cellvarwidth}[t]
{\small$(-\frac{2}{9})^{[54]},\ (-\frac{1}{6})^{[78]},$}\\
{\small$0^{[650]},\ (\frac{1}{18})^{[702]}$}
\end{cellvarwidth}\tabularnewline
\hline 
\begin{cellvarwidth}[t]
{\small EVIII: $E_{8}/\operatorname{Spin}(16)$}\\
{\small$d=128$ }
\end{cellvarwidth} & {\small$(-\frac{1}{10})^{[1820]},\ (\frac{1}{30})^{[6435]}$}\tabularnewline
\hline 
{\small EIX: $E_{8}/(E_{7}Sp(1))$, $d=112$ } & {\small$(-\frac{1}{6})^{[399]},\ (-\frac{1}{30})^{[1539]},\ (\frac{1}{30})^{[4389]}$}\tabularnewline
\hline 
{\small FI: $F_{4}/(Sp(3)Sp(1))$, $d=28$ } & {\small$(-\frac{1}{6})^{[63]},\ (-\frac{1}{9})^{[90]},\ (\frac{1}{9})^{[252]}$}\tabularnewline
\hline 
{\small FII: $F_{4}/\operatorname{Spin}(9)$, $d=16$ } & {\small$(-\frac{5}{18})^{[9]},\ (\frac{1}{18})^{[126]}$}\tabularnewline
\hline 
{\small G: $G_{2}/SO(4)$, $d=8$ } & {\small$(-\frac{1}{4})^{[5]},\ (-\frac{1}{6})^{[9]},\ (\frac{1}{4})^{[21]}$}\tabularnewline
\hline 
\end{longtable}
\par
\endgroup
%\end{longtable}}{\small\endgroup}{\small\par}

{\small We give the representation data that recover every entry. Write
$V_{\lambda}$ for the complex irreducible module of highest weight
$\lambda$, using Bourbaki numbering, and set 
\[
q_{\mathfrak{k}}(\lambda)=\langle\lambda,\lambda+2\delta_{\mathfrak{k}}\rangle,
\]
where $\delta_{\mathfrak{k}}$ is half the sum of the positive roots
and long roots have squared length two. For the simple isotropy factors
appearing below, the Casimir contribution is $\frac{q_{\mathfrak{k}}(\lambda)}{2h^{\vee}_{\mathfrak{g}}}$,
with 
\[
h^{\vee}_{E_{6}}=12,\quad h^{\vee}_{E_{7}}=18,\quad h^{\vee}_{E_{8}}=30,\quad h^{\vee}_{F_{4}}=9,\quad h^{\vee}_{G_{2}}=4.
\]
The short-root factor in $G_{2}/SO(4)$ has an additional factor $\frac{1}{3}$;
the central contributions in the Hermitian cases are recorded separately.
Equation~\eqref{eq:appendix-casimir} converts a Casimir value $c$
into the curvature eigenvalue $\frac{c-1}{2}$.}{\small\par}

{\small\smallskip{}
\noindent }{\small\emph{The five spaces with two eigenvalues.}}{\small{}
For EI, EIV, EV, EVIII, and FII, the complex isotropy modules have
highest weights $\omega_{4}$ of $C_{4}$, $\omega_{4}$ of $F_{4}$,
$\omega_{4}$ of $A_{7}$, $\omega_{8}$ of $D_{8}$, and $\omega_{4}$
of $B_{4}$, respectively. Their symmetric squares have one trivial
summand, the metric, and the following two nontrivial summands: 
\[
{
\global\long\def\arraystretch{1.5}%
\begin{array}{|c|c|c|}
\hline \text{Type} & \text{highest weights in }S^{2}_{0}\mathfrak{p}_{\mathbb{C}} & (\dim V_{\lambda},q_{\mathfrak{k}}(\lambda))\\[3pt]
\hline \mathrm{EI} & 2\omega_{2},\ 2\omega_{4} & (308,18),\ (594,28)\\[3pt]
\hline \mathrm{EIV} & \omega_{4},\ 2\omega_{4} & (26,12),\ (324,26)\\[3pt]
\hline \mathrm{EV} & \omega_{2}+\omega_{6},\ 2\omega_{4} & (720,28),\ (1764,40)\\[3pt]
\hline \mathrm{EVIII} & \omega_{4},\ 2\omega_{8} & (1820,48),\ (6435,64)\\[3pt]
\hline \mathrm{FII} & \omega_{1},\ 2\omega_{4} & (9,8),\ (126,20)\\[3pt]
\hline \end{array}}
\]
The dimensions follow from \eqref{eq:appendix-highest-weights}. For
the spin representations this is particularly explicit: 
\begin{align*}
S^{2}\Delta^{+}_{16} & =\mathbb{C}\oplus\sideset{}{^{4}}\bigwedge\mathbb{C}^{16}\oplus\sideset{}{_{+}^{8}}\bigwedge\mathbb{C}^{16},\\
S^{2}\Delta_{9} & =\mathbb{C}\oplus\mathbb{C}^{9}\oplus\sideset{}{^{4}}\bigwedge\mathbb{C}^{9}.
\end{align*}
Thus $1820=\binom{16}{4}$, $6435=\frac{1}{2}\binom{16}{8}$, and
$126=\binom{9}{4}$. Dividing the displayed Casimir values by $2h^{\vee}_{\mathfrak{g}}$
and applying \eqref{eq:appendix-casimir} gives the five asserted
spectra.}{\small\par}

{\small\smallskip{}
\noindent }{\small\emph{The four quaternionic spaces.}}{\small{} For
EII, EVI, EIX, and FI, write $\mathfrak{p}_{\mathbb{C}}=U\otimes H$,
where $H=\mathbb{C}^{2}$ is the standard $Sp(1)$ module and $U$
is, respectively, $V^{A_{5}}_{\omega_{3}}$, $V^{D_{6}}_{\omega_{6}}$,
$V^{E_{7}}_{\omega_{7}}$, and $V^{C_{3}}_{\omega_{3}}$. Their dimensions
are $20,32,56,14$, and they are symplectic. In the decompositions
\[
S^{2}U=\mathcal{A}\oplus\mathcal{B},\quad\sideset{}{^{2}}\bigwedge U=\mathbb{C}\oplus\mathcal{D},
\]
$\mathcal{A}$ is the adjoint module. The required dimensions and
Casimir values are 
\[
{
\global\long\def\arraystretch{1.5}%
\begin{array}{|c|c|c|c|}
\hline \text{Type} & (\dim\mathcal{A},q(\mathcal{A})) & (\dim\mathcal{B},q(\mathcal{B})) & (\dim\mathcal{D},q(\mathcal{D}))\\[3pt]
\hline \mathrm{EII} & (35,12) & (175,24) & (189,20)\\[3pt]
\hline \mathrm{EVI} & (66,20) & (462,36) & (495,32)\\[3pt]
\hline \mathrm{EIX} & (133,36) & (1463,60) & (1539,56)\\[3pt]
\hline \mathrm{FI} & (21,8) & (84,18) & (90,14)\\[3pt]
\hline \end{array}}
\]
For reproducibility, the highest weights of $(\mathcal{A},\mathcal{B},\mathcal{D})$
in these four rows are 
\[
{
\global\long\def\arraystretch{1.5}%
\begin{array}{|c|c|c|c|}
\hline \text{Type} & \mathcal{A} & \mathcal{B} & \mathcal{D}\\[3pt]
\hline \mathrm{EII} & \omega_{1}+\omega_{5} & 2\omega_{3} & \omega_{2}+\omega_{4}\\[3pt]
\hline \mathrm{EVI} & \omega_{2} & 2\omega_{6} & \omega_{4}\\[3pt]
\hline \mathrm{EIX} & \omega_{1} & 2\omega_{7} & \omega_{6}\\[3pt]
\hline \mathrm{FI} & 2\omega_{1} & 2\omega_{3} & 2\omega_{2}.\\[3pt]
\hline \end{array}}
\]
Using $\bigwedge^{2}H=\mathbb{C}$ and deleting the metric gives 
\[
S^{2}_{0}\mathfrak{p}_{\mathbb{C}}=(\mathcal{A}\otimes S^{2}H)\oplus(\mathcal{B}\otimes S^{2}H)\oplus\mathcal{D}.
\]
Since $\dim S^{2}H=3$ and $q_{A_{1}}(2\omega_{1})=4$, the three
Casimir values are 
\[
\frac{q(\mathcal{A})+4}{2h^{\vee}_{\mathfrak{g}}},\quad\frac{q(\mathcal{B})+4}{2h^{\vee}_{\mathfrak{g}}},\quad\frac{q(\mathcal{D})}{2h^{\vee}_{\mathfrak{g}}},
\]
with multiplicities $3\dim\mathcal{A}$, $3\dim\mathcal{B}$, and
$\dim\mathcal{D}$. This gives each of the four three-eigenvalue rows.}{\small\par}

{\small\smallskip{}
\noindent }{\small\emph{The two Hermitian spaces.}}{\small{} Subscripts
now denote central charges. For EIII, $\mathfrak{p}_{\mathbb{C}}=\Delta^{+}_{1}\oplus\Delta^{-}_{-1}$,
where the half-spin modules have complex dimension sixteen. The Casimir
value on a $D_{5}$ module of charge $q$ is 
\[
c(\lambda,q)=\frac{q_{D_{5}}(\lambda)}{24}+\frac{q^{2}}{32}.
\]
The central coefficient is fixed by the tangent Casimir $\frac{1}{2}$,
since $q_{D_{5}}(\omega_{5})=\frac{45}{4}$. The decompositions 
\begin{align*}
\Delta^{+}\otimes\Delta^{-} & =\mathbb{C}\oplus V_{\omega_{2}}\oplus V_{\omega_{4}+\omega_{5}},\\
S^{2}\Delta^{+} & =V_{\omega_{1}}\oplus V_{2\omega_{5}}
\end{align*}
and their conjugates give, after removing the metric, 
\[
{
\global\long\def\arraystretch{1.5}%
\begin{array}{|c|r|c|}
\hline \text{summand(s)} & \text{total dimension} & c\\[3pt]
\hline (V_{\omega_{1}})_{2}\oplus(V_{\omega_{1}})_{-2} & 20 & \frac{1}{2}\\[3pt]
\hline (V_{\omega_{2}})_{0} & 45 & \frac{2}{3}\\[3pt]
\hline (V_{\omega_{4}+\omega_{5}})_{0} & 210 & 1\\[3pt]
\hline (V_{2\omega_{5}})_{2}\oplus(V_{2\omega_{4}})_{-2} & 252 & \frac{7}{6}\\[3pt]
\hline \end{array}}
\]
Here the unnormalized $D_{5}$ Casimir values are $9,16,24,25$, respectively.}{\small\par}

{\small For EVII, take $U=V^{E_{6}}_{\omega_{1}}$, of dimension twenty-seven.
Then $\mathfrak{p}_{\mathbb{C}}=U_{1}\oplus U^{*}_{-1}$ and 
\[
c(\lambda,q)=\frac{q_{E_{6}}(\lambda)}{36}+\frac{q^{2}}{54}.
\]
Again the tangent Casimir is $\frac{1}{2}$, because $q_{E_{6}}(\omega_{1})=\frac{52}{3}$.
Now 
\begin{align*}
U\otimes U^{*} & =\mathbb{C}\oplus V_{\omega_{2}}\oplus V_{\omega_{1}+\omega_{6}},\\
S^{2}U & =V_{\omega_{6}}\oplus V_{2\omega_{1}}.
\end{align*}
The nontrivial summands are 
\[
{
\global\long\def\arraystretch{1.5}%
\begin{array}{|c|r|c|}
\hline \text{summand(s)} & \text{total dimension} & c\\[3pt]
\hline (V_{\omega_{6}})_{2}\oplus(V_{\omega_{1}})_{-2} & 54 & \frac{5}{9}\\[3pt]
\hline (V_{\omega_{2}})_{0} & 78 & \frac{2}{3}\\[3pt]
\hline (V_{\omega_{1}+\omega_{6}})_{0} & 650 & 1\\[3pt]
\hline (V_{2\omega_{1}})_{2}\oplus(V_{2\omega_{6}})_{-2} & 702 & \frac{10}{9}.\\[3pt]
\hline \end{array}}
\]
Their unnormalized $E_{6}$ Casimir values are $\frac{52}{3},24,36,\frac{112}{3}$.
In both Hermitian cases, each conjugate pair is the complexification
of one real summand. Its total complex dimension is therefore the
required real multiplicity, so both must be counted. The zero-charge
trivial summand is the metric and is removed exactly once.}{\small\par}

{\small\smallskip{}
\noindent }{\small\emph{The space $G_{2}/SO(4)$.}}{\small{} Order
the two $A_{1}$ factors as short-root and long-root, and let $V_{a}$
have highest weight $a\omega_{1}$, hence dimension $a+1$. The isotropy
module and its trace-free symmetric square are 
\begin{align*}
\mathfrak{p}_{\mathbb{C}} & =V_{3}\otimes V_{1},\\
S^{2}_{0}\mathfrak{p}_{\mathbb{C}} & =(V_{4}\otimes V_{0})\oplus(V_{2}\otimes V_{2})\oplus(V_{6}\otimes V_{2}).
\end{align*}
These dimensions are $5,9,21$. Restriction of the ambient Killing
form gives 
\[
c(a,b)=\frac{1}{8}\left(\frac{a(a+2)}{6}+\frac{b(b+2)}{2}\right).
\]
The Casimir values are $\frac{1}{2}$, $\frac{2}{3}$, and $\frac{3}{2}$.
Applying \eqref{eq:appendix-casimir} yields the last row of the table.
The factor $\frac{1}{3}$ on the short-root factor is essential to
this normalization.}{\small\par}

{\small In all twelve rows, summing the real multiplicities gives $\frac{(d-1)(d+2)}{2}$,
and summing the eigenvalues with multiplicity gives $\frac{d+2}{4}$,
as required by \eqref{eq:appendix-checks}. These provide independent
checks of the metric-line removal, the real multiplicities, and the
curvature scale.}{\small\par}

\subsection{Einstein products of spheres and complex projective spaces}

{\small\label{subsec:appendix-products-projective}}{\small\par}

{\small We give direct calculations for the models appearing in Theorem~\ref{thm:symmetric-classification}.
All metrics in this subsection are normalized by $\Ric=\frac{1}{2}g$.
An exponent in square brackets denotes real multiplicity.}{\small\par}

{\small\smallskip{}
\noindent }{\small\emph{The product decomposition.}}{\small{} Let $M=M_{1}\times\cdots\times M_{r}$
be a product of Einstein manifolds, each with Einstein constant $\frac{1}{2}$.
At a point write $V=V_{1}\oplus\cdots\oplus V_{r}$, where $d_{\alpha}=\dim V_{\alpha}$
and $n=\sum_{\alpha}d_{\alpha}$. Extend the factor metrics $g_{\alpha}$
by zero on the other factors. There is an orthogonal invariant decomposition
\[
S^{2}_{0}(V)=\left\{ \sum_{\alpha}t_{\alpha}g_{\alpha}:\sum_{\alpha}d_{\alpha}t_{\alpha}=0\right\} \oplus\bigoplus_{\alpha}S^{2}_{0}(V_{\alpha})\oplus\bigoplus_{\alpha<\beta}(V_{\alpha}\odot V_{\beta}).
\]
The first summand consists of trace-free linear combinations of the
factor metrics and has dimension $r-1$. Since $\widehat{R}(g_{\alpha})=-\Ric_{M_{\alpha}}=-\frac{1}{2}g_{\alpha}$,
the COSK is $-\frac{1}{2}\Id$ on this summand. On $S^{2}_{0}(V_{\alpha})$
it is the COSK of the $\alpha$th factor. On each mixed summand $V_{\alpha}\odot V_{\beta}$
it vanishes: a nonzero curvature component of the product has all
four indices in one factor, whereas the two indices of a mixed tensor
lie in different factors. Consequently, 
\begin{equation}
\operatorname{Spec}(\mathring{R})=\left\{ \left(-\frac{1}{2}\right)^{[r-1]},\,0^{[\sum_{\alpha<\beta}d_{\alpha}d_{\beta}]}\right\} \cup\bigcup^{r}_{\alpha=1}\operatorname{Spec}(\mathring{R}_{M_{\alpha}}),\label{eq:appendix-general-product-spectrum}
\end{equation}
where the union retains multiplicities. This formula applies to all
Einstein products with the common normalization, not only to spheres.}{\small\par}

{\small\smallskip{}
\noindent }{\small\emph{Products of round spheres.}}{\small{} Take
\[
M=\prod^{r}_{\alpha=1}S^{d_{\alpha}}\left(\frac{1}{2(d_{\alpha}-1)}\right),\quad d_{\alpha}\geq2.
\]
Here $S^{d}(k)$ denotes the round sphere of sectional curvature $k$.
A curvature tensor of constant sectional curvature $k$ satisfies
$\widehat{R}(H)=k(H-(\tr H)\Id)$, so its restriction to $S^{2}_{0}$
is $k\Id$. Formula \eqref{eq:appendix-general-product-spectrum}
therefore gives the complete spectrum 
\begin{align*}
 & \left(-\frac{1}{2}\right)^{[r-1]},\quad0^{[\sum_{\alpha<\beta}d_{\alpha}d_{\beta}]},\\
 & \left(\frac{1}{2(d_{\alpha}-1)}\right)^{[\frac{(d_{\alpha}-1)(d_{\alpha}+2)}{2}]},\quad1\leq\alpha\leq r.
\end{align*}
The multiplicities sum to $\frac{(n-1)(n+2)}{2}$. The weighted sum
of the eigenvalues is 
\[
-\frac{r-1}{2}+\sum^{r}_{\alpha=1}\frac{d_{\alpha}+2}{4}=\frac{n+2}{4},
\]
as required by $s=\frac{n}{2}$ and \eqref{eq:second-kind-trace}.}{\small\par}

{\small The shifted endpoint in Section~\ref{sec:symmetric-classification}
is particularly transparent in this spectrum. Since $\rho=\frac{1}{2}$,
\[
\mathring{W}+\beta_{n}\Id=\mathring{R}+\frac{1}{n+2}\Id.
\]
Suppose $r\geq2$ and put $k=\frac{n+2}{2}$. The first $r-1$ eigenvalues
of the shifted operator are $-\frac{1}{2}+\frac{1}{n+2}$; the next
$\sum_{\alpha<\beta}d_{\alpha}d_{\beta}$ are $\frac{1}{n+2}$. There
are enough of these to fill the first $k$ eigenvalues: $n\geq2r$
gives $k>r-1$, and 
\[
\sum_{\alpha<\beta}d_{\alpha}d_{\beta}=\frac{1}{2}\sum_{\alpha}d_{\alpha}(n-d_{\alpha})\geq n\geq k-(r-1).
\]
Thus, with the same interpretation for fractional $k$, 
\[
\Sigma_{\frac{n+2}{2}}(\mathring{W}+\beta_{n}\Id)=-\frac{r-1}{2}+\frac{k}{n+2}=\frac{2-r}{2}.
\]
In particular, every Einstein product of two round spheres lies exactly
on the boundary, while products of three or more factors lie outside
the cone. For one round sphere the shifted sum is $\frac{3n}{4(n-1)}>0$,
in agreement with Theorem~\ref{thm:symmetric-classification}.}{\small\par}

{\small\smallskip{}
\noindent }{\small\emph{Complex projective space.}}{\small{} Let $\mathbb{CP}^{m}$
carry its Fubini-Study metric, and let $J$ be the orthogonal complex
structure on a tangent space $V$ of real dimension $2m$. With holomorphic
sectional curvature $4c$, its curvature tensor in our convention
is 
\begin{align}
R(X,Y,Z,T)=c\bigl( & \langle X,Z\rangle\langle Y,T\rangle-\langle X,T\rangle\langle Y,Z\rangle\nonumber \\
 & +\langle JX,Z\rangle\langle JY,T\rangle-\langle JX,T\rangle\langle JY,Z\rangle\nonumber \\
 & +2\langle JX,Y\rangle\langle JZ,T\rangle\bigr).\label{eq:appendix-CP-curvature}
\end{align}
Its Ricci tensor is $2(m+1)c\,g$, so the normalization $\Ric=\frac{1}{2}g$
means 
\begin{equation}
c=\frac{1}{4(m+1)}.\label{eq:appendix-CP-scale}
\end{equation}
For a symmetric endomorphism $H$, contraction of \eqref{eq:appendix-CP-curvature}
gives 
\begin{equation}
\widehat{R}(H)=c\bigl(H-(\tr H)\Id+3JHJ\bigr).\label{eq:appendix-CP-contraction}
\end{equation}
To see the last term, the contraction of $\langle JX,Z\rangle\langle JY,T\rangle$
contributes $JHJ$; the contraction involving the trace of $JH$ vanishes
because $J$ is skew-adjoint and $H$ is symmetric; and the last curvature
term contributes $2JHJ$. In particular, $\widehat{R}(\Id)=-2(m+1)c\Id=-\frac{1}{2}\Id$,
in agreement with $\widehat{R}(\Id)=-\Ric$.}{\small\par}

{\small The trace-free symmetric endomorphisms split orthogonally as
\[
S^{2}_{0}(V)=\{H:HJ=JH,\ \tr H=0\}\oplus\{H:HJ=-JH\}.
\]
Orthogonality follows from the orthogonal involution $H\mapsto-JHJ$.
The first summand is the space of complex Hermitian trace-free matrices
and has real dimension $m^{2}-1$. In the second summand $H$ is represented
by a complex symmetric matrix acting antilinearly; its real dimension
is $2\dim_{\mathbb{C}}S^{2}\mathbb{C}^{m}=m(m+1)$. If $HJ=JH$, then
$JHJ=-H$, whereas if $HJ=-JH$, then $JHJ=H$. Equations~\eqref{eq:appendix-CP-scale}
and \eqref{eq:appendix-CP-contraction} now give 
\begin{equation}
\operatorname{Spec}(\mathring{R}_{\mathbb{CP}^{m}})=\left\{ \left(-\frac{1}{2(m+1)}\right)^{[m^{2}-1]},\quad\left(\frac{1}{m+1}\right)^{[m(m+1)]}\right\} .\label{eq:appendix-CP-spectrum}
\end{equation}
The multiplicities sum to $\frac{(2m-1)(2m+2)}{2}$, and the trace
is 
\[
-\frac{m^{2}-1}{2(m+1)}+\frac{m(m+1)}{m+1}=\frac{m+1}{2}=\frac{n+2}{4},\quad n=2m.
\]
}{\small\par}

{\small For the shifted operator, adding $\frac{1}{n+2}=\frac{1}{2(m+1)}$
gives 
\[
\operatorname{Spec}(\mathring{W}+\beta_{n}\Id)=\left\{ 0^{[m^{2}-1]},\quad\left(\frac{3}{2(m+1)}\right)^{[m(m+1)]}\right\} .
\]
For $m\geq2$ there are at least $m+1=\frac{n+2}{2}$ zero eigenvalues,
so 
\[
\Sigma_{\frac{n+2}{2}}(\mathring{W}+\beta_{n}\Id)=0.
\]
When $m=1$, the first multiplicity is zero and \eqref{eq:appendix-CP-spectrum}
reduces to the eigenvalue $\frac{1}{2}$ with multiplicity two. This
is precisely the round two-sphere with Einstein constant $\frac{1}{2}$.}{\small\par}

{\small Finally, \eqref{eq:appendix-general-product-spectrum} also
gives all products of complex projective spaces, or mixed products
of spheres and complex projective spaces. For example, for $\prod^{r}_{\alpha=1}\mathbb{CP}^{m_{\alpha}}$
it gives 
\begin{align*}
 & \left(-\frac{1}{2}\right)^{[r-1]},\quad0^{[4\sum_{\alpha<\beta}m_{\alpha}m_{\beta}]},\\
 & \left(-\frac{1}{2(m_{\alpha}+1)}\right)^{[m^{2}_{\alpha}-1]},\quad\left(\frac{1}{m_{\alpha}+1}\right)^{[m_{\alpha}(m_{\alpha}+1)]},\quad1\leq\alpha\leq r,
\end{align*}
where the last two entries are included for each $\alpha$ and coincident
eigenvalues are combined.}{\small\par}

\bibliographystyle{alpha}
\bibliography{references}

\end{document}